\DeclareSymbolFont{bbold}{U}{bbold}{m}{n}
\DeclareSymbolFontAlphabet{\mathbbold}{bbold}
\DeclareFontFamily{U}{mathx}{\hyphenchar\font45}
\DeclareFontShape{U}{mathx}{m}{n}{
      <5> <6> <7> <8> <9> <10>
      <10.95> <12> <14.4> <17.28> <20.74> <24.88>
      mathx10
      }{}
\DeclareSymbolFont{mathx}{U}{mathx}{m}{n}
\newcommand{\bigdoublevee}{\big@doubleop{\bigvee}}
\newcommand{\bigdoublewedge}{\big@doubleop{\bigwedge}}
\newcommand{\big@doubleop}[1]{%
  \DOTSB\mathop{\mathpalette\big@doubleop@aux{#1}}\slimits@
}
\newcommand\big@doubleop@aux[2]{%
  \sbox\z@{$\m@th#1#2$}%
  \makebox[1.35\wd\z@][s]{$\m@th#1#2\hss#2$}%
}
\newcommand{\dotminus}{\mathbin{\text{\@dotminus}}}
\newcommand{\@dotminus}{%
  \ooalign{\hidewidth\raise1ex\hbox{.}\hidewidth\cr$\m@th-$\cr}%
}
\declaretheorem[parent=section]{theorem}
\declaretheorem[sibling=theorem]{lemma}
\declaretheorem[sibling=theorem]{corollary}
\declaretheorem[sibling=theorem, style=definition]{definition}
\declaretheorem[sibling=theorem]{fact}
\declaretheorem[sibling=theorem]{proposition}
\declaretheorem[sibling=theorem]{question}
\declaretheorem[sibling=theorem, style=definition]{examples}
\newcommand{\mc}[1]{\mathcal{#1}}
\newcommand{\mb}[1]{\mathbb{#1}}
\newcommand{\vk}[1]{\overline{#1}}
\DeclareMathOperator{\st}{\colon\,}
\DeclareMathOperator{\sgn}{sgn}
\newcommand{\norm}[1]{\left\lVert#1\right\rVert}
\renewcommand{\epsilon}{\varepsilon}
\DeclareMathOperator{\card}{card}
\newcommand{\conj}[1]{\overline{#1}}
\DeclareMathOperator{\pipeseparator}{|}
\NewDocumentCommand{\cgen}{omm}{%
  \IfNoValueTF{#1}
  {\mathrm{C}_{\mathrm{gen}}^{\mbox{$*$}} (#3;#2)}
  {\mathrm{C}_{\mathrm{gen}}^{\mbox{$*$}}\mathopen{#1(}#3;#2\mathclose{#1)}}
}%
\NewDocumentCommand{\kcgen}{om}{%
  \IfNoValueTF{#1}
  {\cgen{\mb{K}}{#2}}
  {\cgen[#1]{\mb{K}}{#2}}
}%
\NewDocumentCommand{\ugcsa}{omm}{%
  \IfNoValueTF{#1}
  {\mathrm{C}_{\mathrm{uni}}^{\mbox{$*$}} (#3;#2)}
  {\mathrm{C}_{\mathrm{uni}}^{\mbox{$*$}}\mathopen{#1(}#3;#2\mathclose{#1)}}
}%
\NewDocumentCommand{\rgcsa}{omm}{%
  \IfNoValueTF{#1}
  {\mathrm{C}^{\mbox{$*$}}_{\mathrm{red}} (#3;#2)}
  {\mathrm{C}^{\mbox{$*$}}_{\mathrm{red}}\mathopen{#1(}#3;#2\mathclose{#1)}}
}%
\NewDocumentCommand{\urep}{ommm}{%
  \IfNoValueTF{#1}
  {\mathrm{C}^{\mbox{$*$}} \langle#3\pipeseparator#4;#2\rangle}
  {\mathrm{C}^{\mbox{$*$}}_{#2}\mathopen{#1\langle}#3\pipeseparator#4\mathclose{#1\rangle}}
}%
\NewDocumentCommand{\gurep}{omm}{%
  \IfNoValueTF{#1}
  {\mathrm{Grp}\langle#2\pipeseparator#3\rangle}
  {\mathrm{Grp}\mathopen{#1\langle}#2\pipeseparator#3\mathclose{#1\rangle}}
}%
\NewDocumentCommand{\blo}{omm}{%
  \IfNoValueTF{#1}
  {B(#3;#2)}
  {B\mathopen{#1(}#3;#2\mathclose{#1)}}
}%
\NewDocumentCommand{\kblo}{om}{%
  \IfNoValueTF{#1}
  {\blo{\mb{K}}{#2}}
  {\blo[#1]{\mb{K}}{#2}}
}%
\DeclareMathOperator{\vspanop}{span}
\NewDocumentCommand{\vspan}{omm}{%
  \IfNoValueTF{#1}
  {\vspanop_{#2}(#3)}
  {\vspanop_{#2}\mathopen{#1(}#3\mathclose{#1)}}
}%
\NewDocumentCommand{\cts}{omm}{%
  \IfNoValueTF{#1}
  {C(#3;#2)}
  {C\mathopen{#1(}#3;#2\mathclose{#1)}}
}%
\NewDocumentCommand{\ctsvai}{omm}{%
  \IfNoValueTF{#1}
  {C_0(#3;#2)}
  {C_0\mathopen{#1(}#3;#2\mathclose{#1)}}
}%
\NewDocumentCommand{\ctsconj}{omm}{%
  \IfNoValueTF{#1}
  {C(#3, #2)}
  {C\mathopen{#1(}#3,#2\mathclose{#1)}}
}%
\NewDocumentCommand{\ctsconjvai}{omm}{%
  \IfNoValueTF{#1}
  {C_0(#3, #2)}
  {C_0\mathopen{#1(}#3,#2\mathclose{#1)}}
}%
\newcommand{\contr}[2]{\mathcal{F}(#2;#1)}
\DeclareMathOperator{\Iden}{Iden}
\newcommand{\inlinedef}[1]{\textbf{#1}}
\newcommand{\mTitle}{Computable presentations of C*-algebras}
\newcommand{\mAuthorName}{Alec Fox}
\begin{document}

\begin{center}
\textbf{\mTitle}

{\mAuthorName}
\end{center}

\begin{abstract}
We initiate the study of computable presentations of real and complex C*-algebras under the program of effective metric structure theory.
  With the group situation as a model, we develop corresponding notions of recursive presentations and word problems for C*-algebras, and show some analogous results hold in this setting.
  Famously, every finitely generated group with a computable presentation is computably categorical, but we provide a counterexample in the case of C*-algebras.
  On the other hand, we show every finite-dimensional C*-algebra is computably categorical.
\end{abstract}

\section{Introduction}
With his 1882 paper~\cite{Dyck_GruppentheoretischeStudien_1882}, Dyck began the study of presentations of groups in terms of generators and relations, and in doing so laid the foundation for what would become the field of combinatorial group theory.
By the mid-twentieth century, mathematical logic had been incorporated into the field to incredible success.
A success that was, perhaps, most exemplified by the independently proven result of Boone~\cite{Boone_WordProblem_1958} and Novikov~\cite{Novikov_AlgorithmicUnsolvability_1955} of the existence of finitely presented groups with unsolvable word problem.
In that same era, defining work by Fr\"ohlich and Shepherdson in effective field theory~\cite{FrohlichShepherdson_EffectiveProcedures_1956} and by Mal'tsev in effective algebra~\cite{Maltsev_ConstructiveAlgebras_1961}\cite{Maltsev_RecursiveAbelian_1962} established what would later be known as computable structure theory, i.e., the study of the relationship between computability theoretic complexity and mathematical structures.
Although implicit in the 1989 text~\cite{Pour-elRichards_ComputabilityAnalysis_2017} by Pour-El and Richards, it is only within the last decade that a program has emerged to extend computable structure theory to the uncountable structures one might encounter in analysis.
Now known as effective metric structure theory, the program truly began with the work of Melkinov and Nies~\cite{MelnikovNies_ClassificationProblem_2013} on the classification of compact metric spaces and the work of Melnikov~\cite{Melnikov_ComputablyIsometric_2013} on the categoricity of various metric spaces.
Our goal is to apply perspectives and techniques from both combinatorial group theory and effective metric structure theory to C*-algebras.

Complex C*-algebras have become a fixture of modern mathematics, and while real C*-algebras have not received the same attention, they represent a natural class of objects for consideration from the viewpoint of computability theory.
Importantly, the classes of real and complex C*-algebras share similarities with the class of groups.
In particular, C*-algebras can be studied by their presentations in terms of generators and relations, C*-algebras are principally determined by their algebraic structure, and the universal contraction C*-algebras, while not truly free objects, can fulfill some of the same roles as free groups.
Furthermore, any discrete group has corresponding universal and reduced group C*-algebras, so theorems for groups can often serve as a bound on theorems for C*-algebras.

Over the last decade, effective metric structure theory has been developed in the context of metric spaces~\cite{FranklinMcnicholl_DegreesLowness_2020}~\cite{GreenbergEtal_UniformProcedures_2018}, Polish spaces~\cite{BazhenovEtal_ComputableStone_2021}~\cite{Harrison-trainorEtal_ComputabilityPolish_2020}~\cite{HoyrupEtal_DegreeSpectra_2020a}, $\ell^p$ spaces~\cite{Mcnicholl_NoteComputable_2015}~\cite{Mcnicholl_ComputableCopies_2017}~\cite{McnichollStull_IsometryDegree_2019} and $L^p$ spaces~\cite{BrownMcnicholl_AnalyticComputable_2020}~\cite{BrownEtal_ComplexityClassifying_2020}~\cite{ClaninEtal_AnalyticComputable_2019}~\cite{Mcnicholl_ComputingExponent_2020}.
This paper fills a remaining hole by initiating the study of real and complex C*-algebras under the program of effective metric structure theory.

As combinatorial group theory and computable structure theory introduce conflicting terminology, we follow~\cite{Melnikov_ComputableAbelian_2014} in referring to presentations of groups in terms of c.e.\ generators and c.e.\ relations as c.e.\ presentations instead of recursive presentations.
In \autoref{sec3}, we adapt c.e.\ presentations and word problems from groups to C*-algebras and develop the basic theory.
Specifically, we give characterizations for when c.e.\ presentations are actually computable in the sense of computable structure theory, and show that the computability theoretic properties of a presentation can be defined in terms of the word problem.
We also investigate the connection between computable presentations of groups and properties of the word problems of the corresponding universal and reduced group C*-algebras.
In \autoref{sec4}, we describe the relationship between real C*-algebras and their complexifications.
Consequently, we are able to transfer some already established results about $\cts{\mb{R}}{X}$ as a real Banach space or algebra to $\cts{\mb{C}}{X}$ as a complex C*-algebra. Of particular importance, using a result of Melnikov and Ng~\cite{MelnikovNg_ComputableStructures_2016}, we find that, as a complex C*-algebra, $\cts{\mb{C}}{[0,1]}$ is finitely generated and admits a computable presentation but is not computably categorical.
In \autoref{sec5}, we show, on the other hand, that every finite-dimensional C*-algebra is computably categorical.

\section{Preliminaries}
\subsection{C*-algebras}
Although, historically, real C*-algebras were rarely studied in their own right, interest in real C*-algebras has grown as it has become clear that the theory of real C*-algebras can not be subsumed under the theory of complex C*-algebras (see~\cite{MoslehianEtal_SimilaritiesDifferences_2022} for an overview of some differences) and as new applications of real C*-algebras have been found (see~\cite{Rosenberg_StructureApplications_2015} for an overview of some applications).
For our purposes, the framework of real C*-algebras also provides a crucial link between complex C*-algebras and previously established results for real Banach algebras.
In this section, we provide an introduction to real and complex C*-algebras.
More information can be found on real C*-algebras in~\cite{Li_RealOperator_2003} or~\cite{Schroder_KtheoryReal_1993}, and on complex C*-algebras in~\cite{Davidson_AlgebrasExample_1996} or~\cite{Murphy_AlgebrasOperator_2014}.

Throughout the paper, we let $\mb{K}$ denote the real numbers $\mb{R}$ or the complex numbers $\mb{C}$.

\begin{definition}
  A \inlinedef{C*-algebra} over $\mb{K}$ is a Banach *-algebra over $\mb{K}$ which is isometrically *-isomorphic to a norm-closed *-subalgebra of the set of bounded operators $\blo{\mb{K}}{\mc{H}}$ on a Hilbert space $\mc{H}$ over $\mb{K}$.
\end{definition}

We have the following abstract characterizations.
A complex Banach *-algebra $A$ is a complex C*-algebra if and only if it satisfies the C*-axiom $\norm{x}^{2} = \norm{x^{*}x}$ for all $x \in A$.
In the case of real C*-algebras, however, this is no longer enough.
A real Banach *-algebra $A$ is a real C*-algebra if and only if $\norm{x}^{2} \leq \norm{x^{*}x + y^{*}y}$ for all $x,y \in A$.

If $A$ is a unital complex C*-algebra and $x \in A$, then the \inlinedef{spectrum} $\sigma_A(x)$ of $x$ in $A$ is defined to be the set $\{\lambda \in \mb{C} \st \lambda 1 - x \text{ is not invertible} \}$, where $1$ is the unit in $A$.
For any complex C*-algebra $A$, there is a unique unital complex C*-algebra $\tilde{A}$, called the \inlinedef{unitization} of $A$, such that $A$ is an ideal of $\tilde{A}$ and $\tilde{A} / A \cong \mb{C}$.
If $A$ is a nonunital complex C*-algebra and $x \in A$, then the \inlinedef{spectrum} $\sigma_A(x)$ of $x$ in $A$ is defined to be $\sigma_{\tilde{A}}(x)$ where $\tilde{A}$ is the unitization of $A$.

The primary tool connecting real and complex C*-algebras is that of complexification.

\begin{definition}
  Given a real C*-algebra $A$, its \inlinedef{complexification} $A_{c}$ is the complex C*-algebra $A \otimes_{\mb{R}} \mb{C} = A + iA$ equipped with the natural complex *-algebra operations and the induced C*-norm.
\end{definition}
Note $A_{c}$ truly is a complexification of $A$.
Namely, the norm on $A_{c}$ extends the norm on $A$, and $\norm{x + iy} = \norm{x - iy}$ for all $x,y \in A$.
We also have the useful consequence that $\max(\norm{x}, \norm{y}) \leq \norm{x + iy}$ for all $x,y \in A$.

Through the complexification, we can define the spectrum of elements in a real C*-algebra.
If $A$ is a real C*-algebra and $x \in A$, then the \inlinedef{spectrum} $\sigma_{A}(x)$ of $x$ in $A$ is defined to be the spectrum $\sigma_{A_{c}}(x)$ of $x$ in $A_{c}$.
When $A$ is unital, we can further characterize the spectrum by noting that $a + ib \in \sigma_{A}(x)$ if and only if $(x - a)^{2} + b^{2}$ is not invertible in $A$, for $a,b \in \mb{R}$.

\begin{definition}
  Given a complex C*-algebra $A$, a \inlinedef{conjugation} on $A$ is a conjugate-linear map $\tau : A \to A$ such that $\tau(\tau(x)) = x$, $\tau(xy) = \tau(x)\tau(y)$, and $\tau(x^{*}) = \tau(x)^{*}$ for $x,y \in A$. \end{definition}
If $A$ is a real C*-algebra, then there is a natural conjugation $\tau$ on $A_{c}$ given by $\tau(x + iy) = x - iy$ for $x,y \in A$.
We can recover $A$ from $\tau$ as the set of fixed points of $\tau$, $\{z \in A_c \st \tau(z) = z\}$, with the induced operations and norm.
In the same way, every conjugation $\tau$ on a complex C*-algebra $A$ determines a real C*-algebra.

The natural maps between C*-algebras are those that preserve the *-algebraic structure, namely \inlinedef{*-homomorphisms}.
If $\psi : A \to B$ is a real *-homomorphism between real C*-algebras, then $\psi$ extends to a conjugation-preserving complex *-homomorphism $\psi_{c} : A_{c} \to B_{c}$ given by $\psi_{c}(x + iy) = \psi(x) + i\psi(y)$ for $x,y \in A$.
Furthermore, the kernel of $\psi_{c}$ is the complexification of the kernel of $\psi$.
Note any *-homomorphism between C*-algebras is necessarily norm-decreasing.
In particular, any *-isomorphism preserves the norm.

Let $A$ be a C*-algebra over $\mb{K}$.
We say an element $x \in A$ is \inlinedef{self-adjoint} if $x^{*} = x$, and \inlinedef{skew-adjoint} if $x^{*} = -x$.
Every $z \in A$ can be uniquely expressed as $x + y$ where $x$ is self-adjoint and $y$ is skew-adjoint, just let $x = \frac{1}{2}(z + z^{*})$ and $y = \frac{1}{2}(z - z^{*})$.
If $A$ is a complex C*-algebra, then $z$ can also be uniquely expressed as $a + ib$ where $a$ and $b$ are both self-adjoint, just let $a = \frac{1}{2}(z + z^{*})$ and $b = \frac{1}{2i}(z - z^{*})$.

In both the real and complex case, we have a complete classification of the finite-dimensional C*-algebras.

\begin{fact}\label{structure of fd ccsalgs}
  Every finite-dimensional complex C*-algebra is isomorphic to a direct sum of matrix algebras $\bigoplus_{i=1}^k M_{n_i}(\mb{C})$ for some
  $n_1,\ldots,n_k \in \mb{N}$.
\end{fact}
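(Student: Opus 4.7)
The plan is to proceed in two stages: first peel off the center to decompose $A$ as a direct sum of factors, and then show each factor is a full matrix algebra $M_n(\mb{C})$. The key underlying tool throughout is the continuous functional calculus for self-adjoint elements, together with the observation that in finite dimensions any self-adjoint element has finite spectrum and hence decomposes as a real linear combination of finitely many mutually orthogonal projections.

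For the first stage, I would begin by noting $A$ is unital (an approximate unit stabilizes in finite dimensions). The center $Z(A)$ is then a finite-dimensional commutative unital C*-algebra, hence by Gelfand duality isomorphic to $C(X)$ for a finite discrete space $X$, so $Z(A) \cong \mb{C}^k$ with minimal central projections $p_1,\ldots,p_k$ summing to $1$. Orthogonality and centrality of the $p_i$ give an internal direct sum decomposition $A = \bigoplus_{i=1}^k p_i A$, where each summand $B_i := p_i A$ is a finite-dimensional C*-algebra with trivial center.

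The main obstacle is stage two: showing each centerless finite-dimensional C*-algebra $B$ is isomorphic to some $M_n(\mb{C})$. My approach would be to select a minimal nonzero projection $e \in B$ (which exists by dimension) and observe that the corner $eBe$ has $e$ as its only nonzero projection, so by the spectral decomposition of self-adjoints combined with the self-adjoint/skew-adjoint decomposition $z = a + ib$ from the preliminaries, $eBe = \mb{C}e$. I would then inductively build a maximal family $e_1,\ldots,e_n$ of mutually orthogonal projections, each Murray--von Neumann equivalent to $e$ via partial isometries $v_i \in B$ with $v_i^{*}v_i = e_1$ and $v_iv_i^{*} = e_i$. Triviality of $Z(B)$ is exactly what forces $\sum e_i = 1$, since otherwise the central support of $e$ would be a nontrivial central projection. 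The products $E_{ij} := v_iv_j^{*}$ then form a system of $n^2$ matrix units in $B$, whose complex span is a copy of $M_n(\mb{C})$; the decomposition $b = \sum_{i,j} E_{ii}bE_{jj}$ together with $E_{ii}bE_{jj} \in E_{ii}BE_{ii} \cdot v_iv_j^{*} = \mb{C}E_{ij}$ shows this span is all of $B$.

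A cleaner alternative that bypasses much of stage two is to appeal directly to Artin--Wedderburn: every finite-dimensional C*-algebra is semisimple, since if $x$ lies in the Jacobson radical then so does the self-adjoint element $x^{*}x$ with zero spectrum, whence $\norm{x}^{2} = \norm{x^{*}x} = 0$ by the C*-identity; then Artin--Wedderburn writes $A$ as a direct sum of matrix algebras over finite-dimensional division algebras over $\mb{C}$, and algebraic closure of $\mb{C}$ forces each such division algebra to be $\mb{C}$ itself.
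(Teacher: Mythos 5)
Your argument is correct, but note the paper itself gives no proof of this statement: it is recorded as a background \emph{Fact}, with the standard C*-algebra references cited in the preliminaries, so there is no in-paper argument to compare against. Your main route (unitality, decomposition over the finite-dimensional center into factors via minimal central projections, then minimal projections, Murray--von Neumann equivalence, and matrix units $E_{ij}=v_iv_j^{*}$) is exactly the canonical textbook proof, and importantly it produces a genuine *-isomorphism: $E_{ij}^{*}=E_{ji}$, the central projections are self-adjoint, and the span argument $e_i b e_j \in \mb{C}E_{ij}$ is right. The one compressed step --- ``triviality of $Z(B)$ forces $\sum e_i = 1$'' --- deserves a line more: if $q=1-\sum e_i\neq 0$, maximality plus $eBe=\mb{C}e$ and polar decomposition show $qBe=0$, and then the unit of the ideal $BeB$ is a central projection orthogonal to $q$, contradicting triviality of the center; as written, the appeal to the central support is the right idea but hides the maximality-versus-$qBe\neq 0$ dichotomy.

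One caveat on the ``cleaner alternative'': semisimplicity via $\norm{x}^{2}=\norm{x^{*}x}=0$ for $x$ in the radical is fine, but Artin--Wedderburn only yields an isomorphism of complex algebras, whereas the Fact (as it is used later, e.g.\ to identify presentations and reduce computable categoricity to matrix algebras) needs a *-isomorphism, which is automatically isometric. To repair the shortcut one must add that the minimal central idempotents are projections (idempotents in a commutative C*-algebra are self-adjoint), and then still identify each simple block with $M_{n}(\mb{C})$ \emph{as a C*-algebra}, which in effect brings back the matrix-unit argument of your main proof. So the first argument is the one to keep; the Artin--Wedderburn route is not quite self-contained for the statement as intended.
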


We denote the ring of quaternions by $\mb{H}$.

\begin{fact}\label{structure of fd rcsalgs}
  Every finite-dimensional real C*-algebra is isomorphic to a direct sum of matrix algebras $\bigoplus_{i=1}^k M_{n_i}(\mb{D}_i)$ for some $n_1,\ldots,n_k \in \mb{N}$, where each $\mb{D}_i$ is $\mb{R}$, $\mb{C}$, or $\mb{H}$.
\end{fact}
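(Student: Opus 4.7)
The plan is to bootstrap from the complex classification using complexification. Let $A$ be a finite-dimensional real C*-algebra and consider $A_c$ with its natural conjugation $\tau$. By Fact~\ref{structure of fd ccsalgs}, $A_c \cong \bigoplus_{j=1}^{\ell} M_{m_j}(\mb{C})$, and $A$ is recovered as the real C*-subalgebra of $\tau$-fixed points. Since $\tau$ is a conjugate-linear *-automorphism with $\tau^2 = \id$, it permutes the minimal central projections of $A_c$ as an involution, partitioning the matrix summands into orbits of size $1$ or $2$. The fixed-point algebra decomposes accordingly, so it suffices to identify the contribution of each orbit.

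For an orbit of size $2$ swapping two summands $M_m(\mb{C}) \oplus M_m(\mb{C})$, after an inner adjustment that intertwines $\tau$ with the twisted swap $(x,y) \mapsto (\bar y, \bar x)$, the fixed points are $\{(x, \bar x) \st x \in M_m(\mb{C})\} \cong M_m(\mb{C})$, viewed as a real C*-algebra under the operator norm. For an orbit of size $1$ stabilizing a summand $M_m(\mb{C})$, the restriction of $\tau$ is a conjugation on $M_m(\mb{C})$. Applying Skolem--Noether to the complex *-algebra automorphism $x \mapsto \tau(\bar x)$ gives $\tau(x) = u\bar x u^{-1}$ for some invertible $u$; compatibility of $\tau$ with the adjoint forces $u^{*}u$ to be central, so after rescaling we may take $u$ unitary. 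The relation $\tau^2 = \id$ then forces $u\bar u = cI$ with $c = \pm 1$. If $c = 1$, then $u$ is symmetric unitary, a Takagi factorization $u = vv^T$ lets us conjugate $\tau$ to entry-wise complex conjugation, and the fixed points form $M_m(\mb{R})$. If $c = -1$, then $u$ is antisymmetric unitary (forcing $m$ even), and after conjugating $u$ to the standard symplectic form $J$, the fixed points are exactly the complex matrices commuting with left multiplication by the quaternionic unit, giving $M_{m/2}(\mb{H})$.

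Summing over all orbits yields the desired decomposition $A \cong \bigoplus_i M_{n_i}(\mb{D}_i)$ with each $\mb{D}_i \in \{\mb{R}, \mb{C}, \mb{H}\}$. The main obstacle is the size-$1$ orbit analysis, which is essentially the classification of real forms of the complex matrix algebra; the computation is classical but not instantaneous. Everything else --- assembling the orbits, verifying that the fixed-point involution and norm match the standard C*-structure on each matrix summand, and reading off the index data --- follows functorially from the complexification construction. An alternative route via Wedderburn--Artin for real semisimple algebras together with Frobenius's theorem on real division algebras produces a real algebra isomorphism directly, but one then still needs to verify compatibility with the C*-structure on each matrix block, so it would require comparable work.
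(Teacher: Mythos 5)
Your argument is correct, but note that the paper itself offers no proof of this statement: it is recorded as a classical \emph{fact}, with the structure theory of real C*-algebras deferred to the references cited at the start of Section 2 (e.g.\ Li, Schr\"oder), so there is no in-paper argument to compare against. Your route --- complexify, invoke \autoref{structure of fd ccsalgs}, and analyze the canonical conjugation $\tau$ orbitwise on the minimal central projections of $A_c$ --- is the standard complexification proof and is very much in the spirit of the paper, which emphasizes conjugations throughout Section 4 and whose \autoref{SN_cor} is essentially the Skolem--Noether input you need for the size-one orbits. The sketch is sound: for a size-two orbit the fixed points $\{(x,\sigma^{-1}(x))\}$ are carried onto $M_m(\mb{C})$ by the first coordinate, and this is isometric because injective *-homomorphisms of C*-algebras are isometric (in fact no ``inner adjustment'' is even needed there); for a size-one orbit, writing $\tau(x)=u\bar{x}u^{*}$ with $u$ unitary, the relation $\tau^2=\mathrm{id}$ gives $u^{T}=\pm u$, with the symmetric case reduced to entrywise conjugation via Takagi ($u=vv^{T}$, using $v^{T}\bar{v}=I$) and fixed points $M_m(\mb{R})$, and the antisymmetric case forcing $m$ even, conjugate to the symplectic form, with fixed points $M_{m/2}(\mb{H})$. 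The alternative you mention (Wedderburn--Artin plus Frobenius) is the other classical proof; it bypasses the real-form analysis but, as you note, one must then check the algebra isomorphism can be taken to be a *-isomorphism compatible with the (unique) C*-norm on each block, so the total effort is comparable.
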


We also have a complete classification of the abelian C*-algebras.

\begin{fact}\label{accsa}
  Every abelian complex C*-algebra $A$ is isomorphic to $\ctsvai{\mb{C}}{X}$, the space of continuous complex-valued functions on $X$ vanishing at infinity, where $X$ is the set of nonzero complex *-homomorphisms from $A$ to $\mb{C}$.
\end{fact}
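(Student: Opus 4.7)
The plan is to prove this via the standard Gelfand representation, handling the unital case first and then reducing the general case to the unital one via the unitization. Let $X$ denote the set of nonzero complex *-homomorphisms $\chi \colon A \to \mb{C}$, equipped with the weak-* topology (pointwise convergence on $A$). The goal is to define the Gelfand transform $\Gamma \colon A \to \ctsvai{\mb{C}}{X}$ by $\Gamma(a)(\chi) = \chi(a)$ and show it is an isometric *-isomorphism.

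First I would treat the unital case. Each $\chi \in X$ is automatically continuous of norm $1$, so $X$ sits inside the unit ball of $A^{*}$; since the defining equations $\chi(xy) = \chi(x)\chi(y)$, $\chi(x^{*}) = \overline{\chi(x)}$, and $\chi(1) = 1$ are preserved under pointwise limits, $X$ is weak-* closed, hence compact Hausdorff by Banach-Alaoglu. That $\Gamma$ is a *-homomorphism into $\cts{\mb{C}}{X}$ is immediate from the definitions. The key quantitative step is that $\Gamma$ is isometric: standard commutative Banach algebra theory identifies $\sigma_{A}(a)$ with $\{\chi(a) \st \chi \in X\}$, so the spectral radius satisfies $r(a) = \norm{\Gamma(a)}_{\infty}$. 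For self-adjoint $a$, the C*-axiom gives $\norm{a^{2^{n}}} = \norm{a}^{2^{n}}$, hence $r(a) = \norm{a}$; then for general $a$ one has $\norm{a}^{2} = \norm{a^{*}a} = r(a^{*}a) = \norm{\Gamma(a^{*}a)}_{\infty} = \norm{\Gamma(a)}_{\infty}^{2}$. In particular $\Gamma$ is injective with closed image. Surjectivity comes from the Stone-Weierstrass theorem: $\Gamma(A)$ is a *-subalgebra of $\cts{\mb{C}}{X}$ containing the constants, and it separates points of $X$ essentially by construction (distinct characters differ on some $a$, so $\Gamma(a)$ separates them).

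For the nonunital case I would pass to the unitization $\tilde{A}$, which is abelian, and apply the unital case to obtain $\tilde{A} \cong \cts{\mb{C}}{\tilde{X}}$ for a compact Hausdorff $\tilde{X}$. The quotient map $\tilde{A} \to \tilde{A}/A \cong \mb{C}$ is a character $\chi_{\infty} \in \tilde{X}$, and the other characters of $\tilde{A}$ are precisely the unique unital extensions of the nonzero characters of $A$; this identifies $X$ with $\tilde{X} \setminus \{\chi_{\infty}\}$ as a topological space (a locally compact Hausdorff space whose one-point compactification is $\tilde{X}$). Under the Gelfand isomorphism for $\tilde{A}$, the ideal $A$ corresponds exactly to $\{f \in \cts{\mb{C}}{\tilde{X}} \st f(\chi_{\infty}) = 0\} \cong \ctsvai{\mb{C}}{X}$, giving the desired isomorphism.

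The main obstacle, and the step I would need to be most careful with, is verifying that $\Gamma$ is isometric, since this is where the C*-axiom is actually used and where the purely Banach-algebraic content of Gelfand's theorem is upgraded to an isometric identification. Everything else is either soft topology (compactness of $X$, identification with a one-point compactification in the nonunital case) or a direct application of Stone-Weierstrass.
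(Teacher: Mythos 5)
Your proof is correct and is the standard Gelfand--Naimark argument; the paper states this result as a background Fact without proof (deferring to its standard references), and your route through the character space with the weak-* topology, the spectral-radius/C*-axiom computation for isometry, Stone--Weierstrass for surjectivity, and unitization plus one-point compactification for the nonunital case is exactly the textbook proof. The only step worth making explicit is the small lemma that every character of a commutative C*-algebra is automatically *-preserving (e.g.\ via $|\chi(e^{ita})| \leq 1$ for self-adjoint $a$), so that the set $X$ of nonzero *-homomorphisms coincides with the full character space and the Banach-algebra identification $\sigma_{A}(a) = \{\chi(a) \st \chi \in X\}$ applies as you use it.
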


If $X$ is a locally compact Hausdorff space and $\theta : X \to X$ is a homeomorphism such that $\theta(\theta(x)) = x$ for $x \in X$, then we let \[\ctsconjvai{\theta}{X} = \{f \in \ctsvai{\mb{C}}{X} \st \conj{f(\theta(x))} = f(x) \text{ for } x \in X\}.\]

\begin{fact}\label{arcsa}
  Every abelian real C*-algebra $A$ is isomorphic to $\ctsconjvai{\theta}{X}$, where $X$ is the set of nonzero real *-homomorphisms from $A$ to $\mb{C}$, and $\theta : X \to X$ is defined by $\theta(x) = \conj{x}$.
\end{fact}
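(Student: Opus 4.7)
The plan is to reduce this statement to the complex case (Fact~\ref{accsa}) via the complexification $A_c$, using the conjugation $\tau$ on $A_c$ to carve out $A$.

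First I would form the complexification $A_c$, which is an abelian complex C*-algebra. Applying Fact~\ref{accsa}, we obtain a complex *-isomorphism $\Gamma : A_c \to \ctsvai{\mb{C}}{Y}$, where $Y$ is the set of nonzero complex *-homomorphisms $A_c \to \mb{C}$ with the Gelfand (pointwise convergence) topology, and $\Gamma(a)(\chi) = \chi(a)$. The next step is to identify $Y$ with $X$. Given $\phi \in X$ (a nonzero real *-homomorphism $A \to \mb{C}$), define $\phi_c : A_c \to \mb{C}$ by $\phi_c(a + ib) = \phi(a) + i\phi(b)$; routine checking shows $\phi_c$ is a complex *-homomorphism. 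Conversely, any $\chi \in Y$ restricts to a nonzero real *-homomorphism $A \to \mb{C}$, since $\chi(a^*) = \conj{\chi(a)}$ automatically for $a \in A \subset A_c$. These operations are mutually inverse and continuous in the pointwise topologies, giving a homeomorphism $X \cong Y$.

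Next I would track how the natural conjugation $\tau(x+iy) = x - iy$ on $A_c$ transfers through $\Gamma$. For $a \in A_c$ and $x \in X$, a direct computation shows that the extension of $\theta(x) = \conj{x}$ to $A_c$ is given by $(\theta(x))_c(a) = \conj{x_c(\tau(a))}$, so
\[
\Gamma(a)(\theta(x)) = (\theta(x))_c(a) = \conj{x_c(\tau(a))} = \conj{\Gamma(\tau(a))(x)}.
\]
Hence $\conj{\Gamma(a)(\theta(x))} = \Gamma(\tau(a))(x)$ for all $x \in X$. Since $\Gamma$ is injective, the condition $\Gamma(a) \in \ctsconjvai{\theta}{X}$ (i.e., $\conj{\Gamma(a)(\theta(x))} = \Gamma(a)(x)$ for all $x$) is equivalent to $\Gamma(\tau(a)) = \Gamma(a)$, i.e., $\tau(a) = a$, i.e., $a \in A$. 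Restricting $\Gamma$ to $A$ therefore yields a real *-isomorphism onto $\ctsconjvai{\theta}{X}$, and since $\Gamma$ is already an isometry, this restriction is the desired isomorphism.

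The main technical points to watch are bookkeeping issues rather than deep obstacles: checking that the assignment $\phi \mapsto \phi_c$ is a well-defined topological bijection onto the Gelfand spectrum of $A_c$ (this requires verifying that uniqueness of the complex-linear extension, which rules out the conjugate-linear alternative $a + ib \mapsto \phi(a) - i\phi(b)$), and verifying that $\theta$ is a self-inverse homeomorphism on $X$ so that $\ctsconjvai{\theta}{X}$ is well-formed. Neither is difficult, but together they form the one place where the real-versus-complex distinction has to be handled with care.
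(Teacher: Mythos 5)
Your proposal is correct. The paper states this classification as a background fact without giving a proof (it is the standard abelian real Gelfand--Naimark theorem from the real C*-algebra literature), so there is no in-paper argument to diverge from; your reduction to the complex case via the complexification $A_c$, the identification of the real character space $X$ with the Gelfand spectrum of $A_c$, and the observation that the natural conjugation $\tau$ on $A_c$ transports to $f \mapsto \conj{f \circ \theta}$ so that the fixed-point algebra $A$ maps onto $\ctsconjvai{\theta}{X}$, is exactly the expected route and matches the paper's general strategy of transferring statements between real and complex C*-algebras by complexification. The bookkeeping points you flag (complex-linearity forcing extension and restriction to be mutually inverse, and $\theta$ being a self-inverse homeomorphism) are indeed the only places needing care, and your treatment of them is sound.
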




Universal C*-algebras are often difficult to realize concretely, but as we will see further on, their definition in terms of generators and relations grants access to an effective perspective.

Let $\mc{G}$ be a set of noncommuting indeterminates, which we call \textbf{generators}.
Let $\mc{R}$ be a set of \textbf{relations} in $\mc{G}$; specifically, relations of the form $\norm{p(x_{1},\ldots,x_{n})} \leq a$ where $p$ is a *-polynomial over $\mb{K}$ in $n$ noncommuting variables with no constant term, $x_{1},\ldots,x_{n}$ belong to $\mc{G}$, and $a$ is a nonnegative real number.
We also require that for every generator $x \in \mc{G}$ there is a relation of the form $\norm{x} \leq M$ in $\mc{R}$.
We will often follow the convention of writing $p = q$ for the relation $\norm{p - q} \leq 0$.
A \textbf{representation of $(\mc{G}, \mc{R})$} is an assignment of generators $j : \mc{G} \to A$, where $A$ is a C*-algebra over $\mb{K}$, such that $\norm{p(j(x_{1}),\ldots,j(x_{n}))}_{A} \leq a$ for every relation $\norm{p(x_{1},\ldots,x_{n})} \leq a$ in $\mc{R}$.

\begin{definition}
     The \inlinedef{universal C*-algebra} of $(\mc{G},\mc{R})$ over $\mb{K}$ is a C*-algebra $\urep{\mb{K}}{\mc{G}}{\mc{R}}$ over $\mb{K}$, along with a representation $\iota : \mc{G} \to \urep{\mb{K}}{\mc{G}}{\mc{R}}$ of $(\mc{G},\mc{R})$, such that for all representations $j : \mc{G} \to A$ of $(\mc{G},\mc{R})$ there is a unique *-homomorphism $\varphi : \urep{\mb{K}}{\mc{G}}{\mc{R}} \to A$ for which $\varphi(\iota(x)) = j(x)$ for all $x \in \mc{G}$.
\end{definition}
If $\urep{\mb{K}}{\mc{G}}{\mc{R}}$ exists, then it is unique up to isomorphism, and it is generated by $\iota(\mc{G})$ as a C*-algebra over $\mb{K}$.
The existence of universal complex C*-algebras is well-established, see~\cite{Blackadar_ShapeTheory_1985} or~\cite{Loring_LiftingSolutions_1997}, and the existence of universal real C*-algebras follows by the same argument.
It may be of interest to model theorists that, in both cases, existence of universal C*-algebras is just an application of the continuous form of the classical fact that strict universal Horn theories admit all initial term models.

Note that if $\mc{G}$ is a set of generators and $\mc{R}$ is a set of relations over $\mb{R}$, then $\urep{\mb{C}}{\mc{G}}{\mc{R}}$ is the complexification of $\urep{\mb{R}}{\mc{G}}{\mc{R}}$.

We have not required that the C*-algebras be unital.
If we want to speak of universal C*-algebras among unital C*-algebras, we often need to explicitly require a generator for the identity and relations describing that it is indeed the identity.
To that end, we define here \[\Iden(e;X) = \{e^{2} = e^{*} = e\} \cup \{ex = xe = x \st x \in X\}\] for all indeterminates $e$ and sets of indeterminates $X$.

Let $G$ be a countable discrete group with identity $e$.

\begin{definition}
  If we let $\mc{G} = G$ and let \[\mc{R} = \{x^{*}x = xx^{*} = e \st x \in \mc{G}\} \cup \{xy = z \st x,y,z \in \mc{G} \text{ and } xy = z \text{ in } G\},\] then the universal C*-algebra of $(\mc{G},\mc{R})$ over $\mb{K}$ is called the \inlinedef{universal group C*-algebra} of $G$ over $\mb{K}$ and denoted $\ugcsa{\mb{K}}{G}$.
\end{definition}

There is another important way to form group C*-algebras.
Let $\ell^{2}G$ be the Hilbert space of all square summable functions from $G$ to $\mb{C}$.
Then $G$ acts on $\ell^{2}G$ by left multiplication on the standard orthonormal basis, and in this way can be embedded as a set of operators in $\blo{\mb{K}}{\ell^{2}G}$.
\begin{definition}
The C*-algebra over $\mb{K}$ generated by $G$ in $\blo{\mb{K}}{\ell^{2}G}$ is called the \inlinedef{reduced group C*-algebra} of $G$ over $\mb{K}$ and denoted $\rgcsa{\mb{K}}{G}$.
\end{definition}

The following lifting property will come in handy.
A proof for the complex case can be found in~\cite{RordamEtal_IntroductionKtheory_2000} (2.2.10), and the real case follows from complexification.
\begin{fact}\label{norm lifts}
  Let $A$ and $B$ be C*-algebras over $\mb{K}$.
  If $\pi : A \to B$ is a surjective $*$-homomorphism, then for all $b \in B$ there exist $a \in A$ such that $\norm{a}_{A} = \norm{b}_{B}$ and $\pi(a) = b$.
\end{fact}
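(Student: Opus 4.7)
The plan is to handle the complex case first via a continuous functional calculus argument that ``trims'' the norm of an arbitrary preimage, then deduce the real case from complexification.

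For the complex case, if $b = 0$ take $a = 0$. Otherwise, fix any preimage $a_0 \in A$ with $\pi(a_0) = b$, and let $c = a_0^* a_0 \in A$, a positive element satisfying $\pi(c) = b^* b$ and $\sigma_B(b^* b) \subseteq [0, \norm{b}^2]$. Define a continuous cutoff $g : [0,\infty) \to [0,1]$ by
\[
  g(t) = \begin{cases} 1 & t \leq \norm{b}^2, \\ \norm{b}/\sqrt{t} & t \geq \norm{b}^2, \end{cases}
\]
which satisfies $g(t)^2 t \leq \norm{b}^2$ for all $t \geq 0$, with equality on $[0, \norm{b}^2]$. Form $g(c)$ via continuous functional calculus in the unitization $\tilde{A}$, and set $a = a_0 \, g(c)$. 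Since $A$ is an ideal of $\tilde{A}$, the element $a$ lies in $A$. A direct computation using the C*-identity gives
\[
  \norm{a}^{2} = \norm{a^{*}a} = \norm{g(c)\, c\, g(c)} = \sup\{g(t)^{2} t \st t \in \sigma(c)\} \leq \norm{b}^{2},
\]
while $\pi(a) = b \cdot g(b^{*}b) = b$, because $g \equiv 1$ on $\sigma_B(b^{*}b) \subseteq [0, \norm{b}^2]$. Since $*$-homomorphisms are norm-decreasing, $\norm{b} \leq \norm{a}$, so $\norm{a} = \norm{b}$.

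For the real case, pass to the complexification: $\pi_{c} : A_{c} \to B_{c}$ is again a surjective $*$-homomorphism and $b \in B \subseteq B_{c}$. The complex case provides $a' \in A_{c}$ with $\pi_{c}(a') = b$ and $\norm{a'}_{A_{c}} = \norm{b}$. Let $\tau$ denote the natural conjugations on $A_{c}$ and $B_{c}$. These are isometric (because $\norm{x+iy} = \norm{x-iy}$) and intertwine $\pi_{c}$, so $\pi_{c}(\tau(a')) = \tau(b) = b$. Writing $a' = x + iy$ with $x,y \in A$, set $a = \tfrac{1}{2}(a' + \tau(a')) = x$. Then $a \in A$, $\pi(a) = b$, and $\norm{a} \leq \norm{b}$ by the triangle inequality; norm-decreasingness of $\pi$ then forces $\norm{a} = \norm{b}$.

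The main subtlety is navigating non-unitality in the functional calculus step: the cutoff $g$ does not vanish at $0$, so $g(c)$ itself need not belong to $A$, and one must work in $\tilde{A}$ and invoke the ideal property of $A$ to conclude $a_{0} g(c) \in A$. The remaining care in the real case is that the complex-case lift $a'$ may fail to be fixed by $\tau$, which is repaired by symmetrization — an operation that preserves the preimage relation and can only decrease the norm, so combining with the automatic inequality $\norm{a} \geq \norm{b}$ yields equality.
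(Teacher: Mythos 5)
Your proof is correct, and it follows the same route the paper intends: the paper gives no argument of its own, citing the standard reference (R{\o}rdam et al., 2.2.10) for the complex case and asserting the real case by complexification, and your complex-case argument is precisely that standard proof (cut-off function $g$ applied via functional calculus in $\tilde{A}$, using that $A$ is an ideal, plus norm-decrease of $*$-homomorphisms to upgrade $\leq$ to $=$). Your real-case step spells out correctly what "follows from complexification" means: symmetrize the complex lift with the canonical conjugation, which preserves the preimage and cannot increase the norm.
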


When dealing with finite-dimensional C*-algebras, it will be useful to remember they are von Neumann algebras.

\begin{definition}
  Let $\mc{H}$ be a Hilbert space over $\mb{K}$.
  For $S \subseteq \blo{\mb{K}}{\mc{H}}$, we define the \inlinedef{commutant} $S'$ of $S$ in $\blo{\mb{K}}{\mc{H}}$ by \[S' = \{x \in \blo{\mb{K}}{\mc{H}} \st xs= sx \text{ for } s \in S\}.\]
\end{definition}
\begin{definition}[Von Neumann double commutant theorem]\label{vndct}
  Let $\mc{H}$ be a Hilbert space over $\mb{K}$.
  If $A \subseteq \blo{\mb{K}}{\mc{H}}$ is a C*-algebra over $\mb{K}$ which contains the identity, then $A$ is a von Neumann algebra if and only if $A'' = A$ in $\blo{\mb{K}}{\mc{H}}$.
\end{definition}

\subsection{Computability}
We present the basics of effective metric structure theory in the context of C*-algebras.
Our presentation is an instance of the general framework for arbitrary metric structures developed by Franklin and McNicholl in~\cite{FranklinMcnicholl_DegreesLowness_2020}.
See also~\cite{ClaninEtal_AnalyticComputable_2019},\cite{Melnikov_ComputablyIsometric_2013}, or~\cite{Pour-elRichards_ComputabilityAnalysis_2017} for a treatment of Banach spaces.
\begin{definition}
  Given a separable C*-algebra $A$ over $\mb{K}$, a \textbf{presentation} of $A$ is a pair $(A, \vk{a})$, where $\vk{a}$ is a countable sequence of elements of $A$ such that $\vk{a}$ generates $A$ as a C*-algebra over $\mb{K}$.
\end{definition}
Every separable C*-algebra admits a presentation, just consider any countable dense subset.
The presentation is \inlinedef{finitely generated} if the length of $\vk{a}$ is finite.
We refer to the elements of $\vk{a}$ as the \textbf{special points} of the presentation.

We restrict our attention to the class of \textbf{rational polynomials}, where a real polynomial is rational if its coefficients belong to $\mb{Q}$, and a complex polynomial is rational if its coefficients belong to $\mb{Q}(i)$.
If $p$ is a rational *-polynomial in $n$ noncommuting variables with no constant term, and $(A,\vk{a})$ is a presentation of $A$, then we say $p(a_{i_{1}},\ldots,a_{i_{n}})$ is a \textbf{rational point} of $(A,\vk{a})$ for $i_{1},\ldots,i_{n} \in \mb{N}$.

\begin{definition}
  A presentation $A^{\dagger}$ of a C*-algebra $A$ over $\mb{K}$ is \inlinedef{computable} if there is an effective procedure which, when given a rational point $r$ of $A^{\dagger}$ and $k \in \mb{N}$, returns a rational $q \in \mb{Q}$ such that $|\norm{r} - q| < 2^{-k}$.
\end{definition}

Here are some standard computable presentations.
\begin{examples}
\hfill

\begin{enumerate}[label=(\roman*)]
    \item $(\cts{\mb{K}}{[0,1]}, (1,x))$, where $x : [0,1] \to \mb{K}$ is the identity function. The rational points are the rational *-polynomials in $x$, which are dense by Stone-Weierstrass. 
    \item $(M_n(\mb{K}), (e_{ij})_{1 \leq i,j \leq n})$, where each $e_{ij}$ is $1$ in the $(i,j)$ entry and $0$ in all others. The rational points are the rational *-polynomials in $(e_{ij})_{1 \leq i,j \leq n}$, which are clearly dense.
\end{enumerate}
\end{examples}

Let $A$ be a C*-algebra over $\mb{K}$ with computable presentation $A^{\dagger}$.
We say $x \in A$ is a \inlinedef{computable point} of $A^{\dagger}$ if there is an effective procedure which, when given $k \in \mb{N}$, returns a rational point $r$ of $A^{\dagger}$ such that $\norm{x - r} < 2^{-k}$.
Trivially, every rational point of $A^{\dagger}$ is a computable point of $A^{\dagger}$.
A sequence $(x_{n})_{n\in\mb{N}}$ of computable points of $A^{\dagger}$ is \inlinedef{uniformly computable} from $A^{\dagger}$ if there is an effective procedure which, when given $n \in \mb{N}$ and $k \in \mb{N}$, returns a rational point $r$ of $A^{\dagger}$ such that $\norm{x_{n} - r} < 2^{-k}$.

Let $A$ and $B$ be C*-algebras over $\mb{K}$ with computable presentations $A^\dagger$ and $B^\dagger$ respectively.
Let $\varphi : A \to B$ be a *-homomorphism.
Then $\varphi$ is a \inlinedef{computable *-homomorphism} from $A^{\dagger}$ to $B^{\dagger}$ if the images of rational points of $A^\dagger$ are uniformly computable with respect to $B^\dagger$.
Note this notion of computable map agrees with the usual one, as in~\cite{FranklinMcnicholl_DegreesLowness_2020}, since *-homomorphisms are Lipschitz.
If $\varphi$ is bijective and $\varphi$ is computable, then $\varphi^{-1}$ is computable, and we say $\varphi$ is a \textbf{computable isomorphism} from $A^\dagger$ to $B^\dagger$.
A C*-algebra $A$ over $\mb{K}$ is \textbf{computably categorical} if for all computable presentations $A^{\dagger}$ and $A^{+}$ of $A$, there exists a computable isomorphism from $A^{\dagger}$ to $A^{+}$.

We also consider computability properties of closed subsets of C*-algebras as in~\cite{BrattkaPresser_ComputabilitySubsets_2003}.

Let $A$ be a C*-algebra over $\mb{K}$ with computable presentation $A^{\dagger}$.
An open (resp.\ closed) \textbf{rational ball} of $A^{\dagger}$ is an open (resp.\ closed) ball in $A$ whose center is a rational point of $A^{\dagger}$ and whose radius is a positive dyadic rational. We require the radius to be dyadic to better integrate with the framework of continuous first-order logic established in~\cite{YaacovPedersen_ProofCompleteness_2010}.

Let $S$ be a closed subset of $A$. If the set of all open rational balls of $A^{\dagger}$ that intersect $S$ is c.e., then $S$ is \textbf{c.e.\ closed}.
If there is a c.e.\ set of open rational balls of $A^{\dagger}$ whose union is the complement of $S$, then $S$ is \textbf{co-c.e.\ closed}.
Together, if $S$ is c.e.\ closed and co-c.e.\ closed, then $S$ is \textbf{computable closed}.
If the set of all closed rational balls of $A^{\dagger}$ that do not intersect $S$ is c.e., then $S$ is \textbf{strongly co-c.e.\ closed}.
Similarly, if $S$ is c.e.\ closed and strongly co-c.e.\ closed, then $S$ is \textbf{strongly computable closed}.

Here are some fundamental propositions in the computable structure theory for C*-algebras. The straightforward proofs are left to the reader.

\begin{proposition}\label{moc}
  There is an effective procedure which, when given a rational noncommutative *-polynomial over $\mb{K}$ with no constant term $q(z_1,\ldots,z_n)$, a rational bound $M$, and $k \in \mb{N}$, returns $j \in \mb{N}$ such that, for all Banach *-algebras $B$ and all $v_1,\ldots,v_n,w_1,\ldots,w_n \in B$, if $\max_i\norm{v_i} \leq M$, $\max_i\norm{w_i} \leq M$, and $\max_i\norm{v_i - w_i} \leq 2^{-j}$, then \[\norm{q(v_1,\ldots,v_n) - q(w_1,\ldots,w_n)} < 2^{-k}.\]
\end{proposition}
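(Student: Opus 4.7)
The plan is to produce an explicit Lipschitz constant for $q$ on the ball of radius $M$, and then take $j$ large enough to beat $2^{-k}$.

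First I would decode $q$ as a finite sum of *-monomials $q = \sum_\alpha c_\alpha m_\alpha$, where each $m_\alpha$ is a noncommutative monomial in the letters $z_1,\ldots,z_n,z_1^*,\ldots,z_n^*$ of some degree $d_\alpha \geq 1$ and $c_\alpha$ is a rational coefficient. The number of monomials, the degrees $d_\alpha$, and the rational moduli $|c_\alpha|$ are all uniformly computable from (a code of) $q$.

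For each monomial I would estimate $\norm{m_\alpha(v) - m_\alpha(w)}$ by the standard telescoping identity
\[
u_1 \cdots u_d - u_1' \cdots u_d' \;=\; \sum_{i=1}^d u_1 \cdots u_{i-1}\,(u_i - u_i')\,u_{i+1}' \cdots u_d',
\]
combined with submultiplicativity of the Banach algebra norm and the hypothesis $\max_i \norm{v_i}, \norm{w_i} \leq M$. This yields $\norm{m_\alpha(v) - m_\alpha(w)} \leq d_\alpha M^{d_\alpha - 1}\max_k \norm{v_k - w_k}$. Summing over $\alpha$ and applying the triangle inequality gives an overall Lipschitz constant $L = \sum_\alpha |c_\alpha|\, d_\alpha\, M^{d_\alpha - 1}$, which is a rational number effectively computable from $q$ and $M$. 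The procedure then simply returns any $j$ large enough that $L \cdot 2^{-j} < 2^{-k}$, e.g.\ any $j$ with $2^{j-k} > L$.

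There is no genuine obstacle; the only point needing attention is the treatment of the adjoint, namely that slots containing $v_k^*$ still have norm at most $M$ and that $\norm{v_k^* - w_k^*} \leq \norm{v_k - w_k}$. These are immediate once the involution is assumed isometric, which is automatic for every C*-algebra and so covers every setting in which the paper applies the proposition.
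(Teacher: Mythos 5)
Your argument is correct and is precisely the straightforward telescoping/Lipschitz estimate that the paper leaves to the reader: decompose $q$ into monomials, bound each difference by $d_\alpha M^{d_\alpha-1}\max_k\norm{v_k-w_k}$ via the telescoping identity and submultiplicativity, sum to get an effectively computable constant $L$, and return any $j$ with $L\cdot 2^{-j} < 2^{-k}$. Your remark about the involution being isometric is the right point to flag, and since the paper only applies the proposition to C*-algebras (where this is automatic, as it is under the standard convention for Banach *-algebras), nothing further is needed.
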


\begin{proposition}\label{lem: unif comp substructure}
  Let $A$ be a C*-algebra over $\mb{K}$ and let $A^\dagger$ be a computable presentation of $A$.
  Let $\vk{x}$ be a sequence of uniformly computable points of $A^\dagger$.
  Let $B = \cgen{\mb{K}}{\vk{x}} \subseteq A$, the C*-algebra over $\mb{K}$ generated by $\vk{x}$.
  Then $(B, \vk{x})$ is a computable presentation of $B$ and the inclusion map from $(B, \vk{x})$ to $A^\dagger$ is computable.
\end{proposition}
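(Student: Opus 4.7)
The plan is to reduce both parts to a single effective approximation claim: given a rational point $r$ of $(B, \vk{x})$ and a precision $k \in \mb{N}$, we can effectively produce a rational point $r'$ of $A^\dagger$ with $\norm{r - r'} < 2^{-k}$. This suffices because $B$ is a C*-subalgebra of $A$, so the inclusion $B \hookrightarrow A$ is isometric and the norm on $B$ is simply the restriction of the norm on $A$; in particular $\norm{r}_B = \norm{r}_A$ for every rational point $r$ of $(B,\vk{x})$.

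To produce $r'$, I first extract effective upper bounds on the special points. From the uniform computability of $\vk{x}$ from $A^\dagger$, one can effectively compute, given $n$, a rational $M_n \geq \norm{x_n}$: find a rational point $y$ of $A^\dagger$ with $\norm{y - x_n} < 1$, use $A^\dagger$ to approximate $\norm{y}$ to within $1$, and add a safety margin. Now, given a rational point $r = p(x_{i_1}, \ldots, x_{i_m})$ of $(B,\vk{x})$, set $M = \max_\ell M_{i_\ell}$ and apply \autoref{moc} to $p$, the bound $M$, and precision $k$, yielding an index $j$. Using uniform computability again, compute rational points $y_1, \ldots, y_m$ of $A^\dagger$ with $\norm{y_\ell - x_{i_\ell}} < \min(2^{-j}, 1)$; the cap by $1$ ensures $\norm{y_\ell} \leq M$ as well. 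Then $r' := p(y_1, \ldots, y_m)$ is a rational point of $A^\dagger$ and \autoref{moc} gives $\norm{r - r'} < 2^{-k}$.

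Both conclusions now follow. For the computability of $(B, \vk{x})$, apply the approximation with precision $k+1$ to obtain $r'$, then use $A^\dagger$ to produce $q \in \mb{Q}$ with $|\norm{r'} - q| < 2^{-(k+1)}$; the triangle inequality combined with the isometric inclusion yields $|\norm{r}_B - q| < 2^{-k}$. For the computability of the inclusion map, the approximation procedure itself witnesses that the images of rational points of $(B,\vk{x})$ are uniformly computable as points of $A^\dagger$, uniformly in the input polynomial and indices.

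The only step requiring real care is the production of the bound $M$: \autoref{moc} gives no useful estimate unless the bound dominates the norms of both the $x_{i_\ell}$ and their rational approximants $y_\ell$. The safety margin built into each $M_n$ together with the cap $\min(2^{-j},1)$ on the approximation error handles this cleanly, and I do not foresee any other obstacle — the rest is routine effective bookkeeping through the Lipschitz estimate.
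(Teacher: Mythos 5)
Your proof is correct: the paper leaves this proposition as a straightforward exercise, and your argument --- using the uniform computability of $\vk{x}$ together with the effective modulus of continuity from \autoref{moc} to replace each rational point of $(B,\vk{x})$ by a nearby rational point of $A^\dagger$ --- is exactly the intended routine argument, with both conclusions (computability of $(B,\vk{x})$ and of the inclusion) falling out of that single approximation step since the inclusion $B\subseteq A$ is isometric. The only delicate point, ensuring the rational bound fed to \autoref{moc} dominates the norms of both the special points and their approximants, is handled adequately by your safety margin and the cap $\min(2^{-j},1)$ on the approximation error.
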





\begin{corollary}\label{cor: isomorphic to finite subset}
  Let $A$ be a C*-algebra over $\mb{K}$ and let $A^\dagger$ be a computable presentation of $A$.
  Let $\vk{x}$ be a uniformly computable sequence of points of $A^\dagger$ such that $\cgen{\mb{K}}{\vk{x}} = A$.
  Then $(A, \vk{x})$ is a computable presentation of $A$ which is computably isomorphic to $A^\dagger$ via the identity map.
\end{corollary}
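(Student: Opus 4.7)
The plan is to invoke \autoref{lem: unif comp substructure} directly. First, I would set $B := \cgen{\mb{K}}{\vk{x}}$ and apply the proposition to the uniformly computable sequence $\vk{x}$ in $A^\dagger$. The proposition produces a computable presentation $(B, \vk{x})$ together with a computable inclusion map from $(B, \vk{x})$ into $A^\dagger$. By hypothesis $B = A$, so $(B, \vk{x})$ is in fact $(A, \vk{x})$, and the inclusion map, viewed as a function between C*-algebras with common underlying set $A$, is literally the identity map $\id_A$.

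Next, I would appeal to the fact recorded earlier in the preliminaries that every bijective computable *-homomorphism between computable presentations is automatically a computable isomorphism. Since $\id_A$ is trivially bijective and is a computable *-homomorphism from $(A,\vk{x})$ to $A^\dagger$ by the previous step, it follows that $\id_A$ is a computable isomorphism between the two presentations, which is precisely the content of the corollary.

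The main obstacle, if one can call it that, is purely a matter of unwinding definitions: one must recognize that when two computable presentations share the same underlying C*-algebra, the inclusion map supplied by \autoref{lem: unif comp substructure} coincides with the set-theoretic identity. All the quantitative effective content — producing, given a rational point $p(x_{i_1},\ldots,x_{i_n})$ of $(A,\vk{x})$ and a tolerance $2^{-k}$, a rational point of $A^\dagger$ within $2^{-k}$ of it via \autoref{moc} together with the uniform computability of $\vk{x}$ — has already been absorbed into \autoref{lem: unif comp substructure} and does not need to be reproduced here.
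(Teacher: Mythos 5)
Your proposal is correct and matches the intended argument: the paper leaves this corollary unproved precisely because it is the special case $B=A$ of \autoref{lem: unif comp substructure}, with the inclusion becoming the identity and the recorded fact that a bijective computable $*$-homomorphism is a computable isomorphism finishing the job. Nothing further is needed.
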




\subsection{Linear algebra}

When working with real C*-algebras, we will need to do linear algebra over $\mb{H}$.
Let $\mb{D}$ be one of $\mb{R}$, $\mb{H}$, or $\mb{C}$, viewed as a C*-algebra over corresponding $\mb{K}$.
We develop basic linear algebra facts over $\mb{D}$ for that purpose. See~\cite{Cohn_Algebra_1995} or~\cite{Lorenz_Algebra_2008} for the fundamentals of linear algebra, and~\cite{FarenickPidkowich_SpectralTheorem_2003} for an exploration of linear algebra for quaternions.

A \inlinedef{vector space} $V$ over $\mb{D}$ is a right $\mb{D}$-module.
Any vector space over $\mb{D}$ can be viewed as a vector space over $\mb{K}$ by restricting scalars to $\mb{K}1$ inside $\mb{D}$.
An \inlinedef{inner product space} over $\mb{D}$ is a vector space $V$ over $\mb{D}$ equipped with an inner product $\langle\;,\;\rangle : V \times V \to \mb{D}$ that has the following properties for $x,y,z \in V$ and $a \in \mb{D}$:
\begin{enumerate}[label= (\roman*)]
  \item $\langle x + y, z \rangle = \langle x, z \rangle + \langle y,z \rangle$,
  \item $\langle x, y + z \rangle = \langle x,y \rangle + \langle x,z \rangle$,
  \item $\langle xa,y \rangle = a^{*}\langle x,y \rangle$,
  \item $\langle x, ya \rangle = \langle x,y \rangle a$,
  \item $\langle x,y \rangle^{*} = \langle y, x \rangle$,
        \item $\langle x,x \rangle \geq 0$ and if $\langle x,x \rangle = 0$, then $x = 0$.
\end{enumerate}
The inner product determines a norm on $V$ given by $x \mapsto \sqrt{\langle x,x \rangle}$, where we identify the self-adjoint elements of $\mb{D}$ with $\mb{R}$.
A \inlinedef{Hilbert space} $\mc{H}$ over $\mb{D}$ is an inner product space over $\mb{D}$ which is complete with respect to the induced norm.
Note $\mc{H}$ can also be viewed as a Hilbert space over $\mb{R}$ when equipped with the inner product $(x,y) \mapsto \frac{1}{2}(\langle x,y \rangle + \langle x,y\rangle^{*})$.

We view $\mb{D}$ as a vector space over itself.
The set of $n$ by $n$ matrices $M_{n}(\mb{D})$ with entries in $\mb{D}$ can be identified with the set of $\mb{D}$-linear operators $\blo{\mb{D}}{\mb{D}^{n}}$ on $\mb{D}^{n}$.
Rank is well-defined since $\mb{D}$ is a division ring so has the invariant basis number property.
Any $\mb{D}$-linear operator on a Hilbert space $\mc{H}$ over $\mb{D}$ is also $\mb{K}$-linear, so we can view $\blo{\mb{D}}{\mc{H}}$ as a subspace of $\blo{\mb{K}}{\mc{H}}$.
For all $a \in \mb{D}$, define $R_a : \mc{H} \to \mc{H}$ to be multiplication on the right by $a$.
Then the commutant $\blo{\mb{D}}{\mc{H}}'$ of $\blo{\mb{D}}{\mc{H}}$ in $\blo{\mb{K}}{\mc{H}}$ is $\{R_{a} \st a \in \mb{D}\}$.
In particular, the center of $\blo{\mb{D}}{\mc{H}}$ can be identified with the center of $\mb{D}$.

Of course, $\mb{D}$ itself is a Hilbert space over $\mb{K}$ when equipped with the natural inner product.
We identify a standard orthonormal basis $Z$ of $\mb{D}$ over $\mb{K}$ as follows.
We let
\begin{itemize}
        \item $Z = \{1\}$ if $\mb{D} = \mb{K} = \mb{R}$ or $\mb{D} = \mb{K} = \mb{C}$,
  \item $Z = \{1, i\}$ if $\mb{D} = \mb{C}$ and $\mb{K} = \mb{R}$,
  \item $Z = \{1, i, j, k\}$ if $\mb{D} = \mb{H}$ and $\mb{K} = \mb{R}$.
\end{itemize}

We also define a system of matrix units which characterize $M_{n}(\mb{D})$ as a C*-algebra over $\mb{K}$ up to isomorphism.
For any element $t$ in $Z \cup \{-z \st z \in Z\}$, we define $\epsilon(t) \in \{1,-1\}$ and $b(t) \in Z$ such that $t = \epsilon(t)b(t)$.
We say $(f_{rs}^{z} \st 1 \leq r,s \leq n, z \in Z)$ is a \inlinedef{system of matrix units} for $M_{n}(\mb{D})$ over $\mb{K}$ if $(f_{rs}^{z})^{*} = \epsilon(z^{*})f_{sr}^{z}$ and $f_{rs}^{z}f_{uv}^{w} = \epsilon({zw})\delta_{su}f^{b(zw)}_{rv}$ for $1 \leq r,s,u,v \leq n$ and $z,w \in Z$.
The standard system of matrix units is then just $(e_{rs}^{z} \st 1 \leq r,s \leq n, z \in Z)$ where $e_{rs}^{z}$ is the matrix with $z$ in the $(r,s)$ entry and $0$ in all other entries for $1 \leq r,s \leq n$ and $z \in Z$.
When the superscript of a matrix unit is $1$, we will often suppress it.

We will use Skolem-Noether to characterize the $*$-automorphisms of $M_{n}(\mb{D})$.
\begin{fact}[Skolem-Noether]
  Let $k$ be a field.
  Let $A$ be a finite-dimensional $k$-algebra which is simple and has center $k$.
  Then every automorphism of $A$ is of the form $x \mapsto v^{-1}xv$ for some unit $v \in A$.
\end{fact}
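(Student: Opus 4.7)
The plan is to exploit the fact that $A$, viewed as a left module over $A \otimes_k A^{\mathrm{op}}$ via $(a \otimes b) \cdot x = axb$, is (up to isomorphism) the unique simple module of that algebra. First I would put two $(A,A)$-bimodule structures on $A$ itself: the standard one, and the $\sigma$-twisted one in which $a$ acts on the left as $\sigma(a)$ rather than $a$. Both make $A$ into a left $A \otimes_k A^{\mathrm{op}}$-module of $k$-dimension $n := \dim_k A$.

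Next I would invoke the classical theorem that the tensor product of two central simple $k$-algebras is central simple; applied to $A$ and $A^{\mathrm{op}}$ this shows $A \otimes_k A^{\mathrm{op}}$ is a finite-dimensional simple $k$-algebra, and comparing dimensions it is isomorphic to $\End_k(A) \cong M_n(k)$ via the regular action on $A$. The unique simple module of $M_n(k)$ has $k$-dimension $n$, so both the standard and twisted bimodule structures on $A$ are simple modules of the same dimension, hence isomorphic. Let $\varphi \colon A \to A$ be a $k$-linear bijection implementing such an isomorphism, so that $\varphi(axb) = \sigma(a)\varphi(x)b$ for all $a,b,x \in A$.

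Setting $a = x = 1$ shows that $\varphi$ is right multiplication by the unit $v := \varphi(1)$, while setting $b = x = 1$ gives $\varphi(a) = \sigma(a) v$. Combining these yields $v a = \sigma(a) v$ for every $a \in A$, whence $\sigma(a) = v a v^{-1}$; relabelling $v^{-1}$ as $v$ gives the form $\sigma(a) = v^{-1} a v$ stated in the fact. The main obstacle is establishing the simplicity of $A \otimes_k A^{\mathrm{op}}$, which is the one place where the centrality of $A$ is used essentially; an alternative route via Wedderburn--Artin reduces $A$ to $M_n(D)$ for a central division algebra $D$, but that path still requires separately handling automorphisms of matrix algebras and of division algebras over $k$, so the bimodule argument above is more direct.
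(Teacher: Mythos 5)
Your argument is correct, but note that the paper does not prove this statement at all: it is quoted as a classical \emph{fact} (Skolem--Noether) and used only to derive the corollary about $*$-automorphisms of $M_n(\mb{D})$, so there is no internal proof to compare against. What you give is the standard bimodule proof found in the algebra texts the paper cites: view $A$ as a left $A \otimes_k A^{\mathrm{op}}$-module in the ordinary and in the $\sigma$-twisted way, use that $A \otimes_k A^{\mathrm{op}}$ is simple (indeed $\cong \End_k(A) \cong M_n(k)$, which is where centrality is essential), conclude the two module structures are isomorphic, and read off an inner automorphism. The logic is sound; only two small points deserve cleaning up. First, from $\varphi(axb) = \sigma(a)\varphi(x)b$ with $a = x = 1$ you get $\varphi(b) = \varphi(1)b$, i.e.\ $\varphi$ is \emph{left} multiplication by $v = \varphi(1)$, not right multiplication as you wrote; your subsequent identities $\varphi(a) = va$ and $\varphi(a) = \sigma(a)v$ are the correct ones, so this is only a naming slip. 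Second, you should say in one line why $v$ is a unit: $\varphi$ is bijective, so left multiplication by $v$ is bijective, and in a finite-dimensional algebra this forces $v$ to be invertible. Finally, the twisted action only makes sense over $\otimes_k$ because $\sigma$ is $k$-linear, so the statement should be read (as the paper intends, and as its corollary requires) for $k$-algebra automorphisms.
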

\begin{corollary}\label{SN_cor}
  If the center of $\mb{D}$ is exactly $\mb{K}$, then every $*$-automorphism of $M_{n}(\mb{D})$ is of the form $x \mapsto u^{*}xu$ for some unitary $u \in M_{n}(\mb{D})$.
\end{corollary}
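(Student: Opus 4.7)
The plan is to apply Skolem-Noether directly and then exploit the $*$-preserving hypothesis to normalize the inner-automorphism witness into a unitary. First I observe that under the hypothesis on the center of $\mb{D}$, the algebra $M_n(\mb{D})$ is a finite-dimensional simple $\mb{K}$-algebra with center $\mb{K}$: by \autoref{vndct}-era comments in the preceding subsection, the center of $M_n(\mb{D})$ coincides with the center of $\mb{D}$, which is $\mb{K}$ by assumption. A $*$-automorphism $\varphi$ of $M_n(\mb{D})$ is in particular a $\mb{K}$-algebra automorphism, so Skolem-Noether supplies a unit $v \in M_n(\mb{D})$ with $\varphi(x) = v^{-1} x v$ for every $x$.

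Next I use that $\varphi$ preserves $*$: applying the identity $\varphi(x^*) = \varphi(x)^*$ and replacing $x^*$ by $y$ gives
\[
v^{-1} y v = v^* y (v^*)^{-1} \quad \text{for every } y \in M_n(\mb{D}).
\]
Rearranging yields $y(vv^*) = (vv^*) y$ for every $y$, so $vv^*$ lies in the center of $M_n(\mb{D})$. By the observation above, $vv^* = \lambda \cdot 1$ for some $\lambda \in \mb{K}$. Since $vv^*$ is self-adjoint and positive in the C*-algebra $M_n(\mb{D})$ and is invertible (as $v$ is), $\lambda$ is a strictly positive real number.

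Setting $u = v / \sqrt{\lambda}$, which makes sense because $\sqrt{\lambda} \in \mb{R} \subseteq \mb{K}$, one computes $u u^* = 1$, so $u$ is unitary and $u^{-1} = u^*$. Scalar rescaling does not affect the conjugation action, so
\[
\varphi(x) = v^{-1} x v = u^{-1} x u = u^* x u,
\]
as required.

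The only real subtlety is the verification that $M_n(\mb{D})$ is $\mb{K}$-central simple so that Skolem-Noether applies; once that is in hand, the rest is the standard manipulation turning an inner automorphism witness into a unitary by pushing the $*$-condition through. The case $\mb{D} = \mb{C}$, $\mb{K} = \mb{R}$ is precisely the one excluded by the centrality hypothesis, where one would genuinely need to also allow conjugate-linear witnesses; here we avoid that because the hypothesis forces $\mb{D} \in \{\mb{R}, \mb{H}\}$ when $\mb{K} = \mb{R}$ and $\mb{D} = \mb{C}$ when $\mb{K} = \mb{C}$.
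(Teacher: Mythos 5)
Your proposal is correct and follows essentially the same route as the paper: apply Skolem--Noether to get $\varphi(x)=v^{-1}xv$, push the $*$-condition through to show $vv^{*}$ is central hence a positive scalar, and rescale $v$ by the inverse square root to obtain the unitary (the paper writes this scalar action as right multiplication $R_{a}$, but that is only a notational difference). Nothing further is needed.
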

\begin{proof}
  Let $\varphi$ be a $*$-automorphism of $M_{n}(\mb{D})$.
  It can be observed that $M_{n}(\mb{D})$ is a simple finite-dimensional $\mb{K}$-algebra with center $\mb{K}$, so, by the previous fact, there exists an invertible $v \in M_{n}(\mb{D})$ such that $\varphi(x) = v^{-1}xv$ for $x \in M_{n}(\mb{D})$.
  For $x \in M_{n}(\mb{D})$,
  \[xvv^{*} = v(v^{-1}xv)v^{*} = v\varphi(x)v^{*} = v\varphi(x^{*})^{*}v^{*} = v(v^{*}x(v^{-1})^{*})v^{*} = vv^{*}x.\]
  Hence $vv^{*}$ belongs to the center of $M_{n}(\mb{D})$, so is of the form $R_{a}$ for some $a \in \mb{K}$.
  In fact, since $vv^{*}$ is self-adjoint and nonzero, it must be that $a > 0$.
  Let $b = a^{-1/2}$ and let $u = R_{b} v$.
  Then $uu^{*} = R_{b}vv^{*}R_{b} = R_{b}R_{a}R_{b} = I$.
  By uniqueness of the inverse, $u$ is unitary.
  Furthermore, for $x \in M_{n}(\mb{D})$, $u^*xu = v^{-1}R_{1/b}xR_{b}v = v^{-1}xv = \varphi(x)$.
\end{proof}

\section{C.e.\ presentations and word problems}\label{sec3}

Here we introduce c.e.\ presentations for C*-algebras, borrowing the terminology from~\cite{Melnikov_ComputableAbelian_2014}, and explore their similarity to the group situation.

We consider universal C*-algebras where the set of generators forms a sequence.
Let $\urep{\mb{K}}{\vk{x}}{\mc{R}}$ be a universal C*-algebra over $\mb{K}$, where we identify the elements of $\vk{x}$ with their image under the associated representation.
The standard presentation of $\urep{\mb{K}}{\vk{x}}{\mc{R}}$ is then $(\urep{\mb{K}}{\vk{x}}{\mc{R}}, \vk{x})$, which we simply denote by $\urep{\mb{K}}{\vk{x}}{\mc{R}}$.

On a set of generators, a relation $\norm{p(x_{1},\ldots,x_{n})} \leq a$ is \inlinedef{rational} if $p$ is rational and $a$ is a positive dyadic rational.
\begin{definition}
  A presentation $A^{\dagger}$ of a C*-algebra $A$ over $\mb{K}$ is \inlinedef{c.e.}\ if $A^{\dagger}$ is the standard presentation $\urep{\mb{K}}{\vk{x}}{\mc{R}}$ for some c.e.\ set of rational relations $\mc{R}$.
\end{definition}
If $\vk{x}$ and $\mc{R}$ are finite, we say $A^{\dagger}$ is \inlinedef{finitely c.e.}

There is another c.e.\ notion for presentations that one might consider in any presented metric structure.
We give the definition for C*-algebras, see~\cite{BazhenovEtal_ComputableStone_2021} for the definition for Polish metric spaces.
\begin{definition}
  A presentation $A^{\dagger}$ of a C*-algebra $A$ over $\mb{K}$ is \inlinedef{right-c.e.}\ if there is an effective procedure which, when given a rational point $r$ of $A^{\dagger}$, enumerates a decreasing sequence $(q_{n})_{n \in \mb{N}}$ of rationals such that $\lim_{n \to \infty} q_{n} = \norm{r}$.
\end{definition}

In the following theorem, we show the two notions agree. We use the framework of continuous first-order logic, see~\cite{YaacovPedersen_ProofCompleteness_2010} for an overview of the underlying logic and~\cite{FarahEtal_ModelTheory_2018} for a reference on its applications to complex C*-algebras.
\begin{theorem}\label{c.e. norm}
  Let $A$ be a C*-algebra over $\mb{K}$ with presentation $A^{\dagger}$.
  Then $A^{\dagger}$ is c.e.\ if and only if it is right-c.e.
  Furthermore, if $A^{\dagger} = \urep{\mb{K}}{\vk{x}}{\mc{R}}$ for some c.e.\ set of rational relations $\mc{R}$ in $\vk{x}$, we can effectively determine a procedure which witnesses that $A^{\dagger}$ is right-c.e.\ from a computable enumeration of $\mc{R}$.
\end{theorem}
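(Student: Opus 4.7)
I prove the two implications separately. The easier direction, right-c.e.\ implies c.e., is a direct construction; the harder direction, c.e.\ implies right-c.e., invokes completeness of continuous first-order logic, and this is also where the uniformity claim matters.

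For right-c.e.\ implies c.e., given $A^{\dagger} = (A, \vk{a})$ right-c.e., I define $\mc{R}$ to consist of all rational relations $\norm{p(x_{i_1}, \ldots, x_{i_n})} \leq q$ witnessed by the right-c.e.\ procedure, in the sense that some term of the enumerated decreasing sequence approaching $\norm{p(\vk{a})}$ is $\leq q$, together with bounds $\norm{x_i} \leq M$ for each generator. This set is c.e.\ by dovetailing, and $(A, \vk{a})$ is a representation of $(\vk{x}, \mc{R})$, so by the universal property there is a canonical surjective *-homomorphism $\varphi : \urep{\mb{K}}{\vk{x}}{\mc{R}} \to A$ sending $\iota(x_i) \mapsto a_i$. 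For any rational *-polynomial $p$, norm-decreasingness of $\varphi$ gives $\norm{p(\vk{a})}_A \leq \norm{p(\iota(\vk{x}))}_{\urep{\mb{K}}{\vk{x}}{\mc{R}}}$, while, conversely, every rational $q > \norm{p(\vk{a})}_A$ is eventually enumerated as an upper bound, so $\norm{p} \leq q$ lies in $\mc{R}$ and hence $\norm{p(\iota(\vk{x}))}_{\urep{\mb{K}}{\vk{x}}{\mc{R}}} \leq q$. Thus $\varphi$ is isometric on the dense set of rational points, and the two presentations are canonically identified.

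For c.e.\ implies right-c.e., assume $A^{\dagger} = \urep{\mb{K}}{\vk{x}}{\mc{R}}$ with $\mc{R}$ c.e. The key observation is that $\norm{p(\iota(\vk{x}))}_{A^{\dagger}} = \sup_j \norm{p(j(\vk{x}))}_B$, where $j$ ranges over all representations of $(\vk{x}, \mc{R})$ in C*-algebras $B$ over $\mb{K}$; this holds because every such $j$ factors through $\iota$ via a norm-decreasing *-homomorphism and $\iota$ is itself a representation. So a rational relation $\norm{p} \leq q$ holds in $A^{\dagger}$ if and only if it holds in every representation of $(\vk{x}, \mc{R})$. By the completeness theorem for continuous first-order logic of~\cite{YaacovPedersen_ProofCompleteness_2010}, applied to the continuous theory axiomatizing C*-algebras together with the conditions in $\mc{R}$, this in turn is equivalent to derivability in an effective proof system, and the set of derivable pairs $(p, q)$ is therefore c.e.\ uniformly in an enumeration of $\mc{R}$. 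Given any rational point $r = p(\iota(\vk{x}))$, I enumerate rationals $q$ with $\norm{p} \leq q$ provable and take running minima to produce the required decreasing sequence converging to $\norm{r}$.

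The principal obstacle is the completeness step: establishing that the set of norm upper bounds holding in $A^{\dagger}$ is c.e.\ from $\mc{R}$. The continuous logic route requires translating $\mc{R}$ into continuous first-order axioms and verifying that the hypotheses of the completeness theorem apply to the theory of C*-algebras over $\mb{K}$. Alternatively, the excerpt's remark that universal C*-algebras arise as initial term models of strict universal Horn theories suggests that a direct algebraic proof system (closing $\mc{R}$ under rules encoding the triangle inequality, submultiplicativity, the *-operation, the C*-axiom, and for the real case the inequality $\norm{x}^2 \leq \norm{x^*x + y^*y}$) may admit a Horn-algebraic completeness argument, bypassing continuous logic altogether.
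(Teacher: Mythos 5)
Your proposal is correct and follows essentially the same route as the paper: the right-c.e.\ $\Rightarrow$ c.e.\ direction harvests the enumerated upper bounds as a c.e.\ set of rational relations (you supply the isometry-on-rational-points verification the paper leaves implicit), and the c.e.\ $\Rightarrow$ right-c.e.\ direction enumerates formal deductions from the continuous theory of C*-algebras together with $\mc{R}$, exactly as the paper does, with uniformity in the enumeration of $\mc{R}$ coming from the deduction search. The one caveat is that the completeness invoked is Pavelka-style (the infimum of provably derivable bounds equals the norm), not a bound-by-bound equivalence of semantic truth and derivability as you phrase it, but your running-minima procedure needs only that infimum statement, so the argument goes through.
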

\begin{proof}
  Suppose $A^{\dagger}$ is c.e., so $A^{\dagger}$ is the standard presentation $\urep{\mb{K}}{\vk{x}}{\mc{R}}$ for some c.e.\ set of rational relations $\mc{R}$ in $\vk{x}$.

  Let $T$ be the continuous first-order theory of C*-algebras over $\mb{K}$.
  We expand our language to include additional constants for the generators $\vk{x}$.
  Note if $x_{i}$ is a member of $\vk{x}$, then there is a relation of the form $\norm{x_{i}} \leq M$ in $\mc{R}$, so we can put $x_{i}$ in an appropriate sort.
  Given a rational point $r(\vk{x})$ of $A^{\dagger}$, we enumerate through all formal deductions from $T \cup \mc{R}$.
  For each formal deduction, we output a positive dyadic rational $q_{n}$ if $q_{n} \leq q_{i}$ for all $i < n$ and the formal deduction witnesses that $T \cup \mc{R} \vdash \norm{r(\vk{x})} \leq q_{n}$.
  Note this procedure is defined uniformly in the computable enumeration of $\mc{R}$.
  By Pavelka-style completeness, $\norm{r(\vk{x})}$ is the infimum of all positive dyadic rationals $d$ such that $T \cup \mc{R} \vdash \norm{r(\vk{x})} \leq d$.
  Thus, we have enumerated a decreasing sequence of rationals $(q_{n})_{n \in \mb{N}}$ such that $\lim_{n \to \infty} q_{n} = \norm{r(\vk{x})}$.

  Conversely, suppose $A^{\dagger}$ is right-c.e.
  Then there is an effective procedure which, when given a rational point $r$ of $A^{\dagger}$, enumerates a decreasing sequence $(q_{n})_{n \in \mb{N}}$ of dyadic rationals such that $\lim_{n \to \infty} q_{n} = \norm{r}$.
  Let $\vk{a}$ be such that $A^{\dagger} = (A,\vk{a})$, and let $\vk{x}$ be a sequence of generators of the same length.
  Let $\mc{R}$ be the set of relations $\norm{r(\vk{x})} \leq d$ where $r(\vk{a})$ is a rational point of $A^{\dagger}$ and $d$ is one of the positive dyadic rationals enumerated by the procedure given $r(\vk{a})$.
  Then $\mc{R}$ is c.e., and $A^{\dagger} = \urep{\mb{K}}{\vk{x}}{\mc{R}}$.
\end{proof}

It may interest some that the statement and proof directly generalize from C*-algebras to metric models of a c.e.\ strict universal Horn theory.

Essentially, with~\autoref{c.e. norm}, we have rephrased c.e.\ in terms of the norm so that it mirrors the definition of computable.
From this, it becomes clear that if $A^{\dagger}$ is a computable presentation of a C*-algebra $A$ over $\mb{K}$, then $A^{\dagger}$ is also c.e.

If a finitely presented group is residually finite, then it has solvable word problem, as shown by Dyson in~\cite{Dyson_WordProblem_1964}.
We want to establish an analogous result for C*-algebras.
\begin{definition}
  A C*-algebra over $\mb{K}$ is called \inlinedef{RFD} if its finite-dimensional representations form a separating family.
\end{definition}

In~\cite{FritzEtal_CanYou_2014}, the authors showed that the standard presentation for a universal group C*-algebra of a finitely presented RFD group is computable, and noted that their result generalizes to arbitrary finitely-presented *-algebras.
Their proof works for finitely c.e.\ presentations of C*-algebras over $\mb{K}$ with only a small modification.
We include the proof here to emphasize that their use of semidefinite programming is not required, and that the argument is uniform in the sense we describe.
\begin{theorem}
  If $A$ is an RFD C*-algebra over $\mb{K}$, then any finitely c.e.\ presentation $A^{\dagger} = \urep{\mb{K}}{\vk{x}}{\mc{R}}$ of $A$ is computable.
  Furthermore, from $\mc{R}$ one can effectively determine a procedure which witnesses that the presentation is computable.
\end{theorem}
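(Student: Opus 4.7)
The upper bound side is immediate from \autoref{c.e. norm}: since $A^\dagger$ is c.e., it is right-c.e., so we can effectively enumerate a decreasing sequence of rational upper bounds on $\norm{r}$ converging to $\norm{r}_A$ for every rational point $r$, and this procedure is uniform in the finite data of $\mc{R}$. All of the remaining work is to produce a matching stream of effective lower bounds.

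The lower-bound procedure enumerates finite-dimensional $*$-representations of $A$ and reports the norms they induce on $r$. Concretely, I would enumerate triples $(n, (M_1, \ldots, M_k))$ in which each $M_i$ is an $n \times n$ matrix with algebraic entries (real or complex algebraic according to $\mb{K}$), and test whether $\norm{p_j(M)} \leq a_j$ for each of the finitely many relations $\norm{p_j} \leq a_j$ in $\mc{R}$. This test is decidable, since $\norm{p_j(M)}^2$ is the largest root of the characteristic polynomial of $p_j(M)^{*}p_j(M)$ and is therefore an algebraic real comparable with the rational $a_j^2$. Any passing tuple defines, by universality, a $*$-homomorphism $\pi : A \to M_n(\mb{K})$ with $\pi(x_i) = M_i$, and because $*$-homomorphisms are norm-decreasing we get $\norm{r(M)} = \norm{\pi(r)} \leq \norm{r}_A$; the value $\norm{r(M)}$ is itself algebraic and so can be output to within any prescribed precision.

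The heart of the argument is a density claim: for every $\delta > 0$ there is an algebraic tuple $(M_i)$ satisfying every relation exactly and with $\norm{r(M)} > \norm{r}_A - \delta$. Granting this, the two one-sided enumerations together give the desired effective approximation to $\norm{r}_A$, and uniformity in $\mc{R}$ is automatic since every ingredient, namely enumerating algebraic tuples, checking finitely many relations, and approximating a single algebraic norm, is uniformly effective from the finite data of $\mc{R}$. To establish density I would apply RFD to fix a (possibly irrational) representation $\pi : A \to M_n(\mb{K})$ with $\norm{\pi(r)} > \norm{r}_A - \delta/2$, observe that the representation variety $V_n \subseteq M_n(\mb{K})^k$ cut out by $\mc{R}$ is a compact semi-algebraic set defined over $\mb{Q}$, and combine density of algebraic points in such sets with continuity of $M \mapsto \norm{r(M)}$ to perturb to a nearby algebraic point of $V_n$ still witnessing norm greater than $\norm{r}_A - \delta$.

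The main obstacle I anticipate is precisely this density step, particularly when $(\pi(x_i))$ sits on a boundary of $V_n$ along some relation that holds with equality, so that a naive perturbation may exit $V_n$. The robust fix is to introduce slack $\epsilon > 0$, enumerate rational tuples in the enlarged variety $\{M : \norm{p_j(M)} \leq a_j + \epsilon\}$, and then invoke a compactness-plus-continuity argument to exhibit a genuine representation of $A$ whose norm on $r$ is within $O(\epsilon)$ of the target; letting $\epsilon \to 0$ supplies the required cofinal lower bounds. This slack mechanism is also what replaces any appeal to semidefinite programming, since no optimization is performed, only enumeration controlled by the upper bound.
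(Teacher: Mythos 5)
Your skeleton is the same as the paper's: upper bounds come from \autoref{c.e. norm}, lower bounds come from finite-dimensional matrix tuples satisfying $\mc{R}$, with RFD giving $\norm{r}_A = \sup_n \sup_{V_n}\norm{r(\vk{m})}$ from above and universality (every tuple in $V_n$ induces a norm-decreasing $*$-homomorphism out of $A$) giving it from below; enumerating algebraic tuples and testing the finitely many relations exactly is indeed effective. The trouble is in the step you yourself call the heart. The density of algebraic points in $V_n$ is true, but the right justification is not a perturbation argument: intersect $V_n$ with any rational box it meets; this is a nonempty semialgebraic set defined over $\mb{Q}$, and since the real algebraic numbers form a real closed elementary substructure of $\mb{R}$ (Tarski--Seidenberg/model completeness of RCF), it already contains an algebraic point. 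With that, your boundary worry evaporates and your first procedure is correct. (The paper sidesteps density altogether: it uses effective quantifier elimination to compute the exact maximum $\alpha_{n,q}$ of $\norm{q(\vk{m})\xi}^2$ over the semialgebraic set $X_n \times D_n$, so the lower approximations are the numbers $\alpha_{n,q}^{1/2}$ themselves rather than values at sampled points.)

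The slack mechanism you propose as the ``robust fix,'' however, has a genuine gap. A rational tuple satisfying only the relaxed relations $\norm{p_j} \leq a_j + \epsilon$ induces, by universality, a representation of $\urep{\mb{K}}{\vk{x}}{\mc{R}_\epsilon}$, not of $A$, so $\norm{r(M)}$ is bounded only by the norm of $r$ in that relaxed universal algebra and may strictly exceed $\norm{r}_A$. These outputs are therefore not sound lower bounds, and the stopping rule that compares them with the right-c.e.\ upper bounds can halt with a wrong answer. The compactness-plus-continuity argument only shows that, for each fixed $n$, $\sup_{V_n^\epsilon}\norm{r} \to \sup_{V_n}\norm{r}$ as $\epsilon \to 0$; it gives no effective modulus and no uniformity in $n$, and the claim that a near-representation has a genuine representation within $O(\epsilon)$ is a quantitative stability assertion about arbitrary $*$-relations that fails in general (almost-commuting unitaries are the standard obstruction). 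Since you have no way to know, for a given precision, how small $\epsilon$ must be, the slack procedure does not yield an algorithm. Keep your exact-tuple enumeration, justify density by the transfer principle (or compute the suprema directly by effective quantifier elimination as in the paper), and drop the slack step.
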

\begin{proof}
  We write $\norm{\cdot}_f$ for the norm on the direct sum of all finite-dimensional representations of $A$.
  Since $A$ is RFD, we have $\norm{\cdot}_{A} = \norm{\cdot}_f$.

  For any $n \in \mb{N}$, consider the set $X_n = \{\vk{m} \in M_n(\mb{K})^{|\vk{x}|} \st \mc{R}[\vk{m}] \text{ holds in } M_{n}(\mb{K})\}$ as a subset of $\mb{R}^{k|\vk{x}|n^2}$, where $k$ is the dimension of $\mb{K}$ over $\mb{R}$.
  Note $X_n$ is compact since every relation in $\mc{R}$ is a closed condition and we require for every generator $x_i$ that there is a relation of the form $\norm{x_i} \leq M_i$ in $\mc{R}$.
  Furthermore, $X_n$ is definable in the language of real closed fields since $\mc{R}$ is finite and if $\norm{p(\vk{x})} \leq d$  is a relation in $\mc{R}$, then $\norm{p(\vk{m})} \leq d$ holds in $M_{n}(\mb{K})$ if and only if \[(\forall \lambda \in \mb{R}) [\textstyle\det_n\big(p(\vk{m})^*p(\vk{m}) - \lambda I\big) = 0 \to \lambda \leq d^2].\]
  Let $D_n = \{\xi \in \mb{K}^n : \norm{\xi} \leq 1\}$.
  Fix a rational point $q(\vk{x}) \in A^{\dagger}$.
  Define $F_{n,q} : X_n \times D_n \to \mb{R}$ by $F_{n,q}(\vk{m},\xi) = \norm{q(\vk{m})\xi}^2$, and let $\alpha_{n,q}$ be the maximum value of $F_{n,q}$.
  Following closely, we see there is actually an effective procedure which, when given inputs $n$ and $q$, returns a formula that defines $\alpha_{n,q}$ in the language of real closed fields.
  Applying the effective quantifier elimination given by Tarski-Seidenberg, we see this formula can even be made quantifier-free, so $\alpha_{n,q}$ is computable uniformly in $n$ and $q$.

    Observe $\norm{q(\vk{x})}_A = \norm{q(\vk{x})}_f \leq \sup\{\alpha_{n,q}^{1/2} \st n \in \mb{N}\}$ since every finite-dimensional representation of $A$ can be embedded in $M_n(\mb{K})$ for sufficiently large $n$.
    By the universality of $\urep{\mb{K}}{\vk{x}}{\mc{R}}$, we also have $\norm{q(\vk{x})}_A \geq \sup\{\alpha_{n,q}^{1/2} \st n \in \mb{N}\}$.
    Hence $\norm{q(\vk{x})}_A = \sup\{\alpha_{n,q}^{1/2} : n \in \mb{N}\}$, so along with \autoref{c.e. norm} we can conclude that $\norm{q(\vk{x})}$ is computable uniformly in $q$.
    Thus $A^\dagger$ is computable.
  This procedure is uniform in $\mc{R}$ since the formula which defines $\alpha_{n,q}$ can be effectively determined uniformly in $\mc{R}$, and the procedure from \autoref{c.e. norm} is uniform in $\mc{R}$.
\end{proof}

We would like to study word problems associated to presentations of C*-algebras.
Although the category of C*-algebras over $\mb{K}$ does not admit free objects, we can recover a lot of their utility for word problems by considering universal contraction algebras.

\begin{definition}
  For $n \in \mb{N}$, we let $\contr{\mb{K}}{n}$ denote $\urep{\mb{K}}{c_1,\ldots,c_n}{\norm{c_j} \leq 1}$, the \inlinedef{universal contraction C*-algebra} over $\mb{K}$ on $n$ generators.
  Similarly, we let $\contr{\mb{K}}{\omega}$ denote $\urep{\mb{K}}{\{c_{j} \st j \in \mb{N}\}}{\norm{c_{j}} \leq 1}$, the universal contraction C*-algebra over $\mb{K}$ on infinitely many generators.
\end{definition}
To avoid confusion, we will always use $\vk{c}$ to refer to the generators of a universal contraction C*-algebra.

In order to study the computability properties of subsets of the standard presentation $\contr{\mb{K}}{n}$, we first need to establish that the presentation is computable.
The following is a standard fact, see~\cite{Loring_LiftingSolutions_1997} for details.
\begin{fact}
  $\contr{\mb{K}}{n}$ is RFD for every $n \in \mb{N} \cup \{\omega\}$.
\end{fact}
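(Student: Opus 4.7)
The plan is to prove the characterization that finite-dimensional $*$-representations separate points, which for a universal C*-algebra amounts to showing that $\norm{p(\vk{c})}$ is the supremum of $\norm{p(F_1,\ldots,F_n)}$ over finite-dimensional contraction tuples, for every rational $*$-polynomial $p$. The infinite-generator case reduces to the finite case: any such $p$ involves only finitely many generators, and any representation of $\contr{\mb{K}}{k}$ extends to one of $\contr{\mb{K}}{\omega}$ by sending the remaining $c_j$ to $0$, with the reverse direction given by restriction.

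For finite $n$, the universal property of $\contr{\mb{K}}{n}$ gives $\norm{p(\vk{c})} = \sup \norm{p(T_1,\ldots,T_n)}$ over all $n$-tuples $(T_i)$ of contractions on Hilbert spaces $\mc{H}$ over $\mb{K}$. Given $\epsilon > 0$ I would pick such a tuple and a unit vector $\xi \in \mc{H}$ with $\norm{p(T)\xi} > \norm{p(\vk{c})} - \epsilon$, then apply a cyclic-subspace compression: let $\mc{V} \subseteq \mc{H}$ be the finite-dimensional subspace spanned by the vectors $w(T)\xi$ as $w$ ranges over all words of length at most $\deg(p)$ in $T_1, T_1^*, \ldots, T_n, T_n^*$, let $P$ be the orthogonal projection onto $\mc{V}$, and set $F_i = PT_iP \in \blo{\mb{K}}{\mc{V}}$. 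Each $F_i$ is a contraction and $\blo{\mb{K}}{\mc{V}}$ is a finite-dimensional C*-algebra over $\mb{K}$, so by universality the tuple $(F_i)$ determines a finite-dimensional representation $\pi$ of $\contr{\mb{K}}{n}$.

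The heart of the argument is a short induction showing $w(F)\xi = w(T)\xi$ for every word $w$ of length at most $\deg(p)$: if equality holds for some $w'$ of length $k < \deg(p)$, then $w'(T)\xi \in \mc{V}$ so $Pw'(T)\xi = w'(T)\xi$, and $T_iw'(T)\xi$ (respectively $T_i^*w'(T)\xi$) is a length-$(k+1)$ word applied to $\xi$ and hence also lies in $\mc{V}$, giving $F_iw'(F)\xi = PT_iPw'(T)\xi = T_iw'(T)\xi$ and analogously for $F_i^* = PT_i^*P$. Summing over monomials yields $p(F)\xi = p(T)\xi$, so $\norm{\pi(p(\vk{c}))} = \norm{p(F)} \geq \norm{p(T)\xi} > \norm{p(\vk{c})} - \epsilon$; letting $\epsilon \to 0$ gives the desired identity and hence the RFD property.

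The main obstacle is just the bookkeeping in the compression: $\mc{V}$ must contain every length-$\deg(p)$ word applied to $\xi$, not merely length-$(\deg(p)-1)$, so that the final application of $T_i$ or $T_i^*$ in the induction still lies in $\mc{V}$ and is fixed by $P$. A minor technicality in the real case is that $\blo{\mb{R}}{\mc{V}}$ should be recognized as a finite-dimensional real C*-algebra, which is immediate since $\mc{V}$ is a finite-dimensional real Hilbert space and $\blo{\mb{R}}{\mc{V}} \cong M_{\dim \mc{V}}(\mb{R})$.
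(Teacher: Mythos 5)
Your proof is correct and matches the standard argument: the paper itself gives no proof of this fact (it defers to Loring's book), and the proof found there is precisely the cyclic-subspace compression you describe, truncating each $T_i$ to $PT_iP$ on the span of all words of length at most $\deg(p)$ in the $T_i, T_i^*$ applied to a near-maximizing unit vector $\xi$, so that $p(F)\xi = p(T)\xi$. The only step you leave implicit, and it is routine, is that proving $\norm{p(\vk{c})} = \sup\norm{p(F_1,\ldots,F_n)}$ over finite-dimensional contraction tuples for the dense *-subalgebra of *-polynomials suffices: the finite-dimensional supremum is a C*-seminorm dominated by the universal norm, hence $1$-Lipschitz, so the equality extends to all of $\contr{\mb{K}}{n}$ and the direct sum of finite-dimensional representations is isometric, which is the RFD property.
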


Together with the previous theorem, we have the following.
\begin{corollary}
 The standard presentation $\contr{\mb{K}}{n}$ is computable for every $n \in \mb{N}$.
\end{corollary}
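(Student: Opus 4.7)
The plan is to simply combine the preceding theorem with the preceding fact, since this corollary is essentially a direct invocation of them. First, I would observe that for $n \in \mb{N}$, the standard presentation $\contr{\mb{K}}{n} = \urep{\mb{K}}{c_1,\ldots,c_n}{\norm{c_j} \leq 1}$ is finitely c.e.: the generating sequence $c_1,\ldots,c_n$ is finite, and the set of relations $\{\norm{c_j} \leq 1 \st 1 \leq j \leq n\}$ is finite and consists of rational relations (the bound $1$ is a positive dyadic rational and the relevant *-polynomials are just $c_j$ themselves, which are trivially rational with no constant term).

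Second, by the cited fact, $\contr{\mb{K}}{n}$ is RFD. Therefore the hypotheses of the previous theorem are satisfied, and we conclude that the standard presentation $\contr{\mb{K}}{n}$ is computable. There is no obstacle here; the work was done in establishing the general theorem about finitely c.e.\ presentations of RFD C*-algebras, and in verifying that universal contraction C*-algebras are RFD. One might additionally note, using the uniformity clause of the theorem, that the witnessing procedure can be obtained uniformly in $n$, since from $n$ we can effectively produce the finite set $\mc{R} = \{\norm{c_j} \leq 1 \st 1 \leq j \leq n\}$.
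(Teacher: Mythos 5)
Your proof is correct and matches the paper's approach exactly: the paper likewise deduces the corollary by noting the standard presentation is finitely c.e.\ and applying the preceding theorem to the fact that $\contr{\mb{K}}{n}$ is RFD. Your remark on uniformity in $n$ also agrees with the paper's subsequent comment.
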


Not only that, but the effective procedure which witnesses that $\contr{\mb{K}}{n}$ is computable is uniform in $n$.
If $p(c_{1},\ldots,c_{k})$ is a rational point of $\contr{\mb{K}}{\omega}$, then $\norm{p(c_{1},\ldots,c_{k})}_{\contr{\mb{K}}{\omega}} = \norm{p(c_{1},\ldots,c_{k})}_{\contr{\mb{K}}{k}}$, so we have the following.

\begin{corollary}
  The standard presentation $\contr{\mb{K}}{\omega}$ is computable.
\end{corollary}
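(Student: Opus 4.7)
The plan is to reduce any norm computation in $\contr{\mb{K}}{\omega}$ to a norm computation in some $\contr{\mb{K}}{n}$ and then invoke the uniformity of the previous corollary.

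First, given a rational point $r$ of $\contr{\mb{K}}{\omega}$, we effectively extract from its syntactic description the (finite) list of indices $j_1 < \cdots < j_m$ of generators that actually occur, so that $r = p(c_{j_1},\ldots,c_{j_m})$ for a rational noncommutative $*$-polynomial $p$ with no constant term. Next, I would justify the norm identity $\norm{p(c_{j_1},\ldots,c_{j_m})}_{\contr{\mb{K}}{\omega}} = \norm{p(c_1,\ldots,c_m)}_{\contr{\mb{K}}{m}}$ already noted in the body of the paper: by the universal property of $\contr{\mb{K}}{m}$, the assignment $c_i \mapsto c_{j_i}$ extends to a $*$-homomorphism $\contr{\mb{K}}{m} \to \contr{\mb{K}}{\omega}$, giving the inequality $\norm{p(c_{j_1},\ldots,c_{j_m})}_{\contr{\mb{K}}{\omega}} \leq \norm{p(c_1,\ldots,c_m)}_{\contr{\mb{K}}{m}}$; conversely, by the universal property of $\contr{\mb{K}}{\omega}$, sending $c_{j_i} \mapsto c_i$ and every other generator to $0$ gives a $*$-homomorphism $\contr{\mb{K}}{\omega} \to \contr{\mb{K}}{m}$ and the reverse inequality. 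Since $*$-homomorphisms are norm-decreasing, the two norms agree.

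Finally, I would apply the previous corollary, which computes the norm of any rational point of $\contr{\mb{K}}{n}$ uniformly in $n$, to the input $(m, p(c_1,\ldots,c_m))$. Composing this with the effective extraction of $m$ and $p$ from $r$ gives an effective procedure that, on input $r$ and $k \in \mb{N}$, outputs a rational within $2^{-k}$ of $\norm{r}_{\contr{\mb{K}}{\omega}}$. There is no real obstacle here: the argument is essentially bookkeeping, and the only nontrivial ingredient is the uniformity in $n$ of the prior corollary, which was already remarked on, together with the norm-identification between $\contr{\mb{K}}{m}$ and the subalgebra of $\contr{\mb{K}}{\omega}$ generated by any $m$ of its generators.
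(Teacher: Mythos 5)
Your proposal is correct and is essentially the paper's own argument: the corollary is justified there by exactly the two ingredients you use, namely the uniformity in $n$ of the procedure witnessing computability of $\contr{\mb{K}}{n}$ and the identity $\norm{p(c_{1},\ldots,c_{k})}_{\contr{\mb{K}}{\omega}} = \norm{p(c_{1},\ldots,c_{k})}_{\contr{\mb{K}}{k}}$. Your spelling-out of that norm identity via the two universal-property maps (and norm-decrease of $*$-homomorphisms) is a correct filling-in of a step the paper leaves implicit.
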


Now we are in a position to define the word problem.
When considering the word problem, it is convenient if we assume every presentation $(A,\vk{a})$ satisfies $\norm{a_j} \leq 1$ for all $j$.
\begin{definition}
  Let $A$ be a C*-algebra over $\mb{K}$ with presentation $(A,\vk{a})$.
  The \inlinedef{word problem} of $(A,\vk{a})$ is the kernel of the natural quotient map from $\contr{\mb{K}}{|\vk{a}|}$ onto $A$.
\end{definition}

As with groups, there are relationships between the word problem and the presentation.
\begin{theorem}\label{c.e. wp}
  Let $A$ be a C*-algebra over $\mb{K}$ with presentation $A^{\dagger}$.
  The following are equivalent:
  \begin{enumerate}[label= (\alph*)]
    \item $A^{\dagger}$ is the standard presentation $\urep{\mb{K}}{\vk{x}}{\mc{S}}$ where $\mc{S}$ is a computable set of relations,
    \item the word problem of $A^{\dagger}$ is c.e.\ closed,
    \item $A^\dagger$ is a c.e.\ presentation.
  \end{enumerate}
\end{theorem}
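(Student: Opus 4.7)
The plan is to prove the cycle among $(a)$, $(b)$, $(c)$ by reducing everything to a single norm characterization. The trivial part is $(a)\Rightarrow(c)$: any decidable set of relations is in particular c.e. The real content sits in $(c)\Leftrightarrow(b)$ and in the harder implication $(c)\Rightarrow(a)$. The key bridge I will prove first is that for any rational point $r(\vk{c})$ of the fixed computable presentation $\contr{\mb{K}}{|\vk{a}|}$ and any positive dyadic rational $q$, the open rational ball $B(r(\vk{c}),q)$ meets the word problem $\ker\pi$ if and only if $\norm{r(\vk{a})}_A<q$. This is immediate from the C*-algebra quotient formula $\norm{\pi(x)}=\inf_{k\in\ker\pi}\norm{x-k}$, which itself is a short consequence of \autoref{norm lifts}.

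From this norm characterization, $(b)$ unfolds as: the set $\{(r,q):\norm{r(\vk{a})}_A<q\}$ is c.e., i.e., the norm on rational points of $A^\dagger$ is right-c.e. By \autoref{c.e. norm}, this is equivalent to $A^\dagger$ being c.e., giving $(c)\Leftrightarrow(b)$.

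For $(c)\Rightarrow(a)$, I will use \autoref{c.e. norm} to obtain, uniformly in a rational *-polynomial $p$ in $\vk{x}$, a decreasing sequence $(q_{p,n})_{n\in\mb{N}}$ of positive dyadic rationals converging to $\norm{p(\vk{a})}_A$. Then I will define $\mc{S}$ to be the set of rational relations $\norm{p(\vk{x})}\leq d$ such that $q_{p,n}\leq d$ for some $n\leq|(p,d)|$, where $|(p,d)|$ is the length of a fixed computable encoding of the pair. Membership in $\mc{S}$ is a finite search, hence decidable, and each relation in $\mc{S}$ is valid in $A$ since $q_{p,n}\geq\norm{p(\vk{a})}_A$. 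The crux of the verification is that $\inf\{d:(p,d)\in\mc{S}\}=\norm{p(\vk{a})}_A$ for each $p$: given $\epsilon>0$, I pick $n$ with $q_{p,n}<\norm{p(\vk{a})}_A+\epsilon/2$ and then $K\geq n$ with $2^{-K}<\epsilon/2$, and set $d:=q_{p,n}+2^{-K}$; the standard dyadic encoding of $d$ has length at least $K\geq n$, so $(p,d)\in\mc{S}$ with $d<\norm{p(\vk{a})}_A+\epsilon$. Combining this with the universal property and the norm-decreasing nature of *-homomorphisms forces the canonical map $\urep{\mb{K}}{\vk{x}}{\mc{S}}\to A$ to be isometric on rational points, hence an isomorphism.

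The main obstacle is this last construction. Rational relations have essentially no syntactic slack once polynomials are placed in normal form, so the usual Craig-style syntactic padding is not available. The role of the padding is played instead by the artificial bound $n\leq|(p,d)|$ on the depth of the witness search, which makes $\mc{S}$ decidable yet leaves enough room to include upper bounds $d$ arbitrarily close to $\norm{p(\vk{a})}_A$, provided by the $2^{-K}$ inflation of the encoding length.
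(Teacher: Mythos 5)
Your proposal is correct, and its overall skeleton matches the paper's: (a)$\implies$(c) is immediate, and (b)$\iff$(c) is obtained exactly as in the paper, by observing that an open rational ball $B(r(\vk{c}),q)$ meets the word problem precisely when $\norm{r(\vk{a})}_A<q$ (the paper asserts this; your derivation via the quotient-norm identity is a fine justification, and note that for \emph{open} balls you only need the quotient-norm equality, not the attainment from \autoref{norm lifts}) and then invoking \autoref{c.e. norm} to pass between right-c.e.\ and c.e. Where you genuinely diverge is (c)$\implies$(a). The paper does use the standard Craig padding after all: it treats *-polynomials as syntactic expressions and replaces $\norm{p(\vk{x})}\leq d$ by $\norm{p(\vk{x})+k0}\leq d$ with $k$ encoding the enumeration witness, so the relation set changes only syntactically and presents the same algebra for free. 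Your claim that this padding is ``not available'' is really a statement about your normal-form convention, not an obstruction in the paper's setting. Your substitute—bounding the witness search by the encoding length of $(p,d)$ and inflating $d$ by $2^{-K}$ so that admissible bounds with long encodings accumulate down to $\norm{p(\vk{a})}_A$—does work, at the cost of an extra verification the paper avoids: you must check $\inf\{d:(p,d)\in\mc{S}\}=\norm{p(\vk{a})}_A$ and then argue, via universality and the norm-decreasing property of *-homomorphisms, that the canonical surjection $\urep{\mb{K}}{\vk{x}}{\mc{S}}\to A$ is isometric on rational points, hence an isomorphism; that argument is sound (and you should also note, as your construction indeed provides, that $\mc{S}$ contains a bound $\norm{x_i}\leq M$ for each generator so that it is a legitimate relation set). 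One small repair: the claim that the dyadic encoding of $d=q_{p,n}+2^{-K}$ has length at least $K$ can fail if $q_{p,n}$ already has dyadic precision at least $K$ (e.g.\ $3/4+1/4=1$); simply choose $K$ larger than both $n$ and the precision of $q_{p,n}$, which makes the reduced denominator exactly $2^{K}$ and restores the estimate. In summary: your route buys robustness against a normal-form reading of relations, while the paper's syntactic padding buys a one-line verification that the presented algebra is unchanged.
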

\begin{proof}
  (a) $\implies$ (c) clearly holds.

  (c) $\implies$ (a). This follows by the standard argument known as Craig's trick.
  Since $A^{\dagger}$ is c.e., $A^{\dagger}$ is the standard presentation $\urep{\mb{K}}{\vk{x}}{\mc{R}}$ for some c.e.\ set of rational relations $\mc{R}$.
  For every relation $\norm{p(\vk{x})} \leq d$ in $\mc{R}$, include the relation $\norm{p(\vk{x}) + k0} \leq d$ in $\mc{S}$, where $k$ encodes a Turing computation which witnesses that $\norm{p(\vk{x})} \leq d$ belongs to $\mc{R}$.
  Then $\mc{S}$ is computable and $A^{\dagger}$ is the standard presentation $\urep{\mb{K}}{\vk{x}}{\mc{S}}$.

  Let $\vk{a}$ be such that $A^{\dagger} = (A,\vk{a})$, and let $\vk{c}$ be the corresponding sequence of generators for $\contr{\mb{K}}{|\vk{a}|}$.
  Let $N$ be the word problem of $A^{\dagger}$.

  (b) $\implies$ (c).
  Given a rational point $r(\vk{a})$ of $A^{\dagger}$, we enumerate the set of all open rational balls of $\contr{\mb{K}}{|\vk{a}|}$ that intersect $N$.
  For each rational ball, we output a positive dyadic rational $q_{n}$ if $q_{n} \leq q_{i}$ for all $i < n$, and the open rational ball is of the form $B(r(\vk{c}), q_{n})$.
  Note $B(r(\vk{c}),q_{n})$ intersects $N$ if and only if $\norm{r(\vk{a})} < q_{n}$.
  Then $(q_{n})_{n \in \mb{N}}$ is a decreasing sequence of rationals such that $\lim_{n \to \infty} q_{n} = \norm{r(\vk{a})}$.
  By~\autoref{c.e. norm}, we conclude the presentation $A^{\dagger}$ is c.e.

  (c) $\implies$ (b).
  We enumerate through all rational points of $\contr{\mb{K}}{|\vk{a}|}$ and all positive dyadic rationals.
  For each rational point $r(\vk{c})$ and positive dyadic rational $d$, we begin an enumeration of the decreasing sequence $(q_{n})_{n \in \mb{N}}$ of rationals determined by \autoref{c.e. norm} on input $r(\vk{a})$, and output $B(r(\vk{c}),d)$ if ever $q_{n} < d$ for some $n \in \mb{N}$.
  Since $B(r(\vk{c}),d)$ intersects $N$ if and only if $\norm{r(\vk{a})} < d$, we have shown $N$ is c.e.\ closed.
\end{proof}

We can even define when a presentation is computable in terms of the word problem.
\begin{theorem}\label{computable wp}
  Let $A$ be a C*-algebra over $\mb{K}$ with presentation $A^\dagger$.
  Then $A^{\dagger}$ is computable if and only if the word problem of $A^{\dagger}$ is strongly computable closed.
\end{theorem}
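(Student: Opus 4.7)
The plan is to reduce both directions to the C*-algebraic identity $\norm{r(\vk{a})}_A = d(r(\vk{c}), N)$, where $\vk{c}$ denotes the contraction generators, $\pi \colon \contr{\mb{K}}{|\vk{a}|} \to A$ is the canonical quotient, and $N = \ker \pi$ is the word problem. This identity holds because $\pi$ is a surjective $*$-homomorphism (its image is a closed $*$-subalgebra containing each $a_i$), so $A \cong \contr{\mb{K}}{|\vk{a}|}/N$ isometrically via $\pi$. Moreover, \autoref{norm lifts} guarantees the infimum defining $d(r(\vk{c}), N)$ is attained by some $n \in N$, so a closed rational ball $\overline{B}(r(\vk{c}), d)$ misses $N$ if and only if $\norm{r(\vk{a})} > d$.

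For the forward direction, I would assume $A^\dagger$ is computable. Computability implies $A^\dagger$ is c.e., so \autoref{c.e. wp} already gives that $N$ is c.e.\ closed. To obtain strong co-c.e.\ closedness, I would dovetail over all pairs $(r,d)$ with $r$ a rational point of $\contr{\mb{K}}{|\vk{a}|}$ and $d$ a positive dyadic rational, computing $2^{-k}$-approximations $q_k$ to $\norm{r(\vk{a})}$ with ever larger $k$; once some $q_k - 2^{-k} > d$ we output $\overline{B}(r(\vk{c}), d)$. By the equivalence above, this procedure enumerates exactly the closed rational balls disjoint from $N$.

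For the reverse direction, I would assume $N$ is strongly computable closed. Its c.e.\ closed half, combined with \autoref{c.e. wp}, yields a decreasing rational sequence converging to $\norm{r(\vk{a})}$ from above. Its strong co-c.e.\ closed half enumerates the positive dyadic rationals $d$ with $\overline{B}(r(\vk{c}), d) \cap N = \emptyset$, which by the equivalence are exactly the $d < \norm{r(\vk{a})}$, giving approximations from below whose supremum is $\norm{r(\vk{a})}$. Running both enumerations in parallel yields a rational within $2^{-k}$ of $\norm{r(\vk{a})}$ in finite time, so $A^\dagger$ is computable. The main (modest) subtlety is the boundary case $\norm{r(\vk{a})} = d$: without attainment of the quotient distance one would only get $\overline{B}(r(\vk{c}), d) \cap N = \emptyset \Leftrightarrow \norm{r(\vk{a})} \geq d$, which would not line up with semi-decidability; this is precisely where \autoref{norm lifts} enters, and it is the only place the argument uses anything beyond the formal properties of the quotient.
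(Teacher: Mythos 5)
Your proposal is correct and takes essentially the same approach as the paper: both directions rest on using \autoref{norm lifts} to identify $\norm{r(\vk{a})}$ with the attained distance from $r(\vk{c})$ to $N$ (so $\widehat{B}(r(\vk{c}),d)\cap N=\emptyset$ iff $\norm{r(\vk{a})}>d$), the forward enumeration of closed balls below a verified lower bound is the paper's, and your reverse direction is the paper's parallel upper/lower approximation, merely routing the upper bounds through \autoref{c.e. wp} instead of enumerating open balls meeting $N$ directly. The only point you (and the paper) gloss over is the boundary case $\norm{r(\vk{a})}=0$, where no closed rational ball misses $N$ and the procedure must fall back on the trivial lower bound $0$ — an immediate fix.
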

\begin{proof}
  Let $\vk{a}$ be such that $A^{\dagger} = (A,\vk{a})$, and let $\vk{c}$ be the corresponding sequence of generators for $\contr{\mb{K}}{|\vk{a}|}$.
  Let $N$ be the word problem of $A^{\dagger}$.

  Suppose $A^{\dagger}$ is a computable presentation.
  Then $A^{\dagger}$ is a c.e.\ presentation, so $N$ is c.e.\ closed by \autoref{c.e. wp}.
  We show $N$ is strongly co-c.e.\ closed.
  We enumerate through all rational points $r(\vk{a})$ of $A^{\dagger}$, positive dyadic rationals $d$, and $k \in \mb{N}$.
  The computable presentation $A^{\dagger}$ determines a rational $q$ such that $|\norm{r(\vk{a})} - q| < 2^{-k}$.
  If $d \leq q - 2^{-k}$, then we output the closed rational ball $\widehat{B}(r(\vk{c}), d)$.
  If $d$ is a positive dyadic rational such that $\widehat{B}(r(\vk{c}), d) \cap N = \emptyset$, then $\norm{r(\vk{a})} > d$ by \autoref{norm lifts}.
  So, if $k \in \mb{N}$ is such that $2^{-k+1} < \norm{r(\vk{a})} - d$, then $d \leq q - 2^{-k}$ for any rational $q$ such that $|\norm{r(\vk{a})} - q| < 2^{-k}$.
  Hence $\widehat{B}(r(\vk{c}), d)$ is eventually output.

  Conversely, suppose $N$ is strongly computable closed.
  If $B(r(\vk{c}),d)$ is an open rational ball, then it intersects $N$ if and only if $\norm{r(\vk{a})} < d$.
  Similarly, if $\widehat{B}(r(\vk{c}),e)$ is a closed rational ball, then $\widehat{B}(r(\vk{c}),e) \cap N = \emptyset$ if and only if $\norm{r(\vk{a})} > e$ by \autoref{norm lifts}.
  We define an effective procedure which witnesses that $A^{\dagger}$ is computable.
  Given a rational point $r(\vk{a}) \in A^{\dagger}$ and $k \in \mb{N}$, we begin enumerating all open rational balls $B(r(\vk{c}), d_{i})$, centered at $r(\vk{c})$, that intersect $N$.
  We also begin enumerating all closed rational balls $\widehat{B}(r(\vk{c}),e_{j})$, centered at $r(\vk{c})$, such that $\widehat{B}(r(\vk{c}),e_{j}) \cap N = \emptyset$.
  If ever $d_{i} - e_{j} < 2^{-k+1}$ for some $i,j \in \mb{N}$, we return $\frac{1}{2}(d_{i} - e_{j})$.
\end{proof}

Consequently, if $A^{\dagger}$ is a computable presentation of a C*-algebra $A$ over $\mb{K}$, then the word problem of $A^{\dagger}$ is computable closed.
In the group situation, we know the converse holds.

\begin{question}
  Is there a presentation $A^{\dagger}$ of a C*-algebra $A$ over $\mb{K}$ such that $A^{\dagger}$ is not computable but has computable closed word problem?
\end{question}

In~\cite{Kuznetsov_AlgorithmsOperations_1958}, Kuzntsov proved that a recursively presented simple group has solvable word problem. Analogously, we have the following theorem.
\begin{theorem}\label{thm: simple computable rup implies computable}
  If $A$ is a simple C*-algebra over $\mb{K}$, then any c.e.\ presentation $A^{\dagger}$ of $A$ is computable.
\end{theorem}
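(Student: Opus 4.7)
The plan is to adapt Kuznetsov's original strategy for simple recursively presented groups: for a candidate lower bound on the norm, exploit simplicity to collapse a suitable quotient to zero, then certify the collapse by a single syntactic proof via Pavelka-style completeness. The zero C*-algebra is trivially computable, so we may assume $A \neq 0$; and by~\autoref{c.e. norm}, the c.e.\ presentation $A^\dagger = \urep{\mb{K}}{\vk{x}}{\mc{R}}$ is right-c.e., which already yields c.e.\ upper bounds on norms of rational points. It remains to produce c.e.\ lower bounds: enumerate the pairs $(r, d)$, with $r$ a rational point of $A^\dagger$ and $d$ a positive dyadic rational, for which $\norm{r(\vk{a})}_A > d$.

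Since $A \neq 0$ is generated by $\vk{a}$, we can fix some generator $a_{j_0}$ with $\norm{a_{j_0}}_A > 0$ and then a positive dyadic rational $\epsilon < \norm{a_{j_0}}_A$. These choices are nonuniform, but this is harmless because the theorem only asks for, for each fixed presentation, a single computable norm oracle. Given a candidate pair $(r, d)$, let $\mc{R}' = \mc{R} \cup \{\norm{r(\vk{x})} \leq d\}$ and set $A' = \urep{\mb{K}}{\vk{x}}{\mc{R}'}$, which is again c.e.\ presented. Since $A$ satisfies $\mc{R}$, the universal property of $A$ supplies a $*$-homomorphism $A \to A'$ carrying each $a_j$ to its image in $A'$; this is surjective because $A'$ is generated by $\vk{x}$, and its kernel is a closed two-sided ideal of $A$, hence $\{0\}$ or $A$ by simplicity. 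Therefore $A' \cong A$ precisely when $\norm{r(\vk{a})}_A \leq d$, and $A' = 0$ precisely when $\norm{r(\vk{a})}_A > d$; correspondingly, $\norm{x_{j_0}}_{A'}$ equals $\norm{a_{j_0}}_A > \epsilon$ in the first case and $0 < \epsilon$ in the second.

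By the Pavelka-style completeness invoked in the proof of~\autoref{c.e. norm}, the norm $\norm{x_{j_0}}_{A'}$ equals the infimum of the positive dyadic rationals $d'$ for which $T \cup \mc{R}' \vdash \norm{x_{j_0}} \leq d'$. So $\norm{x_{j_0}}_{A'} < \epsilon$ if and only if there is a formal deduction $T \cup \mc{R} \cup \{\norm{r(\vk{x})} \leq d\} \vdash \norm{x_{j_0}} \leq \epsilon$, and the existence of such a deduction is c.e.\ in $(r, d)$. By the dichotomy above, this deduction exists exactly when $\norm{r(\vk{a})}_A > d$, giving the desired c.e.\ enumeration of lower bounds; combined with the c.e.\ upper bounds, $r \mapsto \norm{r(\vk{a})}_A$ is computable, so $A^\dagger$ is computable. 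The main subtlety is that a naive translation of Kuznetsov would try to detect $A' = 0$ by showing every generator has norm zero in $A'$, which is only $\Pi^0_2$ via positive-dyadic approximations; the crucial move is to use a single test element $x_{j_0}$ with a fixed positive rational separation $\epsilon$ from zero, converting detection of collapse into a single $\Sigma^0_1$ proof search.
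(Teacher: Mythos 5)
Your proposal is correct and is essentially the paper's argument: both fix, nonuniformly, a test rational point with norm above a fixed dyadic threshold (your generator $a_{j_0}$ with $\epsilon < \norm{a_{j_0}}$ versus the paper's $q(\vk{x})$ with $\norm{q(\vk{x})} > \ell$), use simplicity to get the dichotomy that $\urep{\mb{K}}{\vk{x}}{\mc{R} \cup \{\norm{r(\vk{x})} \leq d\}}$ is either $A$ or $\{0\}$, and detect collapse by a Pavelka-completeness proof search, combined with the right-c.e.\ upper bounds from \autoref{c.e. norm}. The only difference is bookkeeping: you package the conclusion as ``c.e.\ upper plus c.e.\ lower bounds give computability,'' while the paper tests each enumerated upper bound $q_n$ against $q_n \dotminus 2^{-k}$ directly.
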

\begin{proof}
  Let $\mc{R}$ be a c.e.\ set of rational relations such that $A^{\dagger}$ is $\urep{\mb{K}}{\vk{x}}{\mc{R}}$.

  Let $T$ be the continuous first-order theory of C*-algebras over $\mb{K}$.
  We expand our language to include additional constants for the generators $\vk{x}$.
  Fix a rational point $q(\vk{x})$ of $A^{\dagger}$ and a positive dyadic rational $\ell$ such that $\norm{q(\vk{x})} > \ell$.
  Since $A$ is simple, if $r(\vk{x})$ is a rational point of $A^{\dagger}$ and $d$ is a positive dyadic rational, then $\urep{\mb{K}}{\vk{x}}{\mc{R} \cup \{\norm{r(\vk{x})} \leq d\}}$ is just $A$ if $\norm{r(\vk{x})} \leq d$ and $\{0\}$ if $\norm{r(\vk{x})} > d$.
  Hence by Pavelka-style completeness, $\norm{r(\vk{x})} > d$ if and only if $T \cup \mc{R} \cup \{\norm{r(\vk{x})} \leq d\} \vdash \norm{q(\vk{x})} \leq \ell$.

  We define an effective procedure which witnesses that $A^{\dagger}$ is computable.
  Given a rational point $r(\vk{x}) \in A^\dagger$ and $k \in \mb{N}$, we apply \autoref{c.e. norm} and enumerate a decreasing sequence $(q_n)_{n \in \mb{N}}$ of positive dyadic rationals such that $\lim_{n \to \infty} q_n = \norm{r(\vk{x})}$.
  Let $a \dotminus b$ be $a - b$ if $a \geq b$ and $0$ otherwise for $a,b \in \mb{R}$.
  For each $q_{n}$, we begin enumerating through all formal deductions from $T \cup \mc{R} \cup \{\norm{r(\vk{x})} \leq q_{n} \dotminus 2^{-k}\}$, and we return $q_{n}$ if the formal deduction witnesses that $T \cup \mc{R} \cup \{\norm{r(\vk{x})} \leq q_{n} \dotminus 2^{-k}\} \vdash \norm{q(\vk{x})} \leq \ell$.
\end{proof}

The computability of several standard presentations follows as a direct consequence.

\begin{definition}
  For $2 \leq n < \infty$, the \inlinedef{Cuntz algebra} $\mc{O}(n;\mb{K})$ over $\mb{K}$ is the universal C*-algebra given by \[\urep{\mb{K}}{s_{1},\ldots,s_{n},1}{\Iden(1;s_{1},\ldots,s_{n}) \cup \mc{R}},\]
  where \[\mc{R} = \{s_i^*s_j = \delta_{ij}1 \st i,j \leq n\}\cup\{s_{1}s_{1}^* + \cdots + s_{n}s_{n}^{*}= 1\}.\]
  We also consider $n = \infty$ and define $\mc{O}(\infty;\mb{K})$ to be the universal C*-algebra given by \[\urep{\mb{K}}{\{s_{i} \st i \in \mb{N}\}, 1}{\Iden(1;\{s_{i} \st i \in \mb{N}\}), s_{i}^{*}s_{j} = \delta_{ij}1 \;(i,j \in \mb{N})}.\]
 \end{definition}
 It is known that $\mc{O}(n;\mb{C})$ is simple, see~\cite{Cuntz_SimpleAlgebras_1977} for details.
 Note then $\mc{O}(n;\mb{R})$ is simple by complexification.

\begin{corollary}
  The standard presentation of $\mc{O}(n;\mb{K})$ is computable for $2 \leq n \leq \infty$.
\end{corollary}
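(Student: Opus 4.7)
The plan is to combine two ingredients already established in the paper: the simplicity of the Cuntz algebras, and \autoref{thm: simple computable rup implies computable}, which says any c.e.\ presentation of a simple C*-algebra is computable. So the task reduces to checking that the given standard presentations of $\mc{O}(n;\mb{K})$ are c.e., and that $\mc{O}(n;\mb{K})$ is simple in each case.

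For $2 \leq n < \infty$, the set of generators is finite and the set of relations in the definition is a finite set of rational relations (indeed, each relation is either of the form $\norm{p} \leq 0$ for a rational $*$-polynomial $p$, or a norm bound; note that from $s_i^*s_i = 1$ we have $\norm{s_i} = 1$, and the bound $\norm{1} \leq 1$ is likewise forced, so the required norm bounds on generators can be added to the presentation without enlarging the universal object). Hence the presentation is finitely c.e., and in particular c.e. For $n = \infty$, the set of generators $\{s_i \st i \in \mb{N}\} \cup \{1\}$ is c.e.\ and the set of relations $\Iden(1;\{s_i\}) \cup \{s_i^*s_j = \delta_{ij}1 \st i,j \in \mb{N}\}$, augmented by the norm bounds $\norm{s_i} \leq 1$ and $\norm{1} \leq 1$, is clearly a c.e.\ set of rational relations. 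Thus in all cases the presentation is c.e.

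Simplicity of $\mc{O}(n;\mb{C})$ for $2 \leq n \leq \infty$ is the stated result of Cuntz. To pass to $\mb{K} = \mb{R}$, I would use complexification: by the remark following the definition of universal C*-algebras, $\urep{\mb{C}}{\mc{G}}{\mc{R}}$ is the complexification of $\urep{\mb{R}}{\mc{G}}{\mc{R}}$ when $\mc{R}$ is defined over $\mb{R}$ (which is the case here, as the relations have integer coefficients). Any closed two-sided ideal $I$ of a real C*-algebra $A$ has complexification $I_c = I + iI$ which is a closed two-sided ideal of $A_c$; if $A_c$ is simple then $I_c \in \{0, A_c\}$, and intersecting with the fixed points of the canonical conjugation gives $I \in \{0, A\}$. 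Thus $\mc{O}(n;\mb{R})$ is simple.

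With both pieces in place, \autoref{thm: simple computable rup implies computable} applies directly and yields that the standard presentation of $\mc{O}(n;\mb{K})$ is computable. The only mildly delicate step is checking the simplicity transfer for the real case, but this is a standard complexification argument and should be stated briefly rather than developed at length.
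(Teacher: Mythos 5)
Your proposal is correct and follows essentially the same route as the paper: the paper likewise notes that $\mc{O}(n;\mb{C})$ is simple by Cuntz, that $\mc{O}(n;\mb{R})$ is simple by complexification, and deduces the corollary directly from \autoref{thm: simple computable rup implies computable}, the standard presentation being evidently c.e. Your extra care about the implicit norm bounds on the generators and the ideal-complexification argument only fills in details the paper leaves unstated.
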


\begin{definition}
  For irrational $\theta \in (0,1)$, the \inlinedef{irrational rotation algebra} $A_{\theta}$ is the complex universal C*-algebra given by \[\urep{\mb{C}}{u,v,1}{\Iden(1;u,v) \cup \mc{R}},\]
  where \[\mc{R} = \{u^*u = uu^* = v^*v = vv^* = 1\} \cup \{uv = e^{2i\pi \theta}vu\}.\]
\end{definition}
It is known that $A_{\theta}$ is simple, see~\cite{Davidson_AlgebrasExample_1996} for details.

\begin{corollary}
  Let $\theta \in (0,1)$ be irrational.
  The following are equivalent:
  \begin{enumerate}[label= (\alph*)]
    \item $\theta$ is computable,
    \item the standard presentation $A_\theta$ is c.e.,
    \item the standard presentation $A_\theta$ is computable.
  \end{enumerate}
\end{corollary}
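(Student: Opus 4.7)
The plan is to close the cycle (a) $\Rightarrow$ (b) $\Rightarrow$ (c) $\Rightarrow$ (a). The middle implication is immediate from \autoref{thm: simple computable rup implies computable} since $A_\theta$ is simple, so the real content lies in the two remaining arrows, both of which turn on understanding the single irrational relation $uv = e^{2i\pi\theta}vu$.

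For (a) $\Rightarrow$ (b), write $\xi = e^{2i\pi\theta}$, which is a computable complex number whenever $\theta$ is, and effectively produce a sequence $\alpha_n \in \mb{Q}(i)$ with $|\alpha_n - \xi| < 2^{-n}$. I would replace the sole non-rational relation $uv = \xi vu$ with the c.e.\ family of rational relations $\norm{uv - \alpha_n vu} \leq 2^{-n}$, keeping the remaining finite rational relations as they are. The resulting universal C*-algebra $B$ admits a $*$-homomorphism to $A_\theta$ because the generators in $A_\theta$ satisfy these approximate relations (using that $\norm{vu} \leq 1$); conversely, passing to the limit $n \to \infty$ inside $B$ forces $uv = \xi vu$, so the universal property of $A_\theta$ yields the inverse map. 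These maps fix generators, hence are mutually inverse, and $A_\theta$ is c.e.

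For (c) $\Rightarrow$ (a), I exploit that in $A_\theta$ the elements $u,v$ are unitaries, so $\norm{vu} = 1$ in the (nonzero) algebra, and therefore for any $\alpha \in \mb{Q}(i)$ the rational point $uv - \alpha vu$ has norm exactly $|\xi - \alpha|$. If the standard presentation is computable, I can compute $|\xi - \alpha|$ uniformly in $\alpha$, which is enough to present $\xi$ as a computable complex number. Since $\theta \in (0,1)$ is irrational we have $\xi \neq 1$, so $|\xi - 1|$ is a strictly positive computable real, from which $\arg(\xi)$ is computable via the continuous branch on $\mb{T} \setminus \{1\}$; then $\theta = \arg(\xi)/(2\pi)$ recovers $\theta$ as a computable real.

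The main subtlety I expect is in (a) $\Rightarrow$ (b), specifically verifying that the c.e.\ family of approximate commutation relations really collapses inside $B$ to the exact relation $uv = \xi vu$ --- this requires the triangle inequality bound $\norm{uv - \xi vu} \leq \norm{uv - \alpha_n vu} + |\alpha_n - \xi|\norm{vu} \leq 2^{-n+1}$, valid in $B$ because $\norm{vu} \leq 1$ is forced by the unitarity relations. Once this is in hand, both directions of the isomorphism $B \cong A_\theta$ are routine applications of the universal property, and the cycle is complete.
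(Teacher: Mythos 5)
Your proposal is correct and takes essentially the same route as the paper: (b)$\Rightarrow$(c) via simplicity and \autoref{thm: simple computable rup implies computable}, (a)$\Rightarrow$(b) by replacing the single irrational relation with a c.e.\ family of rational approximate commutation relations, and (c)$\Rightarrow$(a) by recovering $e^{2i\pi\theta}$ from norms of rational points of the computable presentation. The only cosmetic differences are that the paper approximates $\cos(2\pi\theta)$ and $\sin(2\pi\theta)$ separately (using relations $\norm{uv-(a_k+ib_k)vu}\leq 2^{-k+1}$) rather than approximating $e^{2i\pi\theta}$ by Gaussian rationals, and in (c)$\Rightarrow$(a) it reads off $\sin(2\pi\theta)$ and $\cos(2\pi\theta)$ from the points $uvu^{*}v^{*}\mp vuv^{*}u^{*}$ instead of your distances $\norm{uv-\alpha vu}=|e^{2i\pi\theta}-\alpha|$; your explicit verification that the approximate relations collapse to the exact one, and your handling of the branch cut via $e^{2i\pi\theta}\neq 1$, fill in details the paper leaves implicit.
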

\begin{proof}
    (a) $\implies$ (b).
    Since $\theta$ is computable, $\cos(2\pi \theta)$ and $\sin(2\pi \theta)$ are also computable.
    Let $(a_k)_{k \in \mb{N}}$ be a computable enumeration of rationals such that $|\cos(2\pi\theta) - a_k| \leq 2^{-k}$ for $k \in \mb{N}$, and let $(b_k)_{k \in \mb{N}}$ be a computable enumeration of rationals such that $|\sin(2\pi\theta) - b_k| \leq 2^{-k}$ for $k \in \mb{N}$.
    Let
    \begin{align*}
      \mc{S} = \Iden(1;u,v) &\cup \{u^*u = uu^* = v^*v = vv^* = 1\} \\
      &\cup \{\norm{uv - (a_k +ib_k)vu} \leq 2^{-k+1} : k \in \mb{N}\}.
    \end{align*}
    Then $\mc{S}$ is c.e.\ and $A_{\theta}$ is the standard presentation $\urep{\mb{K}}{u,v,1}{\mc{S}}$.

    (b) $\implies$ (c).
    Since $A_\theta$ is simple, by \autoref{thm: simple computable rup implies computable}, $A_\theta$ is computable.

    (c) $\implies$ (a).
    Note that $uvu^*v^* - vuv^*u^* = 2i\sin(2\pi\theta)1$ is a rational point of $A_\theta$, so $\sin(2\pi\theta)$ is computable.
    Similarly, $uvu^*v^* + vuv^*u^* = 2i\cos(2\pi\theta)1$ is a rational point of $A_\theta$, so $\cos(2\pi\theta)$ is computable.
    The angle $\theta$ can be calculated from $\sin(2\pi \theta)$ and $\cos(2\pi\theta)$ with use of the arctangent function.
    Thus $\theta$ is computable.
  \end{proof}

We now investigate the connections between a group and its group C*-algebras.
We cover some of the same ground as in~\cite{FritzEtal_CanYou_2014}, but our perspective has the advantage of avoiding the use of semidefinite programming.

When working with arbitrary countable groups, we adopt the same language of presentations that we have used for C*-algebras.
We can agin use the framework for arbitrary metric structrues developed in~\cite{FranklinMcnicholl_DegreesLowness_2020} if we view discrete groups as metric structures equipped with the discrete metric.
\begin{definition}
Given a countable discrete group $G$ with identity $e$, we say $(G,\vk{g})$ is a \inlinedef{presentation} of $G$ if $\vk{g}$ is a countable sequence of elements from $G$ that generates $G$ as a group.
The presentation $(G,\vk{g})$ is \inlinedef{computable} if the set of all words $w$, on generators $\vk{g}$, for which $w = e$ in $G$ is computable.

\end{definition}
We say a presentation of $G$ is \inlinedef{c.e.}\ if the presentation witnesses that $G$ is recursively presented, and is \inlinedef{finitely c.e.}\ if the presentation witnesses that $G$ is finitely presented.

Let $G$ be a countable discrete group with presentation $G^{\dagger}$.
We denote by $\ugcsa{\mb{K}}{G^{\dagger}}$ the induced presentation of $\ugcsa{\mb{K}}{G}$, and by $\rgcsa{\mb{K}}{G^{\dagger}}$ the induced presentation of $\rgcsa{\mb{K}}{G}$.
From the definition of $\ugcsa{\mb{K}}{G}$, it is clear that if $G^{\dagger}$ is c.e., then $\ugcsa{\mb{K}}{G^{\dagger}}$ is c.e.
Furthermore, if $G^{\dagger}$ is finitely c.e., then $\ugcsa{\mb{K}}{G^{\dagger}}$ is finitely c.e.


\begin{theorem}
  Let $G$ be a countable discrete group with presentation $G^{\dagger}$.
  Let $N_{\mathrm{uni}}$ and $N_{\mathrm{red}}$ be the word problems of $\ugcsa{\mb{K}}{G^{\dagger}}$ and $\rgcsa{\mb{K}}{G^{\dagger}}$, respectively.
  The following are equivalent:
  \begin{enumerate}[label= (\alph*)]
    \item $G^{\dagger}$ is a computable presentation,
    \item $N_{\mathrm{uni}}$ is c.e.\ closed and $N_{\mathrm{red}}$ is strongly co-c.e.\ closed,
    \item $N_{\mathrm{uni}}$ is c.e.\ closed and the set of rational points which belong to the complement $N_{\mathrm{uni}}^{c}$ is c.e.,
    \item $N_{\mathrm{red}}$ is co-c.e.\ closed and there is a c.e.\ set of open rational balls, each which intersects $N_{\mathrm{red}}$, such that the set contains all balls centered at rational points belonging to $N_{\mathrm{red}}$.
  \end{enumerate}
\end{theorem}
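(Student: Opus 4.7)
The plan is to show that (a), (b), (c), and (d) are all equivalent by running the cycle (a) $\implies$ (b) $\implies$ (c) $\implies$ (a) and the separate equivalence (a) $\iff$ (d), resting on four structural facts. (i) For any rational point $r(\vk{c}) \in \contr{\mb{K}}{|\vk{g}|}$, both $\pi_{\mathrm{uni}}(r(\vk{c}))$ and $\pi_{\mathrm{red}}(r(\vk{c}))$ represent the same element $\sum_{g} a_{g} g$ of the algebraic group ring $\mb{K}[G]$ (sitting inside both group C*-algebras), and this element is zero in either iff all $a_{g} = 0$. (ii) If $G^{\dagger}$ is computable, we can decide which monomials in $\vk{g}^{\pm 1}$ coincide in $G$ and so compute this standard form, thereby deciding whether $r \in N_{\mathrm{red}}$. (iii) If $G^{\dagger}$ is computable, $\norm{\pi_{\mathrm{red}}(r(\vk{c}))}_{\rgcsa} = \sup_{\xi} \norm{\pi_{\mathrm{red}}(r(\vk{c}))\xi}_{\ell^{2}G}/\norm{\xi}$ is a c.e.\ supremum of computable quantities as $\xi$ ranges over rational finite-support vectors, giving c.e.\ lower bounds on the reduced norm. (iv) Gap: distinct $g_{1},g_{2} \in G$ satisfy $\norm{g_{1}-g_{2}}_{\rgcsa} \geq \sqrt{2}$ by evaluating at $\delta_{e}$, hence $\norm{g_{1}-g_{2}}_{\ugcsa} \geq \sqrt{2}$ since $\rgcsa{\mb{K}}{G}$ is a quotient of $\ugcsa{\mb{K}}{G}$. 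Fixing any generator $c_{1} \in \vk{c}$, for a word $w$ in $\vk{g}^{\pm 1}$ with corresponding *-monomial $m_{w}(\vk{c})$ and $m_{e} := c_{1}c_{1}^{*}$, the quantities $\norm{\pi_{\mathrm{uni}}(m_{w}-m_{e})}$ and $\norm{\pi_{\mathrm{red}}(m_{w}-m_{e})}$ lie in $\{0\}\cup[\sqrt{2},\infty)$, taking value $0$ iff $w=e$ in $G$.

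For (a) $\implies$ (b): decidability of the word problem makes the relations defining $\ugcsa{\mb{K}}{G^{\dagger}}$ in $\vk{g}$ computable, so $\ugcsa{\mb{K}}{G^{\dagger}}$ is a c.e.\ presentation and \autoref{c.e. wp} yields $N_{\mathrm{uni}}$ c.e.\ closed; and (iii) gives $N_{\mathrm{red}}$ strongly co-c.e.\ closed by enumerating $\widehat{B}(r(\vk{c}),d)$ whenever a lower bound on $\norm{\pi_{\mathrm{red}}(r)}_{\rgcsa}$ exceeds $d$. For (b) $\implies$ (c), the first clauses coincide, and by (i) a rational $r$ lies in $N_{\mathrm{uni}}^{c}$ iff $\norm{\pi_{\mathrm{red}}(r)}_{\rgcsa} > 0$, which is semi-decided by strong co-c.e.\ closedness of $N_{\mathrm{red}}$. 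For (c) $\implies$ (a), given a word $w$, dovetail two semi-decisions on the rational point $r_{w} := m_{w}(\vk{c}) - m_{e}(\vk{c})$: by \autoref{c.e. norm} the c.e.\ closedness of $N_{\mathrm{uni}}$ lets us enumerate upper bounds on $\norm{\pi_{\mathrm{uni}}(r_{w})}$, and by (iv) the first bound below $1$ appears iff $w = e$; while the c.e.\ enumeration of rational points in $N_{\mathrm{uni}}^{c}$ contains $r_{w}$ iff $w \neq e$.

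For (a) $\iff$ (d): given (a), the strong co-c.e.\ closedness of $N_{\mathrm{red}}$ from above implies ordinary co-c.e.\ closedness (whenever $\widehat{B}(r,d) \cap N_{\mathrm{red}} = \emptyset$ we may enumerate the open ball $B(r,d) \subseteq N_{\mathrm{red}}^{c}$; these cover $N_{\mathrm{red}}^{c}$ since every point of $N_{\mathrm{red}}^{c}$ admits a sufficiently close rational approximation whose positive distance to $N_{\mathrm{red}}$ is detectable); the c.e.\ set $S$ of balls required by (d) is built by using (ii) to \emph{decide} which rational $r$ belong to $N_{\mathrm{red}}$ and then enumerating every $B(r,d)$ with dyadic rational $d>0$ for such $r$. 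Conversely, given (d), we decide $w = e$ by dovetailing: by (iv), the specific ball $B(r_{w},1)$ intersects $N_{\mathrm{red}}$ iff $r_{w} \in N_{\mathrm{red}}$ iff $w = e$, so this ball appears in the c.e.\ enumeration of $S$ iff $w = e$; simultaneously, the co-c.e.\ closedness of $N_{\mathrm{red}}$ semi-decides $r_{w} \in N_{\mathrm{red}}^{c}$, which succeeds iff $w \neq e$.

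The chief technical obstacle is fact (iii): showing that the decidable group word problem yields c.e.\ lower bounds on the reduced norm. The naive $\ell^{2}$-evaluation at $\delta_{e}$ alone gives only a single lower bound that may be much smaller than the operator norm, so one must take suprema over all rational finite-support vectors $\xi$ and exploit density in $\ell^{2}G$ to approach the operator norm. Once (iii) is in hand, together with (ii) and the $\sqrt{2}$ gap of (iv), the remaining implications are routine dovetailing arguments across the quotient maps from $\contr{\mb{K}}{|\vk{g}|}$.
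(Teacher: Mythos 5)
Your proposal is correct and takes essentially the same route as the paper's proof: the same ingredients appear — the $\sqrt{2}$ gap, the fact that a rational point vanishes in the universal algebra iff it vanishes in the reduced one (your (i)), reduced-norm lower bounds computed from the action on finitely supported rational $\ell^2$-vectors using the decidable word problem (your (iii)), and \autoref{c.e. norm}/\autoref{c.e. wp} to handle $N_{\mathrm{uni}}$. The only differences are cosmetic (you prove (a)$\implies$(d) directly, building the c.e.\ ball set by deciding membership of rational centers in $N_{\mathrm{red}}$, where the paper goes (b)$\implies$(d)); just be sure, as the paper does, to invoke \autoref{norm lifts} so that a closed rational ball disjoint from $N_{\mathrm{red}}$ forces a strictly larger reduced norm, which gives the completeness half of strong co-c.e.\ closedness.
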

\begin{proof}
  Let $\vk{g}$ be such that $G^{\dagger} = (G,\vk{g})$.
  Let $\vk{c}$ be the corresponding sequence of generators for $\contr{\mb{K}}{|\vk{g}|}$.
  Let $e$ be the identity in $G$.

  (a) $\implies$ (b).
  Since $G^{\dagger}$ is c.e., $\ugcsa{\mb{K}}{G^{\dagger}}$ is c.e..
  By \autoref{c.e. wp}, $N_{\mathrm{uni}}$ is c.e.\ closed.

  We show $N_{\mathrm{red}}$ is strongly co-c.e.\ closed.
  We enumerate through all rational points $r(\vk{g})$ of $\rgcsa{\mb{K}}{G^{\dagger}}$ and all positive dyadic rationals $d$.
  Since $G^{\dagger}$ is computable, for each pair $r(\vk{g})$ and $d$, we can begin an enumeration of all finite sums $\sum_{i=1}^{n} a_{i}h_{i}$ where $a_{1},\ldots,a_{n}$ are nonzero rationals of $\mb{K}$, $h_{1},\ldots,h_{n}$ are words on $\vk{g}$ such that $h_{i} \neq h_{j}$ in $G$ for $1 \leq i < j < n$, and \[\norm{\sum_{i=1}^{n} a_{i}h_{i}}_{\ell^{2}(G)}^{2} = \sum_{i=1}^{n} a_{i}^{2} \leq 1.\]
  Again using that $G^{\dagger}$ is computable, we can effectively rewrite the action of $r(\vk{g})$ on $\sum_{i=1}^{n} a_{i}h_{i}$ into the form $\sum_{i=1}^{m} b_{i}f_{i}$ where $b_{1},\ldots,b_{m}$ are nonzero rationals of $\mb{K}$, and $f_{1},\ldots,f_{m}$ are words on $\vk{g}$ such that $f_{i} \neq f_{j}$ in $G$ for $1 \leq i < j \leq m$.
  Note \[\norm{r(\vk{g})}_{\mathrm{red}} \geq \norm{\sum_{i=1}^{m} b_{i}f_{i}}_{\ell^{2}(G)} = \bigg(\sum_{i=1}^{m} b_{i}^{2}\bigg)^{1/2}.\]
  If $d < (\sum_{i=1}^{m} b_{i}^{2})^{1/2}$, we output $\widehat{B}(r(\vk{c}),d)$.
  Since $\norm{r(\vk{g})}_{\mathrm{red}}$ is the supremum of all such $\norm{\sum_{i=1}^{m} b_{i}f_{i}}_{\ell^{2}(G)}$, by \autoref{norm lifts}, we have shown $N_{\mathrm{red}}$ is strongly co-c.e.\ closed.


  (b) $\implies$ (c).
  Since $N_{\mathrm{red}}$ is strongly co-c.e.\ closed, we can enumerate all rational points $r$ such that $\norm{r}_{\mathrm{red}} > 0$.
  Now, we just observe that $\norm{r}_{\mathrm{uni}} > 0$ if and only if $\norm{r}_{\mathrm{red}} > 0$ for rational points $r$.

  (c) $\implies$ (a).
  Let $w$ be a word on generators $\vk{g}$.
  Then $\norm{w - e}_{\mathrm{uni}} = 0$ or $\norm{w - e}_{\mathrm{uni}} \geq \sqrt{2}$.
  Since $N_{\mathrm{uni}}$ is c.e.\ closed, we can enumerate all open rational balls $B(r(\vk{c}),d)$ that intersect $N_{\mathrm{uni}}$.
  If ever $r(\vk{g}) = w - e$ and $d < \sqrt{2}$, we have determined that $w = e$ in $G$.
  Simultaneously, we enumerate all rational points $q(\vk{c})$ which belong $N_{\mathrm{uni}}^{c}$.
  If ever $q(\vk{g}) = w - e$, then $\norm{w - e}_{\mathrm{uni}} > 0$, so $w \neq e$ in $G$.

  (b) $\implies$ (d).
  Since $N_{\mathrm{uni}}$ is c.e.\ closed, we can enumerate all open rational balls $B(r(\vk{c}),d)$ such that $\norm{r(\vk{g})}_{\mathrm{uni}} < d$.
  Any such open rational ball intersects $N_{\mathrm{red}}$ since $\norm{\cdot}_{\mathrm{red}} \leq \norm{\cdot}_{\mathrm{uni}}$.
  If $r(\vk{c})$ belongs to $N_{\mathrm{red}}$, then $\norm{r(\vk{g})}_{\mathrm{red}} = 0$, so $\norm{r(\vk{g})}_{\mathrm{uni}} = 0$.
  Hence $B(r(\vk{c}),d)$ will be enumerated for all positive dyadic rationals $d$.

  (d) $\implies$ (a).
  Let $w$ be a word on generators $\vk{g}$.
  Then $\norm{w - e}_{\mathrm{red}} = 0$ or $\norm{w - e}_{\mathrm{red}} \geq \sqrt{2}$.
  We express $w - e$ as a rational point $q(\vk{g})$.
  Since $N_{\mathrm{red}}$ is co-c.e.\ closed, we can enumerate a sequence of open rational balls $B(r(\vk{c}), d)$ whose union is $N_{\mathrm{red}}^{c}$.
  If ever $\norm{q(\vk{c}) - r(\vk{c})} < d$, then $q(\vk{c})$ belongs to $N_{\mathrm{red}}^{c}$, so we have determined $w \neq e$ in $G$.
  Simultaneously, we enumerate a sequence of open rational balls $B(s(\vk{c}),e)$, each which intersects $N_{\mathrm{red}}$, such that the sequence contains all balls centered at rational points belonging to $N_{\mathrm{red}}$.
  If ever $s(\vk{g}) = w - e$ and $d < \sqrt{2}$, we have determined that $w = e$ in $G$.
\end{proof}
The properties (c) and (d) may fail to be computably robust for an arbitrary C*-algebra.
However, in theorem above, the properties are robust in the sense that they are preserved between computably isomorphic presentations of $G$.

Observe if $\ugcsa{\mb{K}}{G^{\dagger}}$ has computable closed word problem, then (c) is satisfied, so $G^{\dagger}$ is computable.
Similarly, if $\rgcsa{\mb{K}}{G^{\dagger}}$ has computable closed word problem,  then (d) is satisfied, so $G^{\dagger}$ is computable.

\begin{question}
  Is there a computable presentation $G^{\dagger}$ of a countable discrete group $G$ such that $\ugcsa{\mb{K}}{G^{\dagger}}$ or $\rgcsa{\mb{K}}{G^{\dagger}}$ does not have computable closed word problem?
\end{question}

If $G$ is amenable, then $\ugcsa{\mb{K}}{G^{\dagger}} = \rgcsa{\mb{K}}{G^{\dagger}}$, see~\cite{Davidson_AlgebrasExample_1996}, so we have the following stronger characterization.
\begin{corollary}
  Let $G$ be an amenable discrete group with presentation $G^{\dagger}$.
  Then $G^{\dagger}$ is computable if and only if $\ugcsa{\mb{K}}{G^{\dagger}} = \rgcsa{\mb{K}}{G^{\dagger}}$ is computable.
\end{corollary}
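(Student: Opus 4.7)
The plan is to chain together the previous theorem with \autoref{computable wp}, using crucially that amenability collapses the two word problems into one. Write $N$ for the common word problem of $\ugcsa{\mb{K}}{G^{\dagger}} = \rgcsa{\mb{K}}{G^{\dagger}}$; then $N_{\mathrm{uni}} = N_{\mathrm{red}} = N$, which is what allows the asymmetric conditions in the preceding theorem to be combined into a symmetric one.

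For the forward direction, assume $G^{\dagger}$ is computable. Applying clause (b) of the previous theorem, $N_{\mathrm{uni}}$ is c.e.\ closed and $N_{\mathrm{red}}$ is strongly co-c.e.\ closed; since these are the same closed set $N$, we conclude that $N$ is strongly computable closed. By \autoref{computable wp}, this means the standard presentation $\ugcsa{\mb{K}}{G^{\dagger}} = \rgcsa{\mb{K}}{G^{\dagger}}$ is computable.

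For the converse, assume $\ugcsa{\mb{K}}{G^{\dagger}} = \rgcsa{\mb{K}}{G^{\dagger}}$ is computable. By \autoref{computable wp} applied to this common presentation, $N$ is strongly computable closed, so in particular it is both c.e.\ closed and strongly co-c.e.\ closed. Identifying $N$ with $N_{\mathrm{uni}}$ and $N_{\mathrm{red}}$ respectively, clause (b) of the previous theorem is fulfilled, giving that $G^{\dagger}$ is computable.

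There is really no obstacle; the only subtlety is the invocation of amenability, whose sole role here is to justify equating $N_{\mathrm{uni}}$ with $N_{\mathrm{red}}$ so that the two halves of clause (b) collapse to the hypothesis of \autoref{computable wp}. Beyond that, the argument is a short bookkeeping exercise with the equivalences already established.
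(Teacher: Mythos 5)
Your argument is correct and is exactly the route the paper intends: amenability identifies $N_{\mathrm{uni}}$ and $N_{\mathrm{red}}$ (as closed subsets of the same computably presented $\contr{\mb{K}}{|\vk{g}|}$), so clause (b) of the preceding theorem becomes ``the common word problem is strongly computable closed,'' which by \autoref{computable wp} is equivalent to computability of $\ugcsa{\mb{K}}{G^{\dagger}} = \rgcsa{\mb{K}}{G^{\dagger}}$. The paper leaves this as an immediate consequence, and your write-up just makes that bookkeeping explicit; no gaps.
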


For finitely generated groups, we can thus restate the theorem as follows.
\begin{corollary}
Let $G$ be a finitely generated discrete group.
Then $G$ has solvable word problem if and only if the word problem of $\ugcsa{\mb{K}}{G^{\dagger}}$ is c.e.\ closed and the word problem of $\rgcsa{\mb{K}}{G^{\dagger}}$ is strongly co-c.e.\ closed for all (for some) presentations $G^{\dagger}$ of $G$.
If in addition $G$ is amenable, then $G$ has solvable word problem if and only if $\ugcsa{\mb{K}}{G^{\dagger}} = \rgcsa{\mb{K}}{G^{\dagger}}$ is computable for all (for some) presentations $G^{\dagger}$ of $G$.
\end{corollary}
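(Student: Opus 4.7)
The plan is to derive both equivalences from the preceding theorem, together with the classical fact that for a finitely generated discrete group the solvability of the word problem is independent of the choice of finite generating set, and, for the second equivalence, the identity $\ugcsa{\mb{K}}{G^\dagger} = \rgcsa{\mb{K}}{G^\dagger}$ for amenable $G$ that was already used above.

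For the first equivalence, fix any finitely generated presentation $G^\dagger$ of $G$. Since $G$ is finitely generated, $G$ has solvable word problem if and only if $G^\dagger$ is a computable presentation in the sense of the excerpt, and this property is moreover independent of the choice of finite generating set, so it holds for some finitely generated presentation if and only if it holds for every one. Applying the equivalence (a) $\iff$ (b) of the preceding theorem to any such $G^\dagger$ then yields the first equivalence of the corollary, with the ``for all (for some)'' alternation being exactly this presentation-independence.

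For the second equivalence, assume additionally that $G$ is amenable. Then $\ugcsa{\mb{K}}{G} = \rgcsa{\mb{K}}{G}$, so for any presentation $G^\dagger$ of $G$ the induced presentations $\ugcsa{\mb{K}}{G^\dagger}$ and $\rgcsa{\mb{K}}{G^\dagger}$ agree and their word problems $N_{\mathrm{uni}}$ and $N_{\mathrm{red}}$ coincide as a single closed set $N$. Condition (b) of the preceding theorem accordingly becomes the statement that $N$ is both c.e.\ closed and strongly co-c.e.\ closed, i.e., $N$ is strongly computable closed; by \autoref{computable wp} this is equivalent to $\ugcsa{\mb{K}}{G^\dagger} = \rgcsa{\mb{K}}{G^\dagger}$ being a computable presentation. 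Combining with the first equivalence yields the desired statement.

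Essentially all of the content is already carried by the preceding theorem and \autoref{computable wp}; the only external fact needed is the classical presentation-independence of the word problem for finitely generated groups, so there is no genuine obstacle beyond assembling these pieces.
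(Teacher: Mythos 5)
Your proof is correct and takes essentially the paper's own route: the paper states this corollary without a separate argument, treating it as an immediate restatement of the preceding theorem's (a) $\iff$ (b) combined with the amenable-group corollary, which is exactly what you assemble (your appeal to \autoref{computable wp} simply re-derives that amenable corollary). The only outside ingredient in both cases is the classical fact that solvability of the word problem of a finitely generated group does not depend on the chosen finite generating set, which you invoke explicitly.
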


By Boone~\cite{Boone_WordProblem_1958} and Novikov~\cite{Novikov_AlgorithmicUnsolvability_1955}, there are finitely presented groups with unsolvable word problem.
If $G$ is such a group with corresponding presentation $G^{\dagger}$, then $\ugcsa{\mb{K}}{G^{\dagger}}$ is finitely c.e.\ but not computable, in fact, the word problem is not even computable closed.



\section{The relationship between real and complex C*-algebras}\label{sec4}

In this section, we investigate the relationship between presentations of real C*-algebras and their complexifications.
The main benefit is the ability to extend results already established for real Banach algebras to real and complex C*-algebras.
In particular, using a result of Melnikov and Ng~\cite{MelnikovNg_ComputableStructures_2016}, we show $\cts{\mb{K}}{[0,1]}$ is not computably categorical as a C*-algebra over $\mb{K}$.

First, we include a result about presentations of abelian C*-algebras $\ctsvai{\mb{K}}{X}$ induced from presentations of $X$.
By \autoref{accsa}, every abelian complex C*-algebra is of this form.
However, by \autoref{arcsa}, only those abelian real C*-algebras with trivial $*$-operation are of this form.

\begin{definition}
Given a separable metric space $X$, a \inlinedef{presentation} of $X$ is a pair $(X,\vk{x})$ where $\vk{x}$ is a countable sequence of elements from $X$ which is dense in $X$.
The presentation $(X,\vk{x})$ is \inlinedef{computable} if $d(x_{i}, x_{j})$ is computable uniformly in $i$ and $j$.
\end{definition}

We will only really be concerned with proper metric spaces, i.e., metric spaces in which every closed ball is compact. Note proper metric spaces are both locally compact and complete.

\begin{definition}
  A computable presentation $X^{\dagger}$ of a proper metric space $X$ is \inlinedef{computably proper} if there is an effective procedure which, when given a closed rational ball $K$ and $i \in \mb{N}$, returns a finite sequence of open rational balls of radius at most $2^{-i}$ that covers $K$.
\end{definition}

For those in computable analysis, this is simply a reformulation of the effective covering property for metric spaces with compact closed balls, see~\cite{Iljazovic_ChainableCircularly_2009}.
Furthermore, if $X$ is compact then a computable presentation of $X$ is computably proper if and only if it is effectively compact as defined in~\cite{IljazovicKihara_ComputabilitySubsets_2021}.

We extend an observation made by Tim McNicholl in~\cite{BazhenovEtal_ComputableStone_2021} and include a proof.
\begin{theorem}\label{cpt proper implies cpt}
  Let $X$ be a separable proper metric space which admits a computably proper presentation.
  Then $\ctsvai{\mb{K}}{X}$ admits a computable presentation as a C*-algebra over $\mb{K}$.
  In particular, $\ctsvai{\mb{R}}{X}$ admits a computable presentation as a real Banach space.
\end{theorem}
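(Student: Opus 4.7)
The plan is to present $\ctsvai{\mb{K}}{X}$ using the countable family of ``bump'' functions $g_{n,r}(y) = \max(0,\, r - d(y, x_n))$, indexed by $n \in \mb{N}$ and positive rationals $r$, enumerated as a single sequence $\vk{g}$. Each $g_{n,r}$ is continuous, real-valued (hence self-adjoint), $1$-Lipschitz, bounded in sup norm by $r$, and supported in the closed ball $\overline{B(x_n,r)}$, which is compact because $X$ is proper; so each $g_{n,r}$ lies in $\ctsvai{\mb{K}}{X}$. Standard choices show that $\{g_{n,r}\}$ separates points of $X$ and vanishes nowhere, so the Stone--Weierstrass theorem for locally compact Hausdorff spaces yields that the $*$-subalgebra over $\mb{K}$ generated by $\vk{g}$ is dense in $\ctsvai{\mb{K}}{X}$; thus $(\ctsvai{\mb{K}}{X},\, \vk{g})$ is a presentation.

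The core step is to show uniform computability of $\norm{f}$ when $f = p(g_{n_1,r_1}, \ldots, g_{n_k,r_k})$ for a rational $*$-polynomial $p$ with no constant term. Because $p$ has no constant term, $f$ vanishes outside the compact set $K = \bigcup_\ell \overline{B(x_{n_\ell}, r_\ell)}$, so $\norm{f} = \sup_{y \in K} |f(y)|$. From the syntactic form of $p$ together with the bounds $\norm{g_{n_\ell, r_\ell}} \leq r_\ell$ and the common $1$-Lipschitz bound on the inputs, one effectively computes a Lipschitz constant $L$ for $f$. Given target accuracy $2^{-j}$, set $\delta = 2^{-j-2}/L$ and use computable properness to cover each $\overline{B(x_{n_\ell}, r_\ell)}$ by finitely many open rational balls of radius at most $\delta$; let $x_{m_1}, \ldots, x_{m_N}$ be the centers of the combined cover of $K$. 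Each value $f(x_{m_i})$ is uniformly computable because $g_{n,r}(x_m) = \max(0,\, r - d(x_n, x_m))$ is uniformly computable from the computable presentation of $X$. Let $M$ be the maximum of $2^{-j-2}$-accurate approximations to $|f(x_{m_i})|$; the estimate $|f(y) - f(x_{m_i})| \leq L\delta$ for $y$ within $\delta$ of some $x_{m_i}$, combined with $f \equiv 0$ off $K$, then yields $|\norm{f} - M| \leq 2^{-j}$. Since this procedure is uniform in $p$ and $j$, the pair $(\ctsvai{\mb{K}}{X},\, \vk{g})$ is a computable presentation of $\ctsvai{\mb{K}}{X}$ as a C*-algebra over $\mb{K}$. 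The ``in particular'' clause follows immediately, since a computable C*-algebra presentation restricts to a computable real Banach space presentation by forgetting multiplication and the $*$-operation.

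The main obstacle I anticipate is bookkeeping rather than conceptual: extracting, from the symbolic form of $p$ and the norm bounds on its inputs, an explicit effective Lipschitz constant $L$, and coordinating the cover radius $\delta$ with the evaluation accuracy so that the final error is at most $2^{-j}$. Two structural facts drive the proof. First, computable properness reduces the sup-norm computation on a compact set to a finite maximum of values at special points, which are computable from the data of the presentation of $X$. Second, the convention that rational $*$-polynomials have no constant term is essential, because otherwise the support of $f$ need not lie in any compact set built from the inputs, and the reduction to a finite cover fails.
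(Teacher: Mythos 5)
Your proof is correct and follows essentially the same route as the paper: the paper generates $\ctsvai{\mb{K}}{X}$ by the functions $z \mapsto \frac{1}{1+d(x_n,z)}$ and controls the region outside a compact set by a tail estimate, while you use compactly supported bumps $\max(0,\, r - d(\cdot,x_n))$ so that rational points vanish exactly off a finite union of closed rational balls; the rest (Stone--Weierstrass, an effective Lipschitz/modulus bound for rational $*$-polynomials, computable properness to obtain a finite cover by balls centered at special points, and a finite maximum of computable values with the stated error analysis) matches the paper's argument. The only point to tighten is the final remark: to obtain a computable real Banach space presentation you should take as special points a computable enumeration of the products of the C*-special points (so that rational \emph{linear} combinations are dense), as the paper does, rather than literally keeping the same special points and forgetting multiplication.
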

\begin{proof}
  Let $X^{\dagger} = (X,{(x_n)}_{n \in \mb{N}})$ be a computably proper presentation of $X$.
  For each $n \in \mb{N}$, let $f_n : X \to \mb{K}$ be defined by $f_n(z) = \frac{1}{1 + d(x_n,z)}$, and observe $f_n$ belongs to $\ctsvai{\mb{K}}{X}$.
  Since ${(f_n)}_{n \in \mb{N}}$ separates points and vanishes nowhere, $\cgen{\mb{K}}{{(f_n)}_{n \in \mb{N}}} = C_0(X; \mb{K})$ by Stone-Weierstrass.
  Let $\ctsvai{\mb{K}}{X}^{\dagger} = (\ctsvai{\mb{K}}{X}, {(f_{n})}_{n \in \mb{N}})$.

  We show $\ctsvai{\mb{K}}{X}^{\dagger}$ is computable.
  We are given a rational point $q(f_{\ell_1},\ldots,f_{\ell_n})$ of $\ctsvai{\mb{K}}{X}^{\dagger}$ and a positive integer $k \in \mb{N}$.
  We must compute a rational $r$ such that $|\norm{q(f_{\ell_1},\ldots,f_{\ell_n})} - r| < 2^{-k}$.
  By \autoref{moc}, we can effectively determine $j \in \mb{N}$ so that, for all $z,w \in X$, if $\max_i |f_{\ell_i}(z) - f_{\ell_i}(w)| \leq 2 ^{-j}$, then $|q(f_{\ell_1}(z),\ldots,f_{\ell_n}(z)) - q(f_{\ell_1}(w),\ldots,f_{\ell_n}(w))| < 2^{-(k+1)}$.
  Let $M$ a positive integer which bounds the sum of the absolute values of the coefficients of $q$.
  Let $K = \bigcup_{i=1}^{n}\{z \in X \st d(x_{\ell_i}, z) \leq M2^{k+1}\}$, so $K$ is a finite union of closed rational balls.
  As $X^{\dagger}$ is computably proper, we can effectively determine a sequence of open rational balls of radius at most $2^{-j}$, centered at points $x_{\nu_{1}},\ldots,x_{\nu_{r}}$, such that the sequence covers $K$.
  Let $N = \max_i |q(f_{\ell_1}(x_{\nu_i}),\ldots,f_{\ell_n}(x_{\nu_i}))|$, so $N$ is computable since $X^{\dagger}$ is computable.

  Now, for all $z \in X$, either there exists $x_{\nu_{m}}$ such that $d(x_{\nu_{m}}, z) < 2^{-j}$, or $z \not\in K$ so $|q(f_{\ell_1}(z),\ldots,f_{\ell_n}(z))| \leq 2^{-(k+1)}$.
  If $d(x_{\nu_m}, z) < 2^{-j}$, then since \[\max_i |f_{\ell_i}(x_{\nu_m}) - f_{\ell_i}(z)| = \max_{i}\bigg|\frac{d(x_{\ell_{i}}, z) - d(x_{\ell_{i}}, x_{\nu_{m}})}{(1 + d(x_{\ell_{i}}, x_{\nu_{m}}))(1 + d(x_{\ell_{i}}, z))}\bigg| \leq d(x_{\nu_m},z),\]
  we can conclude $|q(f_{\ell_1}(z),\ldots,f_{\ell_n}(z))| < N + 2^{-(k+1)}$.
  Hence \[N \leq \norm{q(f_{\ell_1},\ldots,f_{\ell_n})} \leq N + 2^{-(k+1)}.\]
  Since $N$ is computable, we can effectively determine a rational $r$ such that $|\norm{q(f_{\ell_1},\ldots,f_{\ell_n})} - r| < 2^{-k}$, as required.

  If $C_0(X;\mb{R})$ admits a computable presentation as a real C*-algebra (real Banach algebra), then the real Banach space presentation formed from a computable sequence of the products of the special points is computable.
\end{proof}

Now, we begin our investigation of the relationship between real C*-algebras and their complexifications.

We can easily extend our computability notions from complex C*-algebras to complex C*-algebras with an associated conjugation operation.
This is, once again, an instance of the framework for arbitrary metric structures developed in~\cite{FranklinMcnicholl_DegreesLowness_2020}.
Let $B$ be a complex C*-algebra and let $\tau$ be a conjugation on $B$.
We say $(B,\tau,(z_{n})_{n \in \mb{N}})$ is a \inlinedef{presentation} of $(B,\tau)$ if $\{z_{n} \st n \in \mb{N}\} \cup \{\tau(z_{n}) \st n \in \mb{N}\}$ generates $B$ as a complex C*-algebra.
The \inlinedef{rational points} of $(B,\tau,(z_{n})_{n \in \mb{N}})$ take the form $p(z_{i_{1}},\ldots,z_{i_{j}};\tau(z_{i_{j+1}}),\ldots,\tau(z_{i_{k}}))$ for $i_{1},\ldots,i_{k} \in \mb{N}$ where $p$ is a rational $*$-polynomial in $k$ noncommuting variables with no constant term.
We can then define \inlinedef{computable presentations} and \inlinedef{computable categoricity} as we did for complex C*-algebras, where we now require isomorphisms to preserve $\tau$.

\begin{lemma}\label{complexification cc}
  Let $B$ be a complex C*-algebra and let $\tau$ be a conjugation on $B$.
  Let $A = \{b \in B \st \tau(b) = b\}$ be the real C*-algebra determined by $(B,\tau)$.
  Then any computable presentation of $(B,\tau)$ induces a computable presentation of $A$.
  Furthermore, if $A$ is computably categorical as a real C*-algebra, then $(B,\tau)$ is computably categorical as a complex C*-algebra with conjugation.
\end{lemma}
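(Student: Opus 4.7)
The plan is to construct an induced real C*-algebra presentation of $A$ from any given complex presentation of $(B, \tau)$, and then use that transfer to lift computable categoricity from $A$ to $(B, \tau)$.

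For the first part, let $(B, \tau, (z_n)_{n \in \mb{N}})$ be a computable presentation of $(B, \tau)$. I would define $x_n = \frac{1}{2}(z_n + \tau(z_n))$ and $y_n = \frac{1}{2i}(z_n - \tau(z_n))$, both of which lie in $A$ and are rational points of the given presentation of $(B, \tau)$. The sequence $(x_n, y_n)_{n \in \mb{N}}$ generates $A$ as a real C*-algebra: if $A' \subseteq A$ is the real C*-subalgebra it generates, then $A' + iA'$ is a complex $*$-subalgebra of $B$ that contains every $z_n = x_n + iy_n$ and every $\tau(z_n) = x_n - iy_n$, so $A' + iA' = B$, forcing $A' = A$ (as $A$ is the $\tau$-fixed set). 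Then $A^\dagger := (A, (x_n, y_n)_{n \in \mb{N}})$ is a presentation of $A$, and to see it is computable, I would note that any rational real $*$-polynomial in the $x_n, y_n$ can be rewritten (by replacing $x_n$ and $y_n$ with their expressions in $z_n, \tau(z_n)$, which has coefficients in $\mb{Q}(i)$) as a rational complex $*$-polynomial in $z_n$ and $\tau(z_n)$, i.e.\ as a rational point of $(B, \tau)^\dagger$. Since the inclusion $A \hookrightarrow B$ is an isometry, the norm in $A$ equals the norm computed in $B$, and the latter is computable uniformly by hypothesis.

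For the second part, let $(B, \tau)^\dagger$ and $(B, \tau)^+$ be two computable presentations of $(B, \tau)$, and let $A^\dagger, A^+$ be their induced real presentations from the first part. By computable categoricity of $A$, there is a computable real $*$-isomorphism $\varphi : A^\dagger \to A^+$. I would extend $\varphi$ to the complex $*$-isomorphism $\varphi_c : B \to B$ defined by $\varphi_c(u + iv) = \varphi(u) + i\varphi(v)$ for $u, v \in A$; since $\tau(u + iv) = u - iv$, one checks directly that $\varphi_c \circ \tau = \tau \circ \varphi_c$, so $\varphi_c$ is an isomorphism of complex C*-algebras with conjugation. To verify $\varphi_c$ is computable from $(B, \tau)^\dagger$ to $(B, \tau)^+$, I would take a rational point $p(z_i; \tau(z_i))$ of $(B, \tau)^\dagger$, rewrite it via $z_i = x_i + iy_i$ and $\tau(z_i) = x_i - iy_i$ as a rational complex $*$-polynomial in the $x_i, y_i$, and observe that its image under $\varphi_c$ is the same polynomial in $\varphi(x_i), \varphi(y_i)$; each $\varphi(x_i), \varphi(y_i)$ is a computable point of $A^+$ (hence of $(B, \tau)^+$) uniformly in $i$, so applying the modulus-of-continuity machinery of \autoref{moc} yields that $\varphi_c$ sends rational points of $(B, \tau)^\dagger$ to uniformly computable points of $(B, \tau)^+$.

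The construction is essentially a bookkeeping exercise once the right special points are chosen, so there is no single dramatic obstacle. The only technical nuisance is confirming that rewriting a rational real $*$-polynomial in $(x_n, y_n)$ as a rational complex $*$-polynomial in $(z_n, \tau(z_n))$ (and vice versa) respects the rationality conditions on coefficients, which works because $\tfrac{1}{2}, \tfrac{i}{2} \in \mb{Q}(i)$; everything else follows from the fact that $A$ sits isometrically inside its complexification $B$.
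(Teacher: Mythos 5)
Your proposal is correct and follows essentially the same route as the paper: induce the real presentation from the points $\tfrac{1}{2}(z_n+\tau(z_n))$ and $\tfrac{1}{2i}(z_n-\tau(z_n))$, apply computable categoricity of $A$ to get a computable isomorphism $\eta$, and extend it to $B$ by $z \mapsto \eta\bigl(\tfrac{1}{2}(z+\tau(z))\bigr) + i\eta\bigl(\tfrac{1}{2i}(z-\tau(z))\bigr)$. You simply spell out more of the routine verifications (generation of $A$, rationality of coefficients under rewriting, $\tau$-preservation, and the use of \autoref{moc}) that the paper leaves implicit.
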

\begin{proof}
If $(B, \tau, (z_n)_{n \in \mb{N}})$ is a presentation of $(B,\tau)$, then we consider the induced presentation $(A, (x_n)_{n \in \mb{N}})$ on $A$ given by $x_{2n-1} = \frac{1}{2}(z_n + \tau(z_n))$ and $x_{2n} = \frac{1}{2i}(z_n - \tau(z_n))$ for $n \in \mb{N}$.
Let $(B,\tau)^{+}$ and $(B,\tau)^\dagger$ be computable presentations of $(B,\tau)$.
Let $A^{+}$ and $A^\dagger$ be the presentations of $A$ induced by $(B,\tau)^{+}$ and $(B,\tau)^\dagger$ respectively.
Then $A^{+}$ and $A^\dagger$ are also computable.
As $A$ is computably categorical, there exists a computable isomorphism $\eta: A^{+} \to A^\dagger$.
Thus $\varphi : B^{+} \to B^\dagger$, defined by $\varphi(z) = \eta\big(\frac{1}{2}(z + \tau(z))\big) + i\eta\big(\frac{1}{2i}(z - \tau(z))\big)$, is a computable isomorphism.
\end{proof}

If $B$ is abelian, then we can consider the $*$-operation as a conjugation on $B$, and in this case, there is a strong converse.

\begin{theorem}\label{cc iff sa cc}
  Let $B$ be an abelian complex C*-algebra with a computable presentation.
  Let $A$ be the subset of self-adjoint elements of $B$.
  Then $A$ is a real C*-algebra, and any computable presentation of $A$ induces a computable presentation of $B$.
  Furthermore, the following are equivalent:
  \begin{enumerate}[label= (\alph*)]
  \item $B$ is computably categorical as a complex C*-algebra,
  \item $A$ is computably categorical as a real C*-algebra,
  \item $A$ is computably categorical as a real Banach algebra.
  \end{enumerate}
\end{theorem}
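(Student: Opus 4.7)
The plan is to establish the structural claims about $A$ and the induced presentation, then prove the equivalences in the order (b) $\Rightarrow$ (a) via \autoref{complexification cc}, (a) $\Rightarrow$ (b) by restricting a complex isomorphism, and (b) $\Leftrightarrow$ (c) via triviality of the involution on $A$. First, since $B$ is abelian, the involution $*$ is multiplicative and so defines a conjugation on $B$, of which $A$ is the fixed-point algebra; hence $A$ is a real C*-algebra by the discussion preceding \autoref{arcsa}, and the inherited involution on $A$ is the identity. For the induced presentation, given a computable presentation $(A, \vk{a})$, I would take $(B, \vk{a})$. The sequence generates $B$ as a complex C*-algebra because $B = A + iA$. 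Since each $a_{j}$ is self-adjoint and $B$ is abelian, any rational point of $(B, \vk{a})$ reduces to an expression $u + iv$ with $u, v$ rational points of $(A, \vk{a})$, and by the C*-axiom together with commutativity,
\[
\norm{u + iv}_{B}^{2} = \norm{(u - iv)(u + iv)}_{B} = \norm{u^{2} + v^{2}}_{A},
\]
which is computable from the given presentation of $A$; this also yields the useful bound $\norm{u}_{A} \leq \norm{u + iv}_{B}$.

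For (b) $\Rightarrow$ (a), I would apply \autoref{complexification cc} with $\tau = *$: if $A$ is computably categorical, then so is $(B, *)$ as a complex C*-algebra with conjugation. A short unpacking shows that when $\tau = *$, presentations and computable isomorphisms of $(B, *)$ coincide with those of $B$ as a complex C*-algebra, because $\tau$ is already part of the C*-algebra structure and every $*$-homomorphism automatically preserves it. Conversely, for (a) $\Rightarrow$ (b), I would fix computable presentations $A^{+}$ and $A^{\dagger}$ of $A$, pass to the induced computable presentations $B^{+}$ and $B^{\dagger}$ of $B$, and use (a) to obtain a computable $*$-isomorphism $\varphi \colon B^{+} \to B^{\dagger}$. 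Since $\varphi$ preserves $*$, the restriction $\eta = \varphi|_{A}$ is a real $*$-isomorphism. Given a rational point $r$ of $A^{+}$ and $n \in \mb{N}$, computability of $\varphi$ produces a rational point $x_{n} + iy_{n}$ of $B^{\dagger}$ with $x_{n}, y_{n}$ rational points of $A^{\dagger}$ and $\norm{\eta(r) - (x_{n} + iy_{n})}_{B} < 2^{-n}$; applying the bound $\norm{u}_{A} \leq \norm{u + iv}_{B}$ to $u + iv = (\eta(r) - x_{n}) + i(-y_{n})$ gives $\norm{\eta(r) - x_{n}}_{A} < 2^{-n}$, so $\eta$ is computable.

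Finally, (b) $\Leftrightarrow$ (c) follows from triviality of $*$ on $A$: every rational $*$-polynomial in self-adjoint generators collapses to an ordinary rational real polynomial, so computable presentations and rational points as a real C*-algebra agree with those as a real Banach algebra, and $*$-isomorphisms of real algebras with trivial involution are just algebra isomorphisms. I expect the main technical step to be the verification in (a) $\Rightarrow$ (b) that the restriction $\eta$ is computable; the key leverage is the commutative identity $\norm{u + iv}_{B}^{2} = \norm{u^{2} + v^{2}}_{A}$, which is what lets an $A^{\dagger}$-approximation be extracted from a $B^{\dagger}$-approximation.
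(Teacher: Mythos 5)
Your proposal is correct and follows essentially the same route as the paper: the induced presentation via the commutative identity $\norm{u+iv}^{2}=\norm{u^{2}+v^{2}}$, (b) $\Rightarrow$ (a) through \autoref{complexification cc} with $\tau=*$, (a) $\Rightarrow$ (b) by restricting a computable isomorphism of the induced presentations, and (b) $\Leftrightarrow$ (c) from the involution on $A$ being the identity. Your extra verification that the restriction is computable, using $\norm{u}_{A}\leq\norm{u+iv}_{B}$, is a correct elaboration of a step the paper leaves implicit.
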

\begin{proof}
  Note $A$ is the real C*-algebra determined by $(B,*)$.
  If $(A, (x_n)_{n \in \mb{N}})$ is a presentation of $A$, then we call $(B,(x_n)_{n \in \mb{N}})$ the induced presentation of $B$.
  Furthermore, if $(A,(x_n)_{n \in \mb{N}})$ is computable, then $(B,(x_n)_{n \in \mb{N}})$ is computable since \[\norm{r + is} =\sqrt{\norm{(r+is)^*(r + is)}} = \sqrt{\norm{r^2 + s^2}}\] for all rational points $r$ and $s$ of $(A,(x_n)_{n \in \mb{N}})$.

  (b) $\iff$ (c).
  This follows since $* : A \to A$ is simply the identity map.

  (b) $\implies$ (a).
  By \autoref{complexification cc}, if $A$ is computably categorical as a real C*-algebra, then $(B,*)$ is computably categorical as a complex C*-algebra with conjugation.
  Since the conjugation on $B$ is just the $*$-operation, $B$ is computably categorical as a complex C*-algebra.

  (a) $\implies$ (b)
  Let $A^{+}$ and $A^\dagger$ be computable presentations of $A$.
    Let $B^{+}$ and $B^\dagger$ be the computable presentations on $B$ induced by $A^{+}$ and $A^\dagger$ respectively.
  As $B$ is computably categorical, there exists a computable isomorphism $\varphi : B^{+} \to B^\dagger$.
  Then $\varphi \restriction A : A^{+} \to A^\dagger$ is a computable isomorphism.
\end{proof}

We can achieve a partial converse to~\autoref{cpt proper implies cpt} by extending the result, in~\cite{BazhenovEtal_ComputableStone_2021}, which states that a separable Stone space $Z$ is computably metrizable if and only if $\cts{\mb{R}}{Z}$ has a computable presentation as a real Banach space.
Here, a \inlinedef{Stone space} is a totally disconnected compact Hausdorff space, and we say a separable Stone space $Z$ is \inlinedef{computably metrizable} if it admits a metric $d$ such that $(Z,d)$ has a computable presentation.

\begin{corollary}
  Let $Z$ be a separable Stone space.
  Then the following are equivalent:
  \begin{enumerate}[label= (\alph*)]
  \item $Z$ is computably metrizable,
  \item $\cts{\mb{R}}{Z}$ has a computable presentation as a real Banach space,
  \item $\cts{\mb{R}}{Z}$ has a computable presentation as a real C*-algebra,
  \item $\cts{\mb{C}}{Z}$ has a computable presentation as a complex C*-algebra.
  \end{enumerate}
\end{corollary}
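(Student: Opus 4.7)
The plan is to close the cycle $(a) \implies (c) \implies (b) \implies (a)$ using the cited equivalence $(a) \iff (b)$ from~\cite{BazhenovEtal_ComputableStone_2021} and \autoref{cpt proper implies cpt}, and then to establish $(c) \iff (d)$ separately via \autoref{cc iff sa cc} and \autoref{complexification cc}. The arrow $(c) \implies (b)$ is immediate from the last line of the proof of \autoref{cpt proper implies cpt}: a computable enumeration of rational products of the real C*-algebra special points is a computable dense sequence for the underlying real Banach space.

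For $(a) \implies (c)$, a separable Stone space is compact Hausdorff, so a computable metrization turns it into a compact computably presented metric space. To apply \autoref{cpt proper implies cpt} with $\mb{K} = \mb{R}$ I still need computable properness, equivalently effective compactness in the sense of the remark following that theorem. For Stone spaces the upgrade is standard: the Boolean algebra of clopens is effectively available from the computable metric, so one can enumerate finite clopen partitions of diameter at most $2^{-i}$ and read off the required finite covers of each closed rational ball. Once a computably proper presentation is in hand, \autoref{cpt proper implies cpt} produces $(c)$ directly.

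For $(c) \iff (d)$, use that $\cts{\mb{C}}{Z}$ is abelian with self-adjoint subalgebra $\cts{\mb{R}}{Z}$. The forward direction follows immediately from \autoref{cc iff sa cc}, which shows a computable presentation of $\cts{\mb{R}}{Z}$ as a real C*-algebra induces one of $\cts{\mb{C}}{Z}$ via the identity $\norm{r + is} = \sqrt{\norm{r^{2} + s^{2}}}$. For the reverse, a computable presentation of $\cts{\mb{C}}{Z}$ as a complex C*-algebra is the same data as a computable presentation of the pair $(\cts{\mb{C}}{Z}, *)$, since rational $*$-polynomials already encode $*$, and the fixed-point algebra of $*$ is $\cts{\mb{R}}{Z}$, so \autoref{complexification cc} yields $(c)$. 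The main obstacle is the upgrade from computable metrizability to effective compactness in $(a) \implies (c)$; this should follow from computable Stone duality, but failing a clean citation it can be carried out by hand using the effectively enumerable clopen basis of $Z$.
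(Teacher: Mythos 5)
Your overall architecture matches the paper's: (a) $\iff$ (b) is the cited result of~\cite{BazhenovEtal_ComputableStone_2021}; (c) $\iff$ (d) is exactly what \autoref{cc iff sa cc} together with \autoref{complexification cc} (applied to the conjugation $\tau = *$) give; and (c) $\implies$ (b) by passing to a computable sequence of products of the special points is the same observation the paper makes. The problem is the arrow (a) $\implies$ (c). You claim that from a computable metric presentation of $Z$ ``the Boolean algebra of clopens is effectively available,'' so that one can enumerate finite clopen partitions of diameter at most $2^{-i}$ and thereby verify computable properness of the given presentation. That step is unjustified, and it is precisely the nontrivial content that the paper outsources to~\cite{Harrison-trainorEtal_ComputabilityPolish_2020}. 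Computable metrizability only provides a dense sequence with computable pairwise distances; it gives no effective criterion for recognizing that finitely many rational balls cover $Z$, nor for certifying that two c.e.\ open sets form a clopen partition (emptiness of the co-c.e.\ closed complement is not a c.e.\ event in the absence of effective compactness). In general a computable Polish presentation of a compact, even zero-dimensional, space need not be effectively compact, so any argument that extracts small covers or clopen partitions directly from the given metric presentation cannot work.

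What is true, and what the paper actually uses, is the theorem of~\cite{Harrison-trainorEtal_ComputabilityPolish_2020} that every computably metrizable Stone space admits \emph{some} computably compact (hence, by the remark preceding \autoref{cpt proper implies cpt}, computably proper) presentation --- possibly a different presentation from the one you started with. With that citation substituted for your by-hand upgrade, your proof closes correctly and is essentially the paper's proof: the paper routes the remaining implication as (a) $\implies$ (d) rather than (a) $\implies$ (c), but \autoref{cpt proper implies cpt} yields $\cts{\mb{K}}{Z}$ for either $\mb{K}$, so that difference is cosmetic. If you want to avoid the citation entirely, you would have to reprove that upgrade, which requires genuinely more than ``reading off'' clopen partitions from the metric.
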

\begin{proof}
  (a) $\iff$ (b) is the result in~\cite{BazhenovEtal_ComputableStone_2021}.

  (c) $\iff$ (d) was established above.

  As shown in~\cite{Harrison-trainorEtal_ComputabilityPolish_2020}, every computably metrizable Stone space has a computably compact presentation.
  By \autoref{cpt proper implies cpt}, (a) $\implies$ (d).

  Finally, (c) $\implies$ (b) follows by taking any real C*-algebra presentation which is computable and considering a computable sequence of the products of the special points.
\end{proof}

We also have the following.
\begin{corollary}\label{c01 not cc}
  $\cts{\mb{K}}{[0,1]}$ is not computably categorical as a C*-algebra over $\mb{K}$.
\end{corollary}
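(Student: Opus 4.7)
The plan is to apply Theorem~\ref{cc iff sa cc} with $B = \cts{\mb{C}}{[0,1]}$, which is an abelian complex C*-algebra admitting a computable presentation (as recorded in the examples section). The set of self-adjoint elements of $B$ is exactly the collection of real-valued continuous functions on $[0,1]$, i.e., $A = \cts{\mb{R}}{[0,1]}$. Theorem~\ref{cc iff sa cc} therefore yields the three-way equivalence between computable categoricity of $\cts{\mb{C}}{[0,1]}$ as a complex C*-algebra, $\cts{\mb{R}}{[0,1]}$ as a real C*-algebra, and $\cts{\mb{R}}{[0,1]}$ as a real Banach algebra. Consequently, disproving any one of these three statements simultaneously handles both the real and complex cases of the corollary.

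To refute one of them, I would invoke the result of Melnikov and Ng~\cite{MelnikovNg_ComputableStructures_2016}, already cited in the introduction exactly for this purpose, which produces two computable presentations of $\cts{\mb{R}}{[0,1]}$ between which no computable isomorphism exists. Feeding this through Theorem~\ref{cc iff sa cc} transfers the failure of computable categoricity to all three formulations at once, in particular to $\cts{\mb{K}}{[0,1]}$ as a C*-algebra over $\mb{K}$ for both choices of $\mb{K}$.

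The only subtlety, and the place I would expect to do a bit of careful bookkeeping rather than serious work, lies in aligning the precise structure used by Melnikov and Ng with the real Banach algebra setting invoked by Theorem~\ref{cc iff sa cc}(c). If their statement is phrased at the level of a weaker structure (say, real Banach space or Banach lattice), one checks that pointwise multiplication on $\cts{\mb{R}}{[0,1]}$ is computably recoverable from the norm data on any of their presentations, so that any of their presentations is automatically a computable Banach algebra presentation and any computable Banach algebra isomorphism between them would witness computable categoricity at the weaker level too. This promotion is routine given the uniform continuity of multiplication on bounded sets, and forms the only non-formal step in the argument.
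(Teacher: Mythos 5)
Your argument is exactly the paper's: apply \autoref{cc iff sa cc} with $B = \cts{\mb{C}}{[0,1]}$ and $A = \cts{\mb{R}}{[0,1]}$ to reduce computable categoricity of $\cts{\mb{K}}{[0,1]}$ for both choices of $\mb{K}$ to computable categoricity of $\cts{\mb{R}}{[0,1]}$ as a real Banach algebra, and then quote Melnikov--Ng to refute the latter. Your closing hedge is unneeded (the cited result is already stated in the language of real Banach algebras, which is how the paper uses it), and the promotion you sketch there would not actually be routine—uniform continuity of multiplication does not by itself make products of special points computable from Banach space norm data—but since that fallback is never invoked, the proof as given matches the paper's.
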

\begin{proof}
  By \autoref{cc iff sa cc}, $\cts{\mb{K}}{[0,1]}$ is computably categorical as a C*-algebra over $\mb{K}$ if and only if $\cts{\mb{R}}{[0,1]}$ is computably categorical as a real Banach algebra.
  Melnikov and Ng showed in~\cite{MelnikovNg_ComputableStructures_2016} that $\cts{\mb{R}}{[0,1]}$ is not computably categorical in the language of real Banach algebras.
\end{proof}
This lies in contrast to the group situation, where every finitely generated group that admits a computable presentation is computably categorical (see~\cite{Maltsev_ConstructiveAlgebras_1961}), since $\cts{\mb{K}}{[0,1]}$ is finitely generated with a computable presentation, even one that is finitely c.e.\ as witnessed by \[\cts{\mb{K}}{[0,1]} \cong \urep{\mb{K}}{1,x}{\Iden(1;x),x=x^{*},\norm{x} \leq 1, \norm{1 - x} \leq 1},\] but is not computably categorical.

\section{Computable categoricity of finite-dimensional C*-algebras}\label{sec5}

Although \autoref{c01 not cc} introduces a divergence between computable categoricity results for C*-algebras and for groups, in this section, we establish that finite-dimensional C*-algebras are, as one would expect, computably categorical.

The following is folklore for computable unital presentations, but since we may not have a unit we have to work a little harder.
\begin{lemma}\label{lem: computable point computable spectrum}
  Let $A$ be a C*-algebra over $\mb{K}$ with computable presentation $A^{\dagger}$.
  Let $x$ be a computable point of $A^\dagger$ which is self-adjoint and has finite spectrum $\sigma(x)$.
  Then every element of $\sigma(x)$ is computable.
\end{lemma}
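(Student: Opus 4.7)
The plan is to show that for each rational $q$, the distance $d(q, \sigma(x))$ from $q$ to the spectrum is computable, and then extract each $\lambda \in \sigma(x)$ by evaluating this distance at a rational in the Voronoi cell of $\lambda$.

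For the first step, let $r$ be a computable upper bound on $\norm{x}$, so $\sigma(x) \subseteq [-r, r]$. For rational $q$ and rational $R > r + |q|$, consider the tent function $f_q(t) = R - |t - q|$, which is strictly positive on $[-r, r]$. Applying continuous functional calculus to the self-adjoint element $x$, we have $\norm{f_q(x)} = \max_{\lambda \in \sigma(x)} f_q(\lambda) = R - \min_{\lambda \in \sigma(x)} |\lambda - q| = R - d(q, \sigma(x))$. A computable Weierstrass argument produces a computable sequence of rational $*$-polynomials $p_k$ approximating $f_q$ uniformly on $[-r, r]$ to within $2^{-k}$; the functional calculus bound $\norm{p_k(x) - f_q(x)} \leq \sup_{[-r,r]}|p_k - f_q|$ then yields $\norm{f_q(x)}$, and hence $d(q, \sigma(x)) = R - \norm{f_q(x)}$, as a computable real, uniformly in $q$.

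For the second step, fix $\lambda \in \sigma(x)$ and write $\sigma(x) = \{\lambda_1 < \cdots < \lambda_n\}$ with $\lambda = \lambda_i$. Since $\sigma(x)$ is finite, $\lambda_i$ is an isolated point with an open Voronoi cell of positive length, whose left endpoint is $(\lambda_{i-1} + \lambda_i)/2$ (taking $\lambda_0 = -\infty$ if $i = 1$). By density, pick any rational $q$ with $(\lambda_{i-1} + \lambda_i)/2 < q < \lambda_i$. Then $\lambda_i$ is strictly the closest element of $\sigma(x)$ to $q$, so $d(q, \sigma(x)) = \lambda_i - q$, and therefore $\lambda_i = q + d(q, \sigma(x))$ is computable. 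The choice of $q$ depends on $\lambda_i$, but that is permitted since the lemma only asks for each element to be a computable real individually, not uniformly.

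The main obstacle is the non-unital case, which the paper emphasizes as the nontrivial point: the tent function $f_q$ does not vanish at $0$, so the approximating polynomials $p_k$ acquire constant terms and the elements $p_k(x)$ naturally sit in the unitization $\tilde A$ rather than in $A$. This is handled either by checking that $A^\dagger$ induces a computable presentation of $\tilde A$ (with $1$ adjoined as a special point and the unitization norm computed from its action on rational points of $A^\dagger$), or by replacing $f_q$ by a function of the form $t \mapsto t \cdot g_q(t)$ that vanishes at $0$ while still encoding enough of the spectrum to recover $d(q,\sigma(x))$; in the non-unital setting $0 \in \sigma(x)$ is itself trivially computable, so it suffices to extract the nonzero elements by this adjusted procedure.
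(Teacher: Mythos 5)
Your core strategy (compute $d(q,\sigma(x))$ via the norm of a tent function, then recover each $\lambda_i$ non-uniformly from a rational $q$ in its Voronoi cell) is fine when the unit is available as a computable point, but the crux of the lemma is precisely the case where it is not --- note that even if $A$ happens to be unital, nothing guarantees $1$ is a computable point of $A^\dagger$ (in the paper, computability of the unit in finite-dimensional algebras is only a \emph{corollary} of the categoricity theorem that uses this lemma, so assuming it would be circular). You correctly flag this as the main obstacle, but neither of your proposed fixes is actually carried out, and neither is routine. For the first: the formula $\norm{a+\lambda 1}_{\tilde{A}}=\sup\{\norm{ab+\lambda b}\st \norm{b}\leq 1\}$ only lets you approximate the unitization norm \emph{from below} by evaluating on rational points; computability also requires upper approximations, so ``$A^\dagger$ induces a computable presentation of $\tilde{A}$'' is itself a nontrivial claim needing proof, not a verification. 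For the second: a function of the form $t\mapsto t\,g_q(t)$ does not obviously encode $d(q,\sigma(x))$, because the norm computes $\max_{t\in\sigma(x)}|f(t)|$; multiplying the tent by $t$ destroys the positivity that gave you $\max f=\max|f|$ and entangles $|t|$ with the distance, and for a self-adjoint element you cannot read $\max_{t\in\sigma(x)}f(t)$ off from $\norm{f(x)}$ alone. One can complete this route (e.g.\ take $g_q(t)=|q|-|t-q|$, which vanishes at $0$, approximate it and then the positive-part function by constant-term-free rational polynomials, and recover $\max(|q|-d(q,\sigma(x)),0)$), but that is genuine extra work absent from your proposal; as written, the non-unital case is not proved.

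For comparison, the paper's proof sidesteps functional calculus and the unitization entirely: it inducts on $\card(\sigma(x)\setminus\{0\})$, noting that the extreme spectral value $u$ satisfies $|u|=\norm{x}$, hence $u\in\{\norm{x},-\norm{x}\}$ is a computable real (non-uniformity is exploited here too), and passes to $p(x)=x^2-ux$, which stays in $A$ and remains a computable point because $p$ has no constant term, while strictly decreasing the number of nonzero spectral values; finally $\sigma(x)$ is recovered as real roots of computable quadratics, using that the computable reals form a real closed field. That constant-term-free inductive trick is exactly what makes the missing unit a non-issue, and it is what your argument would need to replace with the (currently unproven) unitization or vanishing-at-zero machinery.
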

\begin{proof}
  We proceed by induction on the size of $\sigma(x) \setminus \{0\}$.
  Clearly, if $\card(\sigma(x) \setminus \{0\}) = 0$, then $\sigma(x) = \{0\}$ where $0$ is computable.

  Otherwise, let $u \in \sigma(x) \setminus \{0\}$ be such that $|u| = \max_{t \in
    \sigma(x)} |t| = \norm{x}$, so $u$ is computable.
  Let $p(z) = z(z-u) \in \mb{R}[z]$.
  Then $p(x) = x^2 - ux$ is a self-adjoint computable point of $A^\dagger$ and \[\card(\sigma(p(x)) \setminus \{0\}) =
    \card(p[\sigma(x)] \setminus \{0\}) < \card(\sigma(x) \setminus \{0\}).\]
  By the inductive hypothesis, every element of $\sigma(p(x))$ is computable.
  Since $\sigma(p(x)) = p[\sigma(x)]$, every element of $\sigma(x)$ is a real root of the computable polynomial $p(z) - w \in \mb{R}[z]$ for some $w \in \sigma(p(x))$.
  It is well known that the computable reals form a real closed field, see~\cite{Pour-elRichards_ComputabilityAnalysis_2017} for details.
  Thus every element of $\sigma(x)$ is computable.
\end{proof}

With this, we are ready to show computable categoricity for finite-dimensional abelian real and complex C*-algebras.
\begin{theorem}\label{abelian fd rcsalgs are cc}
  Every finite-dimensional abelian real C*-algebra is computably categorical as a real C*-algebra.
\end{theorem}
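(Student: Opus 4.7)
By \autoref{structure of fd rcsalgs}, $A$ is isomorphic to $\mb{R}^k \oplus \mb{C}^\ell$ for some fixed $k,\ell \in \mb{N}$, data which we may use non-uniformly. Let $A_0^{\star}$ denote the obviously computable presentation of $\mb{R}^k \oplus \mb{C}^\ell$ whose generators are the minimal projections $p_i^0$ of the $k+\ell$ simple factors together with a fixed skew-adjoint imaginary unit $j_i^0$ in each of the $\ell$ complex factors. My plan is to show that from any computable presentation $A^\dagger$ of $A$ one can uniformly produce computable points $\vk{x}$ in $A^\dagger$ playing the roles of these canonical generators, and then apply \autoref{cor: isomorphic to finite subset} to conclude that $A^\dagger$ and $A_0^{\star}$ are computably isomorphic; applying this to any second computable presentation and composing through $A_0^{\star}$ yields the desired computable categoricity.

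To locate the minimal projections, I would search through self-adjoint rational points $s$ of $A^\dagger$, computing each spectrum $\sigma(s)$ via \autoref{lem: computable point computable spectrum} (applicable since every element of the finite-dimensional $A$ has finite spectrum), until finding one with $|\sigma(s)| = k+\ell$. The maximum spectrum size of a self-adjoint element of $\mb{R}^k\oplus\mb{C}^\ell$ is $k+\ell$, realized on a dense subset, so the search terminates. From $s$ and its listed eigenvalues $\lambda_1,\ldots,\lambda_{k+\ell}$, Lagrange interpolation yields $p_i = \prod_{j \ne i}(s - \lambda_j)/(\lambda_i - \lambda_j)$, which are uniformly computable points of $A^\dagger$ and are exactly the minimal projections of $A$.

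To distinguish $\mb{R}$-type from $\mb{C}$-type projections and extract imaginary units, I would dovetail over pairs $(i,r)$ with $r$ a rational point of $A^\dagger$: compute successive approximations to $\norm{y_{i,r}}$ where $y_{i,r} = \tfrac{1}{2}p_i(r - r^*)p_i$, and flag $i$ once some approximation forces $\norm{y_{i,r}} > 0$. For a $\mb{C}$-type $p_i$ some such $r$ exists since $p_iAp_i \cong \mb{C}$ contains nonzero skew-adjoints, whereas for an $\mb{R}$-type $p_i$ one has $y_{i,r}=0$ for every $r$. Using the non-uniform knowledge that exactly $\ell$ of the $p_i$ are $\mb{C}$-type, I wait until $\ell$ indices are flagged and label the remaining $k$ as $\mb{R}$-type. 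For each flagged pair, $j_i = y_{i,r}/\norm{y_{i,r}}$ is a skew-adjoint element of $p_iAp_i$ satisfying $j_i^2 = -p_i$, uniformly computable from the data.

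Set $\vk{x} = (p_1,\ldots,p_{k+\ell},j_{k+1},\ldots,j_{k+\ell})$, ordering the projections so that the first $k$ are $\mb{R}$-type and the last $\ell$ are $\mb{C}$-type. Then $\vk{x}$ is a uniformly computable sequence generating $A$ and satisfying exactly the relations among the canonical generators of $A_0^\star$, so the assignment $p_i^0 \mapsto p_i$, $j_i^0 \mapsto j_i$ extends to a $*$-isomorphism $\varphi : A_0^\star \to (A,\vk{x})$ under which corresponding rational polynomials have equal norms, whence $\varphi$ and $\varphi^{-1}$ are computable. Composing $\varphi$ with the computable isomorphism $(A,\vk{x}) \to A^\dagger$ supplied by \autoref{cor: isomorphic to finite subset} yields a computable isomorphism $A_0^\star \to A^\dagger$, and composing through $A_0^\star$ gives the required isomorphism between any two computable presentations. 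The main obstacle will be the dovetail of the third paragraph: its termination genuinely relies on knowing the exact count $\ell$, which is the non-uniform information extracted from \autoref{structure of fd rcsalgs}, and one must verify uniform computability of the spectra, Lagrange projections, and normalized skew-adjoint witnesses all the way through.
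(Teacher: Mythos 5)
Your overall strategy---route any computable presentation through a canonical presentation of $\mb{R}^k \oplus \mb{C}^\ell$ by producing computable points that play the roles of the minimal projections and imaginary units, then invoke \autoref{cor: isomorphic to finite subset}---is viable and close in spirit to the paper's argument. But the Lagrange step has a genuine gap: $p_i = \prod_{j \ne i}(s - \lambda_j)/(\lambda_i - \lambda_j)$ is a polynomial in $s$ with a (generically nonzero) constant term, i.e.\ it involves the unit of $A$. Rational, and hence computable, points of $A^\dagger$ are built from $*$-polynomials in the special points \emph{with no constant term}, and the computability of $1$ with respect to an arbitrary computable presentation is not available at this stage --- in the paper it appears only as a corollary of the very categoricity theorem you are proving, which is why \autoref{lem: computable point computable spectrum} is phrased to avoid a unit. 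Worse, if you only demand $|\sigma(s)| = k+\ell$ then $0$ may occur as an eigenvalue, in which case the corresponding spectral projection does not even lie in $\cgen{\mb{R}}{s}$ (the closure of the constant-term-free polynomials in $s$). The repair is easy: require $s$ to have $k+\ell$ distinct \emph{nonzero} eigenvalues (still a dense condition among self-adjoint rational points) and interpolate with polynomials $q_i$ satisfying $q_i(0)=0$ and $q_i(\lambda_j)=\delta_{ij}$, so that $p_i = q_i(s)$ with computable coefficients and no constant term; the same trick exhibits $1$ as a computable point, after which your extraction of the $j_i$ and the final appeal to \autoref{cor: isomorphic to finite subset} go through. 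A secondary issue: \autoref{lem: computable point computable spectrum} gives computability of each spectral value but no effective enumeration or count of $\sigma(s)$, so the termination test $|\sigma(s)|=k+\ell$ in your search needs a separate justification (e.g.\ semi-deciding that the spectrum meets $k+\ell$ disjoint rational intervals via norms of constant-term-free polynomials in $s$), or should simply be fixed non-uniformly, as you already do with $k$ and $\ell$.

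For comparison, the paper sidesteps both difficulties by running the identification in the opposite direction: it fixes a self-adjoint rational point $x$ with distinct nonzero entries and a skew-adjoint rational point $y$ nonzero exactly on the complex factors, notes $\cgen{\mb{R}}{x,y} = A$ so that $(A,x,y)$ is computably isomorphic to $A^\dagger$ by \autoref{cor: isomorphic to finite subset}, and then shows the \emph{coordinates} of $x$ and $y$ are computable numbers (via $\sigma(x)$, $\sigma(y^2)$, and closure properties of the computable reals and complexes), so that $x,y$ are computable points of the standard presentation as well. That direction never constructs projections inside $A^\dagger$ and never needs the unit, which is what your version must patch.
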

\begin{proof}
  Let $A$ be an abelian finite-dimensional real C*-algebra, and let $A^\dagger$ be a computable presentation of $A$.
  By \autoref{structure of fd rcsalgs}, we can identify $A$ with $\mb{R}^k \oplus \mb{C}^m$ for some positive integers $k$ and $m$.

  Let $X$ be the set of self-adjoint rational points of $A^\dagger$, and let $Y$ be the set of skew-adjoint rational points of $A^\dagger$, so the set of rational points of $A^\dagger$ is $X + Y$.
  We know $X$ is dense in $\mb{R}^{k}\oplus\mb{R}^{m}$ and $Y$ is dense in $\{0\}^k \oplus {(i\mb{R})}^m$.
  In particular, there must exist $x \in X$ such that $x$ has distinct nonzero entries, and $y \in Y$ such that only the first $k$ entries are zero.
  We view $\mb{R}^{k} \oplus \mb{R}^{m}$ as a subspace of $\blo{\mb{R}}{\mb{R}^{k} \oplus \mb{R}^{m}}$ where $\mb{R}^{k} \oplus \mb{R}^{m}$ acts on itself by multiplication.
  In this sense, the minimal polynomial of $x$ must be of degree $k+m$ with a nonzero constant term.
  Then $x,\ldots,x^{k+m}$ are linearly independent over $\mb{R}$, so $\cgen{\mb{R}}{x} = \mb{R}^{k}\oplus\mb{R}^{m}$.
  Also, we can find $z \in \mb{R}^{k} \oplus \mb{R}^{m}$ such that the last $m$ entries of $zy$ are $i$.
  Hence $\cgen{\mb{R}}{x,y} = \mb{R}^k \oplus \mb{C}^m$.
  By \autoref{cor: isomorphic to finite subset}, $(A,x,y)$ is a computable presentation computably isomorphic to $A^\dagger$ via the identity map.

  The entries of an element $z \in A$ belong to $\sigma_{A}(z)$ since if $a + ib$ is an entry of $z$ for some $a,b \in \mb{R}$, then $(z - a)^{2} + b^{2}$ is not invertible in $A$.
  Every element of $\sigma(x)$ is computable by \autoref{lem: computable point computable spectrum}, so in particular, every entry of $x$ is computable.
  Similarly, $y^2$ is self-adjoint, so by \autoref{lem: computable point computable spectrum}, every element of $\sigma(y^2)$ is computable.
  Then every element of $\sigma(y)$ is computable, since $\sigma(y^2) = \sigma(y)^2$, and the computable complex numbers form an algebraically closed field, see~\cite{Specker_FundamentalTheorem_1990} for details.
  In particular, every entry of $y$ is computable.
  Thus, by \autoref{cor: isomorphic to finite subset}, $(A,x,y)$ is computably isomorphic to the standard presentation $\mb{R}^k \oplus \mb{C}^m$.

\end{proof}

\begin{corollary}\label{thm: abelian fd csalgs are cc}
  Every finite-dimensional abelian complex C*-algebra is computably categorical as a complex C*-algebra.
\end{corollary}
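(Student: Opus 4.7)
The plan is to reduce this corollary to the preceding theorem via the equivalence established in Theorem~\ref{cc iff sa cc}. Let $B$ be a finite-dimensional abelian complex C*-algebra. First I would invoke Fact~\ref{structure of fd ccsalgs}: since $M_n(\mb{C})$ is abelian only when $n = 1$, we have $B \cong \mb{C}^k$ for some $k \in \mb{N}$. This algebra admits an obvious computable presentation (for instance, via the standard minimal projections together with $1$), so Theorem~\ref{cc iff sa cc} applies to $B$.

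The self-adjoint part $A$ of $B$ is then isomorphic to $\mb{R}^k$, viewed as a finite-dimensional abelian real C*-algebra. By Theorem~\ref{abelian fd rcsalgs are cc}, $A$ is computably categorical as a real C*-algebra. Applying the equivalence (a)~$\iff$~(b) from Theorem~\ref{cc iff sa cc} then yields that $B$ is computably categorical as a complex C*-algebra.

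Since both ingredients are already in place, there is no real obstacle to this argument; the only thing to verify with care is the correspondence between the structural content of Fact~\ref{structure of fd ccsalgs} in the abelian case and the hypothesis ``abelian finite-dimensional real C*-algebra'' needed to invoke Theorem~\ref{abelian fd rcsalgs are cc} on the self-adjoint part. (Note that the proof of Theorem~\ref{abelian fd rcsalgs are cc} covers the more general finite-dimensional abelian real C*-algebra $\mb{R}^k \oplus \mb{C}^m$, so we are in fact using only the special case $m = 0$ here.)
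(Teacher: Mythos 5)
Your argument is correct and matches the paper's own proof: identify the algebra with $\mb{C}^k$ via \autoref{structure of fd ccsalgs}, then transfer computable categoricity from $\mb{R}^k$ (covered by \autoref{abelian fd rcsalgs are cc}) through the equivalence in \autoref{cc iff sa cc}. Your extra remark that $\mb{C}^k$ admits a computable presentation (so the hypothesis of \autoref{cc iff sa cc} is met) is a harmless refinement the paper leaves implicit.
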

\begin{proof}
  By \autoref{structure of fd ccsalgs}, any abelian finite-dimensional C*-algebra can be identified with $\mb{C}^n$ for some $n \in \mb{N}$.
  It can be observed, with the use of \autoref{cc iff sa cc}, that $\mb{C}^n$ is computably categorical as a complex C*-algebra if and only if $\mb{R}^n$ is computably categorical as real C*-algebra.
\end{proof}

Since $\mb{H}$ is one of the building blocks of finite-dimensional real C*-algebras, but is not abelian, we separately show it is computably categorical.
\begin{lemma}\label{quaternions cc}
  $\mb{H}$ is computably categorical as a real C*-algebra.
\end{lemma}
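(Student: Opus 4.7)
The plan is to effectively extract, from an arbitrary computable presentation $\mb{H}^{\dagger}$, four computable points $e, \imath, \jmath, \kappa \in \mb{H}^{\dagger}$ which satisfy the standard quaternion relations, and then to invoke \autoref{cor: isomorphic to finite subset} together with the universal property of $\mb{H}$ to build a computable isomorphism from $\mb{H}^{\dagger}$ to the standard presentation $(\mb{H}, (1, i, j, k))$.

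To produce $e$, I would use that the self-adjoint part of $\mb{H}$ is $\mb{R}\cdot 1$, so that for every nonzero $r \in \mb{H}$ the element $r^{*}r$ equals $\norm{r}^{2}\cdot 1$. The first search enumerates rational points of $\mb{H}^{\dagger}$ in parallel with their norm approximations until it returns one $r$ with $\norm{r} > \tfrac{1}{2}$; such $r$ exists because rational points are dense and $\norm{1}=1$. Then $e := r^{*}r/\norm{r^{*}r}$ is a computable point equal to $1$. To produce $\imath$, I would search for a rational point $s$ with $\norm{s - s^{*}} > \tfrac{1}{2}$ and put $\imath := (s-s^{*})/\norm{s-s^{*}}$; this is a skew-adjoint computable point of norm one, so $\imath^{*}\imath = e$ forces $\imath^{2} = -e$.

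For $\jmath$, I would exploit the identity that for any purely imaginary $u \in \mb{H}$ the quantity $\tfrac{1}{2}(u + \imath u \imath)$ equals the component of $u$ orthogonal to $\imath$ among the imaginary quaternions; this follows from the anticommutation $\imath v = -v \imath$ whenever $v \perp \imath$ in the imaginary part, together with $\imath^{3} = -\imath$. The third search looks for a rational point $t$ such that $p := \tfrac{1}{2}\bigl((t-t^{*}) + \imath(t-t^{*})\imath\bigr)$ has $\norm{p} > \tfrac{1}{4}$; such $t$ exists since rational points can approximate $j$, whose projection off $i$ is $j$ itself. Then $\jmath := p/\norm{p}$ is skew-adjoint of norm one and orthogonal to $\imath$, hence satisfies $\jmath^{2} = -e$ and $\imath\jmath = -\jmath\imath$. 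Setting $\kappa := \imath\jmath$ completes a quadruple satisfying the full quaternion relations.

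Since $e, \imath, \jmath, \kappa$ span $\mb{H}$ as a real vector space, they certainly generate $\mb{H}$ as a real C*-algebra, and \autoref{cor: isomorphic to finite subset} then yields that $(\mb{H}, (e,\imath,\jmath,\kappa))$ is a computable presentation computably isomorphic to $\mb{H}^{\dagger}$ via the identity map. The universal property of $\mb{H}$ guarantees that $1 \mapsto e$, $i \mapsto \imath$, $j \mapsto \jmath$, $k \mapsto \kappa$ extends to a $*$-isomorphism from the standard presentation $(\mb{H},(1,i,j,k))$ onto $(\mb{H}, (e,\imath,\jmath,\kappa))$; this map sends rational points to rational points, so it is trivially computable, and its inverse is then computable as well. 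The only things needing care are confirming that each of the three searches terminates and that none of the divisors vanishes, both of which follow easily from density of the rational points in $\mb{H}$ together with computability of $\mb{H}^{\dagger}$; there is no substantial obstacle beyond these bookkeeping verifications.
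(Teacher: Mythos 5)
Your proposal is correct and follows essentially the same route as the paper: extract a computable copy of $1$ and two orthonormal skew-adjoint computable points satisfying the quaternion relations, then conclude via \autoref{cor: isomorphic to finite subset} and the resulting automorphism of $\mb{H}$. Your explicit projection $\tfrac{1}{2}\bigl(u + \imath u \imath\bigr)$ is exactly the Gram--Schmidt step the paper performs with the inner product $\langle a,b\rangle_{\mb{R}} = \tfrac{1}{2}(a^{*}b + b^{*}a)$, so the only cosmetic difference is your appeal to a ``universal property'' of $\mb{H}$ where one should just say that a quadruple satisfying the quaternion relations determines a $*$-automorphism.
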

\begin{proof}
  Let $\mb{H}^\dagger$ be a computable presentation of $\mb{H}$.

  For any nonzero self-adjoint computable point $x$ of $\mb{H}^\dagger$, we know $x \in \mb{R}$, and $\norm{x}$ is computable.
  Hence $1 = \sgn(x)\frac{x}{\norm{x}}$ is a computable point of $\mb{H}^\dagger$.

  We apply the Gram-Schmidt process to a pair of $\mb{R}$-linearly independent skew-adjoint computable points of $\mb{H}^\dagger$, noting that $\langle a,b\rangle_{\mb{R}} = \frac{1}{2}(a^*b + b^*a)$ is a computable operation, to get a pair $p,q$ of $\mb{R}$-orthonormal skew-adjoint computable points of $\mb{H}^\dagger$.

  Then $p^2 = -p^*p = -1 = -q^*q = q^2$, and $0 = \langle p, q \rangle_{\mb{R}} = -\frac{1}{2}(pq + qp)$ so $pq = -qp$.
  Thus there is an automorphism $\varphi$ of $\mb{H}$ which sends $1 \mapsto 1$, $p \mapsto i$, $q \mapsto j$, and $pq \mapsto k$.
  By \autoref{cor: isomorphic to finite subset}, $(\mb{H},1,p,q)$ is a computable presentation computably isomorphic to $\mb{H}^{\dagger}$.
  Therefore, $\varphi$ is a computable isomorphism from $\mb{H}^\dagger$ to the standard presentation $\mb{H}$.
\end{proof}

{\bigskip}
From the rigid characterization of subalgebras generated by a self-adjoint element, we are able to find a finite set of computable minimal projections which spans the set of self-adjoint elements.

Let $A$ be a C*-algebra over $\mb{K}$ with computable presentation $A^{\dagger}$.
Let $p$ be a computable projection in $A^{\dagger}$.
Then $pAp$ is a C*-algebra over $\mb{K}$, and $A^{\dagger}$ induces a presentation $pA^{\dagger}p$ on $pAp$ formed from the products $pz_{1}\cdots z_{n}p$ where $z_{1},\ldots,z_{n}$ are special points of $A^{\dagger}$.
By \autoref{lem: unif comp substructure}, $pA^{\dagger}p$ is a computable presentation of $pAp$ and the inclusion map from $pA^{\dagger}p$ to $A^{\dagger}$ is computable.
\begin{theorem}\label{mp span sa}
  Let $A$ be a finite-dimensional C*-algebra over $\mb{K}$ with computable presentation $A^\dagger$.
  Then there is a finite set $M$ of computable minimal projections in $A^\dagger$ such that $\vspan{\mb{R}}{M}$ is the set of self-adjoint elements of $A$.
\end{theorem}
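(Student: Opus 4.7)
The plan is to induct on $\dim_{\mb{K}} A$. In the inductive step I will reduce to smaller compressed subalgebras via the spectral decompositions of a spanning set of computable self-adjoint elements.

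Concretely, because the map $r \mapsto \tfrac{1}{2}(r + r^*)$ sends rational points of $A^{\dagger}$ to computable self-adjoint points and $A_{sa}$ is a finite-dimensional real vector space, I can select finitely many computable self-adjoint points $s_1,\ldots,s_N$ of $A^{\dagger}$ that $\mb{R}$-span $A_{sa}$. Each $s_j$ has finite spectrum, and by \autoref{lem: computable point computable spectrum} every element of $\sigma(s_j)$ is computable. Using Lagrange interpolation,
\[
p_{j,\lambda} = \prod_{\mu \in \sigma(s_j) \setminus \{\lambda\}} \frac{s_j - \mu}{\lambda - \mu},
\]
each spectral projection $p_{j,\lambda}$ is a computable point of $A^{\dagger}$, and by the finite-dimensional spectral theorem $s_j = \sum_{\lambda} \lambda\, p_{j,\lambda}$ and $\sum_{\lambda} p_{j,\lambda} = 1_A$, so $\{p_{j,\lambda}\}_{j,\lambda}$ spans $A_{sa}$ over $\mb{R}$.

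For the recursion, fix $p = p_{j,\lambda}$ with $p \neq 0$ and $p \neq 1_A$. Then $pAp$ has strictly smaller $\mb{K}$-dimension than $A$, and the remarks preceding the theorem furnish a computable presentation $pA^{\dagger}p$ of $pAp$ together with a computable inclusion into $A^{\dagger}$. By the inductive hypothesis there is a finite set $M_{p}$ of computable minimal projections of $pA^{\dagger}p$ with $\vspan{\mb{R}}{M_{p}} = (pAp)_{sa}$. Since $q \leq p$ implies $qAq = q(pAp)q$, each $q \in M_p$ is minimal in $A$, and in particular $p \in (pAp)_{sa} \subseteq \vspan{\mb{R}}{M_p}$. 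Let $M$ be the union of all such $M_p$.

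The main obstacle is the edge case in which no recursion is invoked, i.e., every $p_{j,\lambda}$ equals $1_A$; this happens exactly when every $s_j$ is a scalar multiple of $1_A$, so $A_{sa} = \mb{R} \cdot 1_A$ and the only nonzero projection of $A$ is $1_A$ itself. In this case I want to take $M = \{1_A\}$, which requires $1_A$ to be a computable point, and I extract it as follows: either $A_{sa} = 0$, whence $A = 0$ and $M = \emptyset$ works, or some $s_j \neq 0$ is of the form $s_j = \lambda_j 1_A$ with $\lambda_j \in \sigma(s_j)$ computable and nonzero, giving $1_A = \lambda_j^{-1} s_j$. In the non-edge case the same observation shows $1_A = \sum_\lambda p_{j',\lambda} \in \vspan{\mb{R}}{M}$ for any non-scalar $s_{j'}$, so each scalar $s_j = \lambda_j 1_A$ also lies in $\vspan{\mb{R}}{M}$; combined with the non-scalar $s_j = \sum_\lambda \lambda\, p_{j,\lambda}$, we conclude $\vspan{\mb{R}}{M} \supseteq \{s_1,\ldots,s_N\}$ and hence $\vspan{\mb{R}}{M} = A_{sa}$.
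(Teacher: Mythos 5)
Your overall strategy is the same as the paper's (induction on dimension, spanning the self-adjoint part by spectral projections of computable self-adjoint points, then recursing into the corners $pAp$), but there is a genuine gap at the step ``each spectral projection $p_{j,\lambda}$ is a computable point of $A^{\dagger}$.'' Rational points of $A^{\dagger}$ are by definition $*$-polynomials \emph{with no constant term} in the special points, and the computability of the unit $1_A$ is not available at this stage of the development: the paper only deduces it at the very end, as a corollary of computable categoricity, and its own proof of this theorem explicitly splits into the cases ``$1$ is $A^{\dagger}$-computable'' and ``$1$ is not.'' Your Lagrange interpolant $\prod_{\mu \neq \lambda}(s_j - \mu)/(\lambda - \mu)$ in general has a nonzero constant term, so evaluating it at $s_j$ produces an element of the form $q(s_j) + c\,1_A$ with $q$ constant-term-free and $c \neq 0$, and the computability of such an element is \emph{equivalent} to the computability of $1_A$, which you never establish. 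The issue is unavoidable as you have set things up for the eigenvalue-$0$ projection (there $c = 1$), and you lean on exactly that projection in your last step: the claim $1_A = \sum_{\lambda} p_{j',\lambda} \in \vspan{\mb{R}}{M}$ requires the kernel projection of a non-invertible $s_{j'}$ to have been computable and hence recursed on, which you have not shown.

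The gap is repairable inside your framework. For $\lambda \neq 0$ use the interpolant $\frac{X}{\lambda}\prod_{\mu \in \sigma(s_j)\setminus\{0,\lambda\}}\frac{X-\mu}{\lambda-\mu}$, which has zero constant term and computable coefficients, so the spectral projections at \emph{nonzero} eigenvalues are honestly computable; since $s_j = \sum_{\lambda \neq 0} \lambda\, p_{j,\lambda}$, these already suffice to put each non-scalar $s_j$ into $\vspan{\mb{R}}{M}$. What remains is $1_A$ (needed for your scalar $s_j$'s), and this must be argued separately: for instance, if some computable self-adjoint point $z$ is invertible (e.g.\ a nonzero scalar multiple of $1_A$, or any computable self-adjoint $z$ with $\norm{z - 1_A} < 1/2$), then its minimal polynomial has roots in $\sigma(z)$, which are computable by \autoref{lem: computable point computable spectrum}, and nonzero constant term, so $1_A$ is a constant-term-free polynomial in $z$ with computable coefficients and is therefore a computable point; with that in hand your original argument goes through. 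This is in effect what the paper does: it extracts the minimal projections of the non-unital subalgebra $\cgen{\mb{K}}{z}$ (precisely the spectral projections at nonzero eigenvalues) via \autoref{abelian fd rcsalgs are cc}, thereby never invoking $1_A$, and it handles the computability of the identity by an explicit case split.
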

\begin{proof}
  We proceed by induction on the $\mb{K}$-dimension of $A$.
  Let $X$ be the set of self-adjoint elements of $A$.
  Let $Z$ be the subset of $A^\dagger$-computable points of $X$, so $Z$ is dense in $X$.

  For $z \in Z$, $\cgen{\mb{K}}{z} \cong \mb{K}^n$ for some $n \in \mb{N}$.
  By \autoref{abelian fd rcsalgs are cc} or \autoref{thm: abelian fd csalgs are cc}, $\mb{K}^{n}$ is computably categorical as a C*-algebra over $\mb{K}$.
  In particular, there must be a family $F$ of minimal projections in $\cgen{\mb{K}}{z}$, each computable with respect to $(\cgen{\mb{K}}{z},z)$, such that $z \in \vspan{\mb{R}}{F}$.
  Since every projection in $F$ is computable with respect to $(\cgen{\mb{K}}{z},z)$, they must be also computable with respect to $A^\dagger$ by \autoref{lem: unif comp substructure}.

  If $1$ is a minimal projection in $A$, then $X = \mb{R} = \vspan{\mb{R}}{1}$ and $1$ is $A^{\dagger}$-computable.
  In that case, we just let $M = \{1\}$.

  Suppose $1$ is not a minimal projection in $A$.
  Then there exists $z \in Z$ such that $\cgen{\mb{K}}{z}$ is not isomorphic to $\mb{K}$, so must contain a copy of $\mb{K} \oplus \mb{K}$.
  Hence, there is a nontrivial computable projection $q$ in $A^\dagger$.
  Let $N$ be the set of all nontrivial projections computable with respect to $A^\dagger$.
  If $1$ is $A^\dagger$-computable, then $1 - q \in N$, so $\vspan{\mb{R}}{N} = \vspan{\mb{R}}{N \cup \{1\}} = \vspan{\mb{R}}{Z} = X$.
  If $1$ is not $A^\dagger$-computable, we directly see $\vspan{\mb{R}}{N} = \vspan{\mb{R}}{Z} = X$.

  Let $K$ be a finite subset of $N$ such that $\vspan{\mb{R}}{K} = \vspan{\mb{R}}{N} = X$.
  For any $p \in K$, $pAp$ is a C*-subalgebra of $A$ over $\mb{K}$ of strictly less dimension.
  By the inductive hypothesis, there is a finite set $M_p$ of computable minimal projections in $pA^\dagger p$ such that $\vspan{\mb{R}}{M_p}$ is the set of self-adjoint elements of $pAp$.
  In particular, $p \in \vspan{\mb{R}}{M_p}$.
  Every projection $r$ which is minimal in $pAp$ is also minimal in $A$ since for $t \in A$ if $t \leq r$, then $t \leq p$, so $t \in pAp$.
  Also, each $r \in M_p$ is computable with respect to $A^\dagger$.
  If we let $M = \bigcup_{p \in K} M_p$, then $M$ satisfies the required conditions.
\end{proof}

As an immediate application, we can reduce the computable categoricity of a direct sum to the computable categoricity of its summands.

Let $A$ and $B$ be C*-algebras over $\mb{K}$ with computable presentations $A^{\dagger}$ and $B^{\dagger}$ respectively.
These presentations induce a computable presentation $A^{\dagger}\oplus B^{\dagger}$ on $A \oplus B$ formed from points $(z,0)$ and $(0,w)$ where $z$ is a special point of $A^{\dagger}$ and $w$ is a special point of $B^{\dagger}$.

\begin{theorem}\label{thm: direct sum of ccs is cc}
  Let $A$ and $B$ be finite-dimensional C*-algebras over $\mb{K}$ which are computably
  categorical as C*-algebras over $\mb{K}$.
  Then $A \oplus B$ is computably categorical as a C*-algebra over $\mb{K}$.
\end{theorem}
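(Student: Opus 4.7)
The plan is to reduce to the computable categoricity of the summands by finding a splitting central projection in each presentation. Specifically, in any computable presentation $(A \oplus B)^{\ddagger}$, I will produce a computable central projection $e \in A \oplus B$ satisfying $e(A \oplus B) \cong A$ and $(1-e)(A \oplus B) \cong B$. Given such projections $e^+$ and $e^\dagger$ in two computable presentations $(A \oplus B)^+$ and $(A \oplus B)^\dagger$, the compression construction preceding \autoref{mp span sa} yields computable presentations $A'^+ := e^+(A \oplus B)^+ e^+$ and $A'^\dagger := e^\dagger(A \oplus B)^\dagger e^\dagger$ of (copies of) $A$, and similarly $B'^+$ and $B'^\dagger$ of $B$. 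Computable categoricity of $A$ and $B$ then furnishes computable isomorphisms $\varphi_A : A'^+ \to A'^\dagger$ and $\varphi_B : B'^+ \to B'^\dagger$, and one checks that the combined map $\varphi(x) := \varphi_A(e^+ x) + \varphi_B((1-e^+)x)$ is a computable $*$-isomorphism from $(A \oplus B)^+$ to $(A \oplus B)^\dagger$: centrality of $e^+$ makes $\varphi$ multiplicative, and orthogonality of the ranges $e^\dagger(A \oplus B)$ and $(1-e^\dagger)(A \oplus B)$ in the target kills the cross terms.

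The crux of the argument is thus the construction of $e^+$ as a computable element. I would first invoke \autoref{mp span sa} to obtain computable minimal projections $p_1, \ldots, p_n$ whose $\mb{R}$-span equals the self-adjoint part of $A \oplus B$. Since every minimal central projection is self-adjoint (as a projection) and is the identity of a single simple summand, each lies in this real span. Within the finite-dimensional span they are cut out by the finitely many commutativity conditions $[c, p_k] = 0$, which form a computable linear system on the $\mb{R}$-coefficients; effective linear algebra then locates the minimal central projections $c_1, \ldots, c_s$ as computable points. The structure theorems \autoref{structure of fd rcsalgs} and \autoref{structure of fd ccsalgs} determine the multiset of simple-summand isomorphism types occurring in $A$ and in $B$, and I would match each $c_j$ to its type by extracting computable invariants of $c_j(A \oplus B)$ — its $\mb{K}$-dimension, the isomorphism class of its center (distinguishing $\mb{R}$, $\mb{C}$, $\mb{H}$ in the real case), and the maximal size of an orthogonal family of minimal subprojections. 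Summing those $c_j$ whose types make up $A$ produces $e^+$; when $A$ and $B$ share summand types the choice is not unique, but any consistent grouping gives $e^+(A \oplus B) \cong A$ as required.

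The hard part will be executing the effective linear algebra and invariant extraction in the second paragraph rigorously. Although $A \oplus B$ is finite-dimensional and all relevant quantities — norms, dimensions, orthogonality relations — are computable in principle from the input presentation, turning this into a usable procedure requires combining the uniformly computable spanning family from \autoref{mp span sa} with effective estimates strong enough to detect when linear combinations vanish, when projections are mutually orthogonal, and when compressed subspaces have the correct dimension. Once the computable splitting projection $e^+$ is produced, assembling the final isomorphism via the hypothesis on $A$ and $B$ is routine along the lines sketched in the first paragraph.
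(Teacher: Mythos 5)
Your overall strategy---split $A \oplus B$ along a computable central projection, compress, and invoke categoricity of the summands---is viable and genuinely different from the paper's argument, but the decisive step is exactly where the proposal stops: the claim that the minimal central projections are computable points of an arbitrary computable presentation. The sentence ``effective linear algebra then locates the minimal central projections $c_1,\ldots,c_s$ as computable points'' conceals two real difficulties. First, a linear system whose coefficients are merely computable reals cannot be solved effectively in general, because equality of computable reals is undecidable: you cannot detect exact vanishing of linear combinations or determine ranks from approximations alone. To repair this you must exploit the non-effectively known isomorphism type of $A \oplus B$ (which fixes the real dimension of the self-adjoint part and the number $s$ of simple summands), search for a nonvanishing minor of the known correct size (a semi-decidable condition that is guaranteed to succeed), and only then write down a computable basis of the commutant-within-the-span. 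Second, even with a computable basis of the self-adjoint part of the center in hand, the minimal central projections are not cut out by linear conditions at all; extracting them from this copy of $\mb{R}^s$ needs the spectral machinery of \autoref{lem: computable point computable spectrum}, or an appeal to \autoref{abelian fd rcsalgs are cc} applied to the computable presentation of the center furnished by \autoref{lem: unif comp substructure}. In the real case you also need the (true, but unaddressed) fact that a self-adjoint element commuting with every self-adjoint element is central, since the skew parts of summands of type $\mb{C}$ or $\mb{H}$ do not lie in $\vspan{\mb{R}}{M}$. You flag all of this as ``the hard part,'' but on your route it is the entire content of the theorem, so as written the proposal is a plan with its crucial step missing, even though I believe the gap is fillable with the tools already in the paper.

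For contrast, the paper sidesteps central projections entirely: each minimal projection supplied by \autoref{mp span sa} already lies inside a single summand, so one partitions $M$ into $M_A$ and $M_B$ (non-effectively, which is permitted, since computable categoricity is a non-uniform notion) and takes all corner products $q z_1 \cdots z_n p$ with $p,q$ on the same side; because $1 \in \vspan{\mb{R}}{M}$ and cross corners vanish, these products generate $A$ and $B$ separately, so \autoref{lem: unif comp substructure} and \autoref{cor: isomorphic to finite subset} immediately yield computable presentations $(A,Z_A)$ and $(B,Z_B)$ whose direct sum is computably isomorphic to the given presentation, and categoricity of the summands finishes the proof with no effective linear algebra at all. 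For the same reason, your second-paragraph worry about effectively matching each $c_j$ to a summand type via computable invariants is unnecessary: you may simply fix a correct grouping non-uniformly.
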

\begin{proof}

  Let ${(A \oplus B)}^+$ be a computable presentation of $A \oplus B$.
  By \autoref{mp span sa}, there is a finite set $M$ of computable minimal projections in ${(A \oplus B)}^+$ such that $\vspan{\mb{R}}{M}$ is the set of self-adjoint elements of $A \oplus B$.
  Any minimal projection in $A \oplus B$ must belong to either $A$ or $B$ by minimality.
  For $X = A$ or $X = B$, let $M_{X}$ be the subset of $M$ which belongs to $X$.
  Also, let $Z_{X}$ be the set of products $qz_{1} \cdots z_{n}p$ where $z_{1},\ldots,z_{n}$ are special points of $(A \oplus B)^{+}$ and $p,q \in M_{X}$.
  Note $Z_A \subseteq A$ and $Z_B \subseteq B$.
  Since $1 \in \vspan{\mb{R}}{M}$, we have $\cgen{\mb{K}}{Z_A \cup Z_B} = A \oplus B$.
  Hence $\cgen{\mb{K}}{Z_A} = A$ and $\cgen{\mb{K}}{Z_B} = B$.
  Then $(A, Z_A)$ and $(B, Z_B)$ are computable presentations of $A$ and $B$ respectively by \autoref{lem: unif comp substructure}, and $(A,Z_{A}) \oplus (B,Z_{B})$ is a computable presentation computably isomorphic to $(A \oplus B)^{+}$ by \autoref{cor: isomorphic to finite subset}.

  Let $A^{\dagger}$ and $B^{\dagger}$ be computable presentations of $A$ and $B$ respectively.
  Since $A$ is computably categorical, there exists a computable isomorphism $\varphi$ from $A^\dagger$ to $(A, Z_A)$.
  Similarly, since $B$ is computably categorical, there exists a computable isomorphism $\psi$ from $B^\dagger$ to $(B, Z_B)$.
  Then $\varphi \oplus \psi : A^\dagger \oplus B^\dagger \to {(A \oplus B)}^+$ is a computable isomorphism.
\end{proof}

{\bigskip}
Thus, by \autoref{structure of fd ccsalgs} and \autoref{structure of fd rcsalgs}, in order to prove computable categoricity for arbitrary finite-dimensional C*-algebras, it suffices to consider the matrix algebras.
We would like to induct on the dimension of the matrix algebra, but first we need a way to access matrix algebras of smaller dimension.

Let $\mb{D}$ denote $\mb{R}$, $\mb{H}$, or $\mb{C}$ as a C*-algebra over corresponding $\mb{K}$.
\begin{lemma}\label{rec ma}
  Let $\mc{H}$ be a Hilbert space over $\mb{D}$.
  Let ${(p_i)}_{i=1}^n$ be a sequence of minimal projections in $\blo{\mb{D}}{\mc{H}}$.
  If $p_{m+1}$ does not commute with $q_m := \bigvee_{i=1}^m p_i$ for any $1 \leq m < n$, then $\cgen{\mb{K}}{\bigcup_{i=1}^n p_i\blo{\mb{D}}{\mc{H}}p_i} \cong M_n(\mb{D})$.
\end{lemma}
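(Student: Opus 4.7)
The plan is to apply the Von Neumann double commutant theorem (\autoref{vndct}) and reduce the claim to a commutant computation. Write $A := \kcgen{\bigcup_{i=1}^n p_i\blo{\mb{D}}{\mc{H}}p_i}$ and choose unit vectors $e_i \in \mc{H}$ with $p_i\mc{H} = e_i\mb{D}$. The hypothesis that $p_{m+1}$ fails to commute with $q_m$ rules out both $e_{m+1} \in q_m\mc{H}$ and $q_m e_{m+1} = 0$, since either equality would force $p_{m+1}$ and $q_m$ to commute. Applied inductively, the first of these gives $\mb{D}$-linear independence of $e_1,\ldots,e_n$, so $q_n\mc{H}$ has $\mb{D}$-dimension $n$ and $\blo{\mb{D}}{q_n\mc{H}} \cong M_n(\mb{D})$; the second is set aside for later use.

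Each generator $p_i x p_i$ is $\mb{D}$-linear and supported in $p_i\mc{H} \subseteq q_n\mc{H}$, while $q_n \in A$ by continuous functional calculus applied to the positive element $\sum_i p_i \in A$ (which has finite spectrum), so $A$ is a unital C*-subalgebra of $\blo{\mb{D}}{q_n\mc{H}}$. Being finite-dimensional, $A$ is weakly closed in $\blo{\mb{K}}{q_n\mc{H}}$, so $A = A''$ by \autoref{vndct}. Since $\blo{\mb{D}}{q_n\mc{H}}$ is likewise a finite-dimensional von Neumann algebra whose commutant equals $\{R_a \st a \in \mb{D}\}$ (from the earlier linear algebra discussion), a second application of \autoref{vndct} yields $\{R_a \st a \in \mb{D}\}' = \blo{\mb{D}}{q_n\mc{H}}$. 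Thus it suffices to prove $A' = \{R_a \st a \in \mb{D}\}$ in $\blo{\mb{K}}{q_n\mc{H}}$; the inclusion $\supseteq$ is immediate. For the reverse, given $T \in A'$, commutation with each $p_i$ shows that $T$ preserves $p_i\mc{H}$, and identifying $p_i\mc{H}$ with $\mb{D}$ via $e_i d \leftrightarrow d$, the corner $p_i\blo{\mb{D}}{\mc{H}}p_i \cong \mb{D}$ acts as all left multiplications. A $\mb{K}$-linear self-map of $\mb{D}$ commuting with every left multiplication is right multiplication by a scalar, producing $\alpha_i \in \mb{D}$ with $T|_{p_i\mc{H}} = R_{\alpha_i}|_{p_i\mc{H}}$.

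The main obstacle is showing that all the $\alpha_i$ coincide, and this is the only place where the non-commutation hypothesis really does its work. I induct on $m$: suppose $\alpha_1 = \cdots = \alpha_m =: \alpha$. By $\mb{K}$-linearity, $T$ and $R_\alpha$ then agree on $\sum_{i \leq m} p_i\mc{H} = q_m\mc{H}$. Since $q_m, p_{m+1} \in A$, the product $q_m p_{m+1} \in A$ commutes with $T$, and evaluating at $v := e_{m+1}$ produces
\[T(q_m p_{m+1} v) = T(q_m e_{m+1}) = (q_m e_{m+1})\alpha\]
on one side and
\[(q_m p_{m+1}) T(v) = q_m p_{m+1}(e_{m+1}\alpha_{m+1}) = (q_m e_{m+1})\alpha_{m+1}\]
on the other. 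Hence $(q_m e_{m+1})(\alpha - \alpha_{m+1}) = 0$; since $q_m e_{m+1} \neq 0$ by the non-commutation hypothesis, and right multiplication by any nonzero element of the division ring $\mb{D}$ is injective, $\alpha_{m+1} = \alpha$. The induction then gives $T = R_\alpha$ on all of $q_n\mc{H}$, completing the proof.
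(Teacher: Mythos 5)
Your proof is correct, and at its core it runs on the same engine as the paper's: identify the commutant of the generated algebra inside $\blo{\mb{K}}{q_n\mc{H}}$ with the right multiplications $\{R_a \st a \in \mb{D}\}$, using the non-commutation hypothesis (equivalently $q_m e_{m+1} \neq 0$) to force the block scalars to agree, and then conclude with the double commutant theorem (\autoref{vndct}). The organization differs, though: the paper inducts on $n$, using the inductive isomorphism with $M_m(\mb{D})$ and a dimension count to identify the partial algebra with all of $\blo{\mb{D}}{q_m\mc{H}}$ at each stage, so the needed commutant facts and unitality come packaged with that identification; you instead run a single global commutant computation, deducing $T\restriction p_i\mc{H} = R_{\alpha_i}$ directly from the fact that each corner $p_i\blo{\mb{D}}{\mc{H}}p_i \cong \mb{D}$ acts on the one-dimensional block as all left multiplications, and then gluing the $\alpha_i$ by an inner induction via $q_m p_{m+1} \in A$. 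The price of the one-shot version is that you must separately supply what the paper's induction gives implicitly — that $q_m$ lies in the generated algebra (your functional-calculus argument for $q_n$ works verbatim for each $q_m$, and you should say so, since you invoke $q_m \in A$ in the gluing step) and that $e_1,\ldots,e_n$ are $\mb{D}$-independent so that $\blo{\mb{D}}{q_n\mc{H}} \cong M_n(\mb{D})$ — but in exchange your write-up makes explicit the unitality and finite-dimensionality hypotheses needed to apply \autoref{vndct}, which the paper leaves tacit.
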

\begin{proof}
  We proceed by induction on the length of the sequence.
  We denote $\blo{\mb{D}}{\mc{H}}$ by $A$ for clarity.

  It can be observed that $\cgen{\mb{K}}{pAp} = pAp = \blo{\mb{D}}{p\mc{H}} \cong \mb{D}$ for any minimal projection $p$ in $A$.

  Now, let ${(p_i)}_{i=1}^{n+1}$ be as stated such that $\cgen{\mb{K}}{\bigcup_{i=1}^n p_i A p_i} \cong M_n(\mb{D})$.
  Since $M_n(\mb{D}) \cong \cgen{\mb{K}}{\bigcup_{i=1}^n p_i A p_i} \subseteq \blo{\mb{D}}{q_n\mc{H}}$, by dimensionality we must have $\kcgen{\bigcup_{i=1}^n p_i A p_i} = \blo{\mb{D}}{q_n\mc{H}}$.
  Then $q_n$ must be of $\mb{D}$-rank $n$, so $q_{n+1}$ is of $\mb{D}$-rank $n+1$ as $p_{n+1}$ does not commute with $q_n$.
  Hence \[\cgen[\bigg]{\mb{K}}{\bigcup_{i=1}^{n+1} p_i A p_i} \subseteq \blo{\mb{D}}{q_{n+1}\mc{H}} \cong M_{n+1}(\mb{D}).\]

  For each $a \in \mb{D}$, we let $R_{a} : \mc{H} \to \mc{H}$ be multiplication on the right by $a$.
  Let $w \in \blo{\mb{K}}{q_{n+1}\mc{H}}$ commute with $\cgen{\mb{K}}{\bigcup_{i=1}^{n+1} p_i A p_i}$.
  In particular, $wq_n$ belongs to the commutant of $\blo{\mb{D}}{q_n\mc{H}}$ in $\blo{\mb{K}}{q_n\mc{H}}$, so is of the form $R_x q_n$ for some $x \in \mb{D}$.
  Also, $wp_{n+1}$ belongs to the commutant of $\blo{\mb{D}}{p_{n+1}\mc{H}} = p_{n+1}Ap_{n+1}$ in $\blo{\mb{K}}{p_{n+1}\mc{H}}$, so is of the form $R_y p_{n+1}$ for some $y \in \mb{D}$.
  As $w$ commutes with $p_{n+1}$, we have that \[R_x p_{n+1}q_n = p_{n+1}R_x q_n = p_{n+1}wq_n = wp_{n+1}q_n = R_y p_{n+1}q_n.\]
  Since $p_{n+1}$ does not commute with $q_n$, $p_{n+1}q_n$ is nonzero, so $x = y$.
  Then $w = R_{x}q_{n+1}$ since $q_{n+1}\mc{H} = p_{n+1}\mc{H} + q_n\mc{H}$.
  Hence $w$ commutes with $\blo{\mb{D}}{q_{n+1}\mc{H}}$.
  Thus, by \autoref{vndct},
  \[\cgen[\bigg]{\mb{K}}{\bigcup_{i=1}^{n+1} p_i A p_i} = \cgen[\bigg]{\mb{K}}{\bigcup_{i=1}^{n+1}p_i A p_i}'' = \blo{\mb{D}}{q_{n+1}\mc{H}}'' = \blo{\mb{D}}{q_{n+1}\mc{H}}.\]
\end{proof}

Now, we are ready to show matrix algebras are computably categorical.

We identify $M_n(\mb{D})$ with the collection of matrices in $M_{n+1}(\mb{D})$ which have all zeroes in their last column and row.
\begin{theorem}
  $M_n(\mb{D})$ is a computably categorical C*-algebra over $\mb{K}$.
\end{theorem}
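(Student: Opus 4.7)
The plan is to give a direct proof that reduces to the already-established computable categoricity of $\mb{D} = M_1(\mb{D})$: \autoref{quaternions cc} handles $\mb{D} = \mb{H}$, while \autoref{abelian fd rcsalgs are cc} and \autoref{thm: abelian fd csalgs are cc} cover the abelian cases $\mb{D} \in \{\mb{R}, \mb{C}\}$. Fix a computable presentation $A^+$ of $M_n(\mb{D})$ with $n \geq 2$; the goal is to construct a computable system of matrix units $(f_{rs}^z)_{1 \leq r,s \leq n,\, z \in Z}$ inside $A^+$. Once such matrix units are in hand, \autoref{cor: isomorphic to finite subset} shows $(A,(f_{rs}^z))$ is a computable presentation computably isomorphic to $A^+$ via the identity, while the assignment $e_{rs}^z \mapsto f_{rs}^z$ gives a computable isomorphism from the standard presentation onto $(A, (f_{rs}^z))$ because rational points map to rational points. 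Composing yields the desired computable isomorphism from the standard presentation to $A^+$.

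To build the matrix units, I would first pick a rational self-adjoint $x \in A^+$ with $n$ distinct eigenvalues; such an $x$ exists since self-adjoint elements of $M_n(\mb{D})$ with $n$ distinct real eigenvalues form an open dense subset of the self-adjoint part, and rational self-adjoint points are dense. By \autoref{lem: computable point computable spectrum}, each eigenvalue is a computable real; fix specific computable representatives $\lambda_1,\ldots,\lambda_n$. The spectral projections $r_i := \prod_{j \neq i}(x - \lambda_j)/(\lambda_i - \lambda_j)$ are then mutually orthogonal, computable, minimal, and sum to $1$. For each $j > 1$, the corner $r_1 A r_j$ is a nonzero one-dimensional $\mb{D}$-bimodule, so some rational $y_j \in A^+$ has $r_1 y_j r_j \neq 0$, and $v_{1j} := r_1 y_j r_j / \|r_1 y_j r_j\|$ is a computable partial isometry from $r_j$ to $r_1$ (the key check is that $(r_1 y_j r_j)^*(r_1 y_j r_j) \in r_j A r_j \cong \mb{D}$ is forced to equal $\|r_1 y_j r_j\|^2 r_j$). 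Setting $v_{11} := r_1$ and $v_{ij} := v_{1i}^* v_{1j}$ yields a computable family satisfying the $M_n(\mb{K})$ matrix unit relations. Finally, $r_1 A r_1 \cong \mb{D}$ inherits a computable presentation from \autoref{lem: unif comp substructure}, so computable categoricity of $\mb{D}$ gives a computable isomorphism from the standard $\mb{D}$ into $r_1 A^+ r_1$; taking $\alpha_z$ to be the image of $z \in Z$ and defining $f_{rs}^z := v_{r1} \alpha_z v_{1s}$ produces the desired matrix units.

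The main obstacle will be that \autoref{lem: computable point computable spectrum} only shows each element of $\sigma(x)$ is computable individually; it does not provide an algorithm to enumerate $\sigma(x)$. This is not fatal, since computable categoricity demands only the existence of a computable isomorphism for each given $A^+$, not a uniform procedure in $A^+$: for the fixed choice of $x$ I simply fix, once and for all, some computable representation of each of its $n$ eigenvalues, and every subsequent element constructed ($r_i$, $v_{ij}$, $\alpha_z$, $f_{rs}^z$) is then a genuine computable point of $A^+$.
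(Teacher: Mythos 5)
Your approach is correct in outline but genuinely different from the paper's. The paper proves the theorem by induction on $n$: it first builds a large supply of computable minimal projections via \autoref{mp span sa}, uses \autoref{rec ma} (whose proof runs through the double commutant theorem, \autoref{vndct}) to locate a computably included copy of $M_n(\mb{D})$ inside $M_{n+1}(\mb{D})$, invokes the inductive hypothesis to get computable matrix units there, and then extends them to a full system by a delicate argument involving \autoref{SN_cor} and two unitary adjustments (including a special case analysis when $\mb{D}=\mb{C}$, $\mb{K}=\mb{R}$). You instead work directly: one generic self-adjoint rational point $x$ with distinct eigenvalues, its spectral projections via interpolation and \autoref{lem: computable point computable spectrum}, partial isometries $v_{1j}$ linking the corners, and the base-case categoricity of $\mb{D}$ applied to the corner presentation $r_1A^{+}r_1$ from \autoref{lem: unif comp substructure} to supply the $\alpha_z$. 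This buys a shorter, non-inductive argument that avoids \autoref{rec ma}, Skolem--Noether, and the $\pm i$ case split entirely (you never need the comparison isomorphism to be implemented by a unitary, only the abstract map $e^z_{rs}\mapsto f^z_{rs}$, which is computable because rational points go to rational points, and whose composition with \autoref{cor: isomorphic to finite subset} finishes the proof exactly as you say). The checks you flag (positivity in $r_jAr_j\cong\mb{D}$ forcing $(r_1y_jr_j)^*(r_1y_jr_j)=\norm{r_1y_jr_j}^2r_j$, minimality giving $v_{1j}v_{1j}^*=r_1$, non-uniform choice of the eigenvalue representatives) all go through.

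One step needs a patch. As written, $r_i=\prod_{j\neq i}(x-\lambda_j)/(\lambda_i-\lambda_j)$ is a polynomial in $x$ with (generically) nonzero constant term, so it involves the unit $1$ of $M_n(\mb{D})$. In this framework rational and computable points come from $*$-polynomials with no constant term, and you may not assume $1$ is a computable point of $A^{+}$: that is exactly the content of the paper's final corollary, which is deduced \emph{from} computable categoricity, and the paper's own \autoref{mp span sa} is written precisely to handle the possibility that $1$ is not $A^\dagger$-computable. The fix is easy: choose your rational self-adjoint $x$ to be invertible with distinct eigenvalues (still a dense condition), and replace the Lagrange interpolation on $\{\lambda_1,\ldots,\lambda_n\}$ by interpolation on $\{0,\lambda_1,\ldots,\lambda_n\}$, e.g.\ $q_i(t)=\frac{t}{\lambda_i}\prod_{j\neq i}\frac{t-\lambda_j}{\lambda_i-\lambda_j}$, so that $q_i(0)=0$; then $r_i=q_i(x)$ is a limit of rational points (approximate the computable coefficients by rationals and use \autoref{moc}), hence a genuine computable point of $A^{+}$, and the rest of your construction proceeds unchanged.
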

\begin{proof}
  We proceed by induction on $n$.

  Observe $\mb{D}$ is computably categorical as a C*-algebra over $\mb{K}$ by \autoref{abelian fd rcsalgs are cc}, \autoref{thm: abelian fd csalgs are cc}, or \autoref{quaternions cc}.

  Now, assume $M_{n}(\mb{D})$ is computably categorical.
  For ease of reading, we denote $M_{n+1}(\mb{D})$ by $A$.
  Let $A^\dagger$ be a computable presentation of $A$.

  To make use of the inductive hypothesis, we determine a subalgebra $B$ isomorphic to $M_{n}(\mb{D})$ which is computably included in $A^{\dagger}$.
  By \autoref{mp span sa}, there is a finite set $M$ of computable minimal projections in $A^\dagger$ such that $\vspan{\mb{R}}{M}$ is the set of self-adjoint elements of $A$.
  We construct a sequence $(p_i)_{i=1}^{n+1}$ of minimal projections from $M$ so that $p_{k+1}$ does not commute with $q_k$ for $1 \leq k < n+1$, where $q_k = \bigvee_{i=1}^k p_i$.
  Choose $p_1 \in M$.
  Suppose we have constructed $(p_i)_{i=1}^k$ for some $k \leq n$.
  If $q_k$ commutes with every projection in $M$, then $q_k$ commutes with every self-adjoint element of $A$, so $q_{k} = dI$ for some nonzero $d \in \mb{D}$.
  However, $q_k$ has rank at most $k$, while $dI$ has rank $n+1$.
  Thus, we may choose $p_{k+1} \in M$ which does not commute with $q_k$.
  Let $B = \cgen{\mb{K}}{\bigcup_{i=1}^n p_iAp_i}$.
  Let $B^\dagger$ be the presentation of $B$ formed from the special points of $p_1A^{\dagger}p_{1},\ldots,p_nA^{\dagger}p_{n}$.
  Then $B^{\dagger}$ is computable and the inclusion from $B^{\dagger}$ into $A^{\dagger}$ is computable by \autoref{lem: unif comp substructure}.
  By \autoref{rec ma}, we have $\cgen{\mb{K}}{\bigcup_{i=1}^{n+1} p_iAp_i} \cong M_{n+1}(\mb{D})$ and $B \cong M_n(\mb{D})$.
  Hence $\cgen{\mb{K}}{\bigcup_{i=1}^{n+1} p_iAp_i} = A$ and $B = \blo{\mb{D}}{q_n\mb{D}^{n+1}}$.

  Let $Z$ be the standard orthonormal basis of $\mb{D}$ over $\mb{K}$.  
  By the inductive hypothesis, $M_n(\mb{D})$ is computably categorical over $\mb{K}$.
  In particular, there is a system of matrix units $(f^z_{kj} \st 1 \leq k,j \leq n, z \in Z)$, computable with respect to $B^\dagger$ so computable with respect to $A^\dagger$, such that $B = \cgen{\mb{K}}{\{f^z_{kj} \st 1 \leq k,j \leq n, z \in Z\}}$.

  Let $(e^z_{kj} \st 1 \leq k,j \leq n+1, z \in Z)$ be the standard system of matrix units for $M_{n+1}(\mb{D})$.
  We find a unitary $U$ in $M_{n+1}(\mb{D})$ such that $U^{*}f_{kj}^{z}U = e^{z}_{kj}$ for $1 \leq k,j \leq n$ and $z \in Z$.
  To that end, observe we can extend a $\mb{D}$-orthonormal basis on $(\sum_{k=1}^n f_{kk})(\mb{D}^{n+1})$ to one for $\mb{D}^{n+1}$, so there exists a unitary $V \in M_{n+1}(\mb{D})$ such that $V^*BV = M_n(\mb{D})$.
  Then $(V^*f^z_{kj}V \st 1 \leq k,j \leq n, z\in Z)$ forms a system of matrix units for $M_n(\mb{D})$.
  Before we find $U$, we show there exists a unitary $u$ in $M_{n}(\mb{D})$ such that $u^{*}V^{*}f^{z}_{kj}Vu = e^{z}_{kj}$ for $1 \leq k,j \leq n$ and $z \in Z$.
  There are two cases.
  If the center of $\mb{D}$ is $\mb{K}$, then $u$ exists by \autoref{SN_cor}.
  If the center of $\mb{D}$ is not $\mb{K}$, it must be that $\mb{D} = \mb{C}$, $\mb{K} = \mb{R}$, and $Z = \{1,i\}$.
  Then $\sum_{k=1}^{n} V^*f_{kk}^i V$ belongs to the center of $M_{n}(\mb{C})$, and ${(\sum_{k=1}^{n} V^*f_{kk}^i V)}^2 = -I$, so $\sum_{k=1}^{n} V^*f_{kk}^i V = \pm i$.
  By possibly replacing $f_{kj}^i$ by their negations for $1 \leq k,j \leq n$, we may assume $\sum_{k=1}^{n} f_{kk}^i = i$.
  Then there exists a unitary matrix $u \in M_n(\mb{C})$ such that $u^*V^*f_{kj}Vu = e_{kj}$ for $1 \leq k,j \leq n$, hence $u^*V^*f_{kj}^z Vu = e^z_{kj}$ for $1 \leq k,j \leq n$ and $z \in Z$.
  For both cases, we let $U = V(u + e_{(n+1)(n+1)})$.

  Using $U$, we show we can construct a system of matrix units for $A$ that is computable with respect to $A^{\dagger}$.
  As $U^*p_{n+1}U$ is a projection which does not commute with the identity in $M_n(\mb{D})$, there must exist $1 \leq \ell \leq n$ such that the $(n+1,\ell)$-th entry of $U^*p_{n+1}U$ is nonzero.
  For $z \in Z$, let $w_z = p_{n+1}f_{\ell\ell}^z - \sum_{k=1}^n f_{kk}p_{n+1}f_{\ell\ell}^z$.
  Then there exists a nonzero $d \in \mb{D}$ such that $U^*w_z U  = d e^z_{(n+1)\ell}$ for $z \in Z$.
  Note $\norm{d} = \norm{w_1}$ is computable.
  Now, we are ready to extend the system of matrix units.
  For $z \in Z$, let
  \begin{align*}
    f^z_{(n+1)k} &= \frac{1}{\norm{d}} w_z f_{\ell k} \text{ for } 1 \leq k \leq n,\\
    f^z_{k(n+1)} &= \frac{1}{\norm{d}} f^z_{k\ell}w_1^* \text{ for } 1 \leq k \leq n,\\
    \text{and } f^z_{(n+1)(n+1)} &= \frac{1}{\norm{d}^2}w_z w_1^*.
  \end{align*}

  Then for $z \in Z$, we observe
  \begin{align*}
         U^*f^z_{ij}U &= e^z_{kj} \text{ for } 1 \leq k,j \leq n,\\
         U^*f^z_{(n+1)k}U &= \frac{d}{\norm{d}}e^z_{(n+1)k} \text{ for } 1 \leq k \leq n,\\
         U^*f^z_{k(n+1)}U &= e^z_{k(n+1)}\frac{d^*}{\norm{d}} \text{ for } 1 \leq k \leq n,\\
         \text{and } U^*f^z_{(n+1)(n+1)}U &= \frac{d}{\norm{d}}e^z_{(n+1)(n+1)}\frac{d^*}{\norm{d}}.
  \end{align*}

  We only need to tweak our unitary conjugation.
  Let $x_{1},\ldots,x_{n+1}$ be the standard $\mb{D}$-orthonormal basis for $\mb{D}^{n+1}$.
  If $W \in M_{n+1}(\mb{D})$ is the unitary matrix which sends $x_{i}$ to itself for $1 \leq i \leq n$ and sends $x_{n+1}$ to $x_{n+1}\frac{d}{\norm{d}}$, then ${(UW)}^*f^z_{kj}UW = e^z_{kj}$ for $1 \leq k,j \leq n+1$ and $z \in Z$.
    Note each member of $(f^z_{kj} \st 1 \leq k,j \leq n+1, z \in Z)$ is computable with respect to $A^\dagger$, so by \autoref{cor: isomorphic to finite subset}, $(A, (f^{z}_{kj} \st 1 \leq k,j \leq n+1, z \in Z))$ is computably isomorphic to $A^{\dagger}$ via the identity map.
    Thus conjugation by $UW$ gives a computable isomorphism from $A^\dagger$ to the standard presentation $M_{n+1}(\mb{D})$.
\end{proof}

\begin{corollary}
  Every finite-dimensional C*-algebra over $\mb{K}$ is computably categorical as a C*-algebra over $\mb{K}$.
\end{corollary}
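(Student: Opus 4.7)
The plan is to assemble the corollary from three results established earlier in this section. First, by \autoref{structure of fd ccsalgs} in the complex case and \autoref{structure of fd rcsalgs} in the real case, every finite-dimensional C*-algebra over $\mb{K}$ is isomorphic to a direct sum $\bigoplus_{i=1}^{k} M_{n_i}(\mb{D}_i)$ for some $n_1,\ldots,n_k \in \mb{N}$ and some choice of division rings $\mb{D}_i \in \{\mb{R}, \mb{C}, \mb{H}\}$ (with $\mb{D}_i = \mb{C}$ forced in the case $\mb{K} = \mb{C}$). Since computable categoricity depends only on the isomorphism type of the C*-algebra, it suffices to establish the statement for each such direct sum.

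Second, I would invoke the immediately preceding theorem, which asserts that every matrix algebra $M_n(\mb{D})$ is computably categorical as a C*-algebra over $\mb{K}$, to conclude that every individual summand $M_{n_i}(\mb{D}_i)$ is computably categorical. The conclusion then follows by induction on $k$ using \autoref{thm: direct sum of ccs is cc}: the case $k=1$ is handled by the matrix algebra theorem, and for the inductive step one writes
\[
\bigoplus_{i=1}^{k+1} M_{n_i}(\mb{D}_i) \cong \Big(\bigoplus_{i=1}^{k} M_{n_i}(\mb{D}_i)\Big) \oplus M_{n_{k+1}}(\mb{D}_{k+1}),
\]
then applies the induction hypothesis to the first factor and combines it with \autoref{thm: direct sum of ccs is cc} for these two summands. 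No genuine obstacle arises; the corollary is simply the packaging of the structural classification of finite-dimensional C*-algebras together with the two computable categoricity results obtained in this section.
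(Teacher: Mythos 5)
Your proposal is correct and matches the paper's intended argument: the paper explicitly notes before the matrix algebra theorem that, by \autoref{structure of fd ccsalgs}, \autoref{structure of fd rcsalgs}, and \autoref{thm: direct sum of ccs is cc}, it suffices to treat the matrix algebras $M_n(\mb{D})$, which is exactly the reduction you perform, followed by the same induction on the number of summands.
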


Since the identity is computable with respect to the standard presentation of any finite-dimensional C*-algebra, and automorphisms preserve the identity, we also have the following.
\begin{corollary}
  Let $A$ be a finite-dimensional C*-algebra over $\mb{K}$.
  The identity is computable with respect to any computable presentation of $A$, and $A$ is computably categorical as a unital C*-algebra over $\mb{K}$.
\end{corollary}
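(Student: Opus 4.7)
The plan is to combine the just-established computable categoricity of finite-dimensional C*-algebras with the observation that the identity is a rational point of the standard presentation, and that $*$-isomorphisms between unital C*-algebras automatically preserve units.

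For the first claim, I would begin by verifying that $1$ is computable with respect to the standard presentation. By \autoref{structure of fd ccsalgs} and \autoref{structure of fd rcsalgs}, $A \cong \bigoplus_{i=1}^{k} M_{n_i}(\mb{D}_i)$. The standard presentation of this direct sum (built via the $A^{\dagger} \oplus B^{\dagger}$ construction used in \autoref{thm: direct sum of ccs is cc}) has as special points the standard matrix units of each summand, so the identity $1 = \sum_{i=1}^{k} \sum_{r=1}^{n_i} e_{rr}^{(i)}$ is a rational point and therefore computable. Now given an arbitrary computable presentation $A^{\dagger}$ of $A$, the preceding corollary furnishes a computable isomorphism $\varphi$ from the standard presentation to $A^{\dagger}$. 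Since computable $*$-homomorphisms send computable points to computable points, and since any $*$-isomorphism between unital C*-algebras must send unit to unit (because $\varphi(1)$ is a nonzero projection that acts as a unit on the image $\varphi(A) = A$), we conclude that $1 = \varphi(1)$ is computable in $A^{\dagger}$.

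For the second claim, I would take two computable presentations $A^{\dagger}$ and $A^{+}$ of $A$ viewed as a unital C*-algebra. The first part ensures that $1$ is computable in both presentations, so nothing is lost by working in the unital category. Applying the previously established computable categoricity of $A$ as a C*-algebra over $\mb{K}$ yields a computable isomorphism $\psi : A^{\dagger} \to A^{+}$. Since $\psi$ is a $*$-isomorphism of unital C*-algebras, the same argument as above forces $\psi(1) = 1$, so $\psi$ is automatically a computable isomorphism in the unital setting as well.

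There is essentially no obstacle here: once one has the non-unital computable categoricity and the structural fact that the identity lies in the rational points of the standard presentation, both conclusions are immediate. The only mild point to attend to is a clear reading of what a "unital computable presentation" means, namely a computable presentation of $A$ together with the understanding that $1$ is singled out (for which the first part ensures no extra computability hypothesis is needed).
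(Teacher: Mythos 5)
Your proposal is correct and follows essentially the same route as the paper: the identity is a rational (hence computable) point of the standard presentation, computable categoricity transports it to any computable presentation via a computable isomorphism, and since any $*$-isomorphism of unital C*-algebras preserves the unit, the same isomorphisms witness computable categoricity in the unital category. No gaps.
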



\section*{Acknowledgements}
We would like to thank Isaac Goldbring and Tim McNicholl for their truly invaluable feedback on earlier drafts of the manuscript. We would also like to thank the anonymous referee from this journal who suggested many improvements.

\printbibliography[heading=bibintoc,title={References}]
\end{document}